\newif\ifconfversion
\newcommand{\mysubsection}[1]{\vspace{0pt}\subsection{#1}}
\newcommand{\mysection}[1]{\vspace{0pt}\section{#1}}
\newtheorem{theorem}{Theorem}[section]
\newtheorem{lemma}[theorem]{Lemma}
\newtheorem{corollary}[theorem]{Corollary}
\newtheorem{definition}[theorem]{Definition}
\newtheorem{proposition}[theorem]{Proposition}
\newtheorem{observation}[theorem]{Observation}
\theoremstyle{definition}
\newtheorem{example}[theorem]{Example}
\numberwithin{equation}{section}
\newcommand\C{{\mathbb{C}}}
\newcommand\F{{\mathbb{F}}}
\newcommand\Z{{\mathbb{Z}}}
\newcommand\N{{\mathbb{N}}}
\DeclareMathOperator{\GL}{GL}
\newcommand{\D}{\mathcal{D}}
\newcommand{\R}{\mathcal{R}}
\newcommand{\m}{\mathbf{m}}
\newcommand{\mult}[1]{M_{#1}}
\DeclareMathOperator{\Var}{Var}
\newcommand{\abs}[1]{\left\lvert#1\right\rvert}
\newcommand{\onto}{\twoheadrightarrow}
\DeclareMathOperator{\Rank}{rank}
\DeclareMathOperator{\Span}{span}
\newcommand{\coloneq}{\mathrel{\mathop:}\mkern-1.2mu=}
\DeclareMathOperator{\Aut}{Aut}
\DeclareMathOperator{\slicerank}{slice-rank}
\DeclareMathOperator{\frank}{flat-rank}
\newcommand{\x}{\mathbf{x}}
\newcommand{\E}{\mathbb{E}}
\DeclareMathOperator{\codim}{codim}
\newcommand{\randm}{\widetilde{m}}
\newcommand{\poly}{\mathrm{poly}}
\renewcommand{\epsilon}{\varepsilon}
\begin{document}

\title[Which groups are amenable to proving exponent two for matrix multiplication?]
{Which groups are amenable to proving \\ exponent two for matrix
multiplication?}
\author{Jonah Blasiak}
\address{\normalfont J.B.: Department of Mathematics, Drexel University, Philadelphia, PA, USA.
\texttt{jblasiak@gmail.com}. Supported by NSF grant DMS-1600391. All authors
also thank AIM for hosting a SQuaRE, during which this work was developed. }

\author{Thomas Church}
\address{\normalfont T.C.: Department of Mathematics, Stanford University, Palo Alto, CA, USA.
\texttt{church@math.stanford.edu}. Supported by NSF grant DMS-1350138, the
Alfred P.\ Sloan Foundation, and the Frederick E.\ Terman Fellowship.}

\author{Henry Cohn}
\address{\normalfont H.C.: Microsoft Research New England, One Memorial Drive, Cambridge, MA, USA.
\texttt{cohn@microsoft.com}. }

\author{Joshua A. Grochow}
\address{\normalfont J.A.G.: Departments of Computer Science and Mathematics,
University of Colorado, Boulder, CO, USA. \texttt{jgrochow@colorado.edu}.
Supported by a Santa Fe Institute Omidyar Fellowship and NSF grant DMS-1750319.}
\author{Chris Umans}
\address{\normalfont C.U.: Department of Computing and Mathematical Sciences,
California Institute of Technology, Pasadena, CA, USA. \texttt{umans@cs.caltech.edu}.
Supported by NSF grant CCF-1423544 and a Simons Foundation Investigator grant.}

\date{}
\setcounter{page}{0}

\begin{abstract}
The Cohn--Umans group-theoretic approach to matrix multiplication suggests
embedding matrix multiplication into group algebra multiplication, and
bounding $\omega$ in terms of the representation theory of the host group.
This framework is general enough to capture the best known upper bounds on
$\omega$ and is conjectured to be powerful enough to prove $\omega = 2$,
although finding a suitable group and constructing such an embedding has
remained elusive. Recently it was shown, by a generalization of the proof
of the Cap Set Conjecture, that \emph{abelian} groups of {\em bounded
exponent} cannot prove $\omega = 2$ in this framework, which ruled out a
family of potential constructions in the literature.

In this paper we study \emph{nonabelian} groups as potential hosts for an
embedding. We prove two main results:
\begin{enumerate}
\item We show that a large class of nonabelian groups---nilpotent groups
    of bounded exponent satisfying a mild additional condition---cannot
    prove $\omega = 2$ in this framework. We do this by showing that the
    shrinkage rate of powers of the augmentation ideal is similar to the
    shrinkage rate of the number of functions over $(\Z/p\Z)^n$ that are
    degree $d$ polynomials; our proof technique can be seen as a
    generalization of the polynomial method used to resolve the Cap Set
    Conjecture.
\item We show that symmetric groups $S_n$ cannot prove nontrivial bounds
    on $\omega$ when the embedding is via three Young
    subgroups---subgroups of the form $S_{k_1} \times S_{k_2} \times
    \dotsb \times S_{k_\ell}$---which is a natural strategy that includes
    all known constructions in $S_n$.
\end{enumerate}
By developing techniques for negative results in this paper, we hope to
catalyze a fruitful interplay between the search for constructions proving
bounds on $\omega$ and methods for ruling them out.
\end{abstract}

\maketitle

\thispagestyle{empty}
\newpage

\mysection{Introduction}

One of the most prominent open problems in algorithms is to determine the
exponent, $\omega$, of matrix multiplication, the smallest real number such
that $n \times n$ matrices can be multiplied in $O(n^{\omega+\epsilon})$
operations for all $\epsilon > 0$. The exponent $\omega$ controls the
algorithmic complexity of nearly all algorithmic linear algebra problems, and
the best upper bounds on the complexity of numerous other problems seemingly
unrelated to matrix multiplication are expressed in terms of $\omega$. It is
a folklore conjecture that $\omega = 2$, and the quest to prove this has
extended nearly 50 years, sparked by Strassen's 1969 discovery that $\omega <
2.81$. The current best upper bound is $\omega < 2.372864$, due to Le Gall in
2014 \cite{legall}, and building on \cite{CW, Stothers, williams}.

In 2003, Cohn and Umans \cite{CU03} proposed a method for proving upper
bounds on $\omega$ via reduction to group-algebra multiplication. The recent
computer-assisted arguments \cite{Stothers, williams, legall} in the style of
Coppersmith--Winograd \cite{CW} can all be captured by the group-theoretic
approach \cite{CKSU, fuKleinberg}. Indeed, these constructions all can be
viewed as giving families of subsets satisfying the simultaneous triple
product property (STPP) \cite{CKSU} (recalled as Definition~\ref{def:STPP}
below) in \emph{abelian groups of bounded exponent}. This family of groups
arises because the constructions typically work in groups like $(\Z/m\Z)^n$ where
$n \to \infty$ and $m$ can be optimized over, and the optimization results in
$m$ being fixed as $n \to \infty$. Ambainis, Filmus, and Le Gall \cite{AFLG15} showed that the
Coppersmith--Winograd family of constructions could not yield a bound on
$\omega$ better than $2.3078$. The authors, together with Naslund and Sawin,
extended the recent resolution of the Cap Set Conjecture \cite{EG, CLP} to
show a result that is philosophically similar to \cite{AFLG15} (but
technically incomparable): STPP constructions in abelian groups of bounded
exponent cannot show $\omega=2$ \cite{BCCGNSU}.

However, the group-theoretic approach has the advantage of being extremely
general. Part of the allure of this approach is that there is rich array of
constructions to try---especially in nonabelian groups---that make contact
with well-studied topics in mathematics. For example, constructions building
on the ``triangle construction'' of \cite{CU03} turn on the combinatorics of
permutations, while constructions building on the ``Lie pseudo-exponent two''
construction of \cite{CU03} depend on algebraic geometry in finite
characteristic. One can also try to use knowledge from representation theory
to construct families of groups tailored so that their representations are
small, while still supporting matrix multiplication via the Cohn--Umans
embedding. Beyond the elementary fact shown in \cite{CU03} that abelian
groups cannot prove non-trivial bounds on $\omega$ via embedding a {\em
single} matrix-multiplication instance, there has been very little to guide
the search for a construction in this broad approach. In this paper, our
primary purpose is to narrow the search for which nonabelian groups might be
fruitful in this endeavor. As the group-theoretic approach is very natural,
and captures all the known best algorithms for matrix multiplication, we view
such negative results (ruling out families of groups) in the same spirit as
results concerning LPs or SDPs that rule out certain classes of algorithms.

We make the following progress in understanding which groups are amenable to
proving $\omega = 2$ via the group-theoretic approach. We prove two main
results:
\begin{enumerate}
\item We prove upper bounds on STPP constructions in finite nilpotent
    groups $G$ of bounded exponent (and satisfying a mild additional
    condition; see Section \ref{sec:pgroups}) of the form $|G|^{1 -
    \epsilon}$ for a constant $\epsilon > 0$ (Theorem~\ref{thm:nilpotent}).
    This rules out proving $\omega = 2$ in such groups. The ``bounded
    exponent'' condition is essentially inherited from the abelian case,
    where one cannot rule out large cyclic groups or powers thereof by
    our methods (see Section~\ref{sec:techniques} for more details of the
    methods).

  In the taxonomy of finite groups, nilpotent groups are ``just short of''
  solvable groups.  Within the scope of our technique, solvable groups are
  perhaps the most fundamental and natural class one might reasonably hope
  to rule out.  Recall that solvable groups are built from abelian groups
  as follows: all abelian groups are solvable, and if $N$ is a normal
  subgroup of $G$ and $N$ and $G/N$ are solvable then so is $G$. We
  highlight extending these upper bounds to solvable groups as an important
  open problem.

\item We prove that no three Young subgroups can prove $\omega = 2$ in the
    symmetric or alternating groups. In the taxonomy of finite groups,
    these non-abelian simple or near-simple groups are in some sense ``at
    the other end of the spectrum'' from abelian, nilpotent, or solvable
    groups. While our result does not fully rule out proving $\omega = 2$
    in such groups in the way that upper bounds on the size of arbitrary
    STPP constructions would, it does rule out proving it via perhaps the
    most natural type of construction.
\end{enumerate}

Altogether, these results together with \cite{BCCGNSU} begin to narrow the
choices for proving $\omega = 2$ in intriguing and useful ways. While it is
certainly not a foregone conclusion that the group-theoretic approach is
capable of proving $\omega = 2$ (or even that $\omega = 2$ in the first
place!), there seems to now be a useful interplay between positive and
negative results that constitutes a mathematically interesting research
program with a chance for a significant payoff.

\subsection{Techniques} \label{sec:techniques}
Our main technique is the \emph{slice rank} method described in
\cite{BCCGNSU}. However, that method is an application of the polynomial
method, and it is initially not clear how to extend it to the nonabelian
setting. Instead of considering polynomials graded by degree (as in the
abelian setting), we show that the powers of the augmentation ideal of a
nilpotent group are a suitable nonabelian replacement for the grading by
degree. If $I$ is the augmentation ideal, then the replacements for
``polynomials of degree $k$'' is the space $I^k / I^{k+1}$. We identify the
\emph{shrinkage rate} of these spaces (as $k$ increases) as a key quantity
for bounding the slice rank, and hence the size of STPP constructions, in
such groups. We show a concentration inequality for these dimensions strong
enough to give our main theorem on nilpotent groups,
Theorem~\ref{thm:nilpotent}.

It is possible that this behavior occurs in powers of the augmentation ideal
in other groups as well; this suggests a concrete strategy for proving strong
upper bounds for groups beyond the ones we have considered.

Our result for symmetric groups is a fairly delicate induction. It is
intriguing that altering the setup in certain small ways---for example by
considering the direct product of {\em two} symmetric groups---breaks the
argument. Do any of these alterations suggest ways to obtain {\em positive
results}? We discuss these questions in Section \ref{sec:symmetric}.

\mysubsection{Related work} The techniques and results of this paper have
significant overlap with those of Petrov \cite{petrov}. He also uses powers
of the augmentation ideal and proves a result about products of subspaces in
a group ring (similar to our Proposition \ref{prop:codim}) to obtain his
upper bounds. Indeed, in retrospect, our proof specializes in the case of the
unitriangular group (Example~\ref{ex:UT}) to precisely that given by Petrov
in \cite[Section~7]{petrov}. However, by realizing the argument in terms of
powers of the augmentation ideal and relating this to the $p$-lower central
series (see Proposition~\ref{prop:augVSplcs}), we are able to obtain general
results for all $p$-groups. Moreover, by putting the argument
in the context of slice rank, we are able to show that our bounds in general
are tight (see Appendix~\ref{sec:tight}).

As a result of this more general approach, we identify two natural structural
properties of $p$-groups that allow us to rule out showing $\omega=2$ in
groups satisfying these properties. One of these is bounded nilpotency
class---a standard group-theoretic notion---but the other, about the growth
rate of ``$p$-degrees'' (see Definition~\ref{def:pdegrees}), appears to be
new and may be interesting in its own right.

We also show how to extend slice rank upper bounds from a normal subgroup to
its parent group (Lemma~\ref{lem:normal}), which is a very general tool. In
this paper, we use this tool to extend from $p$-groups to general nilpotent
groups.

Sawin \cite{Sawin} also gives a general result. He shows that for any
nontrivial group  $G$, the size of a multiplicative matching in  $G^n$ is at
most $\delta^n |G|^n$, where  $\delta < 1$ is a constant that depends on $G$
but not  $n$. However, this bound is never of the form $|G|^{1 - c}$, which
is what is needed to rule out proving $\omega=2$ in a family of groups
(unless $|G|=O(1)$ and the family is $\{G^n\}$). In contrast, our results
rule out proving $\omega=2$ in many natural families of groups, including
ones with known non-trivial constructions.

Even apart from the connection with matrix multiplication, the question of
extremal multiplicative matchings and related objects in groups is
interesting in its own right, and has been the subject of a number of recent
works \cite{CLP, EG, Sawin, naslundSawin, KSS, Norin, Pebody, foxLovasz,
greenSarkozy, ellenbergSumset, geShangguan, kimOun, ASU, BCCGNSU, A,
dvirEdelman}.

\mysection{Preliminaries}

\mysubsection{Multiplicative matchings}

The following definition coincides with what were called {\em tricolored
sum-free sets} in \cite{BCCGNSU}. While the ``sum-free'' terminology works
well in abelian groups, we find the ``matching'' terminology adopted by
Aaronson \cite{A} and Sawin \cite{Sawin} clearer, especially when the
underlying group is non-abelian as it frequently is in this paper.

\begin{definition}[{Multiplicative matching \cite[Def.~3.1]{BCCGNSU}}]
A {\em multiplicative matching} in a group $G$ is given by three sequences $(s_1, \dotsc, s_n), (t_1, \dotsc, t_n), (u_1, \dotsc, u_n)$ of
elements of $G$
such that
\[
s_it_ju_k = 1 \Longleftrightarrow i = j =k.
\]
The {\em cardinality} of this multiplicative matching is $n$.
\end{definition}

\mysubsection{The group-theoretic approach}

The group-theoretic approach to bounding the exponent of matrix
multiplication amounts to reducing matrix multiplication to multiplication in
the {\em group algebra} $\C[G]$ for finite groups $G$. The reduction is
carried out via three subsets of $G$ that satisfy the {\em triple product
property}:

\begin{definition}[Triple Product Property (TPP)]
Three subsets $S, T, U$ of a finite group $G$ satisfy the {\em triple product
  property} if
\[stu = 1 \Longleftrightarrow s = t = u = 1\]
for all $s \in Q(S), t \in Q(T), u \in Q(U)$. Here $Q(S) = \{xy^{-1}:
x, y \in S\}$ is the {\em quotient set} of $S$.
\end{definition}

Given $S, T, U \subseteq G$ that satisfy the triple product property, one can
reduce $|S| \times |T|$ by $|T| \times |U|$ matrix multiplication to
$\C[G]$-multiplication. A key fact is that $\C[G] \cong M_{d_1}(\C) \oplus
\dotsb \oplus M_{d_k}(\C)$, where the $d_i$ are the dimensions of the
irreducible representations of $G$ (hence $\sum_i d_i^2 = |G|$). The
reduction thus gives a scheme for multiplying matrices by performing a number
of (hopefully) smaller matrix multiplications, yielding a recurrence that
proves upper bounds on $\omega$:

\begin{theorem}[\cite{CU03}] \label{thm:TPP}
If $S, T, U \subseteq G$ satisfy the triple product property, then
\begin{equation} \label{eq:TPP}
(|S| |T| |U|)^{\omega/3} \leq \sum_i d_i^{\omega},
\end{equation}
where the $d_i$ are the dimensions of the irreducible representations
of $G$.
\end{theorem}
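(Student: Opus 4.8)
The plan is to realize matrix multiplication inside the group algebra $\C[G]$, to bound the tensor rank of $\C[G]$ via its Wedderburn decomposition, and then to upgrade the resulting inequality between tensor ranks to the stated inequality involving $\omega$ by a tensor-power argument.

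First I would set up the embedding. Index the rows of the first matrix by $S$, its columns (and the rows of the second matrix) by $T$, and the columns of the second matrix by $U$; to a matrix $A=(a_{s,t})_{s\in S,\,t\in T}$ associate $\widehat A=\sum_{s,t}a_{s,t}\,s^{-1}t\in\C[G]$, and to $B=(b_{t,u})_{t\in T,\,u\in U}$ associate $\widehat B=\sum_{t,u}b_{t,u}\,t^{-1}u$. Then, renaming the dummy index of $\widehat B$, one has $\widehat A\,\widehat B=\sum_{s,t,t',u}a_{s,t}\,b_{t',u}\,s^{-1}\bigl(t(t')^{-1}\bigr)u$, and the key computation is that for $\bar s\in S$ and $\bar u\in U$ the coefficient of $\bar s^{-1}\bar u$ is exactly $(AB)_{\bar s,\bar u}$: a term contributes precisely when $(\bar s\,s^{-1})\bigl(t(t')^{-1}\bigr)(u\,\bar u^{-1})=1$ with $\bar s\,s^{-1}\in Q(S)$, $t(t')^{-1}\in Q(T)$, and $u\,\bar u^{-1}\in Q(U)$, so the triple product property forces $s=\bar s$, $t=t'$, $u=\bar u$, leaving $\sum_{t}a_{\bar s,t}b_{t,\bar u}$. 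The same computation shows that $(s,t)\mapsto s^{-1}t$, $(t,u)\mapsto t^{-1}u$, and $(s,u)\mapsto s^{-1}u$ are injective, so the entries of $A$ and $B$ occupy distinct basis vectors of $\C[G]$ and each entry of $AB$ is recovered from a well-defined coordinate. Hence the matrix multiplication tensor $\langle|S|,|T|,|U|\rangle$ is a restriction of the multiplication tensor of $\C[G]$, and in particular $R(\langle|S|,|T|,|U|\rangle)\le R(\C[G])$, where $R(\cdot)$ denotes tensor rank.

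Next I would use that $\C[G]\cong M_{d_1}(\C)\oplus\dotsb\oplus M_{d_k}(\C)$ as algebras, so its multiplication tensor is the direct sum of the matrix multiplication tensors $\langle d_i,d_i,d_i\rangle$ and $R(\C[G])\le\sum_i R(\langle d_i,d_i,d_i\rangle)$ by subadditivity of tensor rank under direct sums. To remove the slack here---$R(\langle d,d,d\rangle)$ need not be bounded by $d^\omega$ for a fixed $d$, only as $d\to\infty$---I would pass to tensor powers. The triple product property is stable under direct products, so $(S^N,T^N,U^N)$ satisfies it in $G^N$; since $\C[G^N]\cong\C[G]^{\otimes N}\cong\bigoplus_{i_1,\dotsc,i_N}M_{d_{i_1}\dotsm d_{i_N}}(\C)$, rerunning the previous two steps for $G^N$ yields
\[
\bigl(|S|\,|T|\,|U|\bigr)^{N\omega/3}\;\le\;R\bigl(\langle|S|^N,|T|^N,|U|^N\rangle\bigr)\;\le\;\sum_{i_1,\dotsc,i_N}R\bigl(\langle d_{i_1}\dotsm d_{i_N},\;d_{i_1}\dotsm d_{i_N},\;d_{i_1}\dotsm d_{i_N}\rangle\bigr),
\]
where the left inequality is the standard bound $(abc)^{\omega/3}\le R(\langle a,b,c\rangle)$ (which follows from $\langle a,b,c\rangle\otimes\langle b,c,a\rangle\otimes\langle c,a,b\rangle\cong\langle abc,abc,abc\rangle$, the invariance of $R(\langle\cdot,\cdot,\cdot\rangle)$ under permuting the three indices, submultiplicativity of $R$ under $\otimes$, and $R(\langle n,n,n\rangle)\ge n^\omega$). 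For the right-hand side, the definition of $\omega$ provides, for each $\epsilon>0$, a constant $K_\epsilon$ with $R(\langle n,n,n\rangle)\le K_\epsilon\,n^{\omega+\epsilon}$ for all $n$, so the sum is at most $K_\epsilon\bigl(\sum_i d_i^{\,\omega+\epsilon}\bigr)^N$. Taking $N$th roots, letting $N\to\infty$, and then $\epsilon\to0$ would give $\bigl(|S|\,|T|\,|U|\bigr)^{\omega/3}\le\sum_i d_i^{\,\omega}$, as desired.

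I expect the only step requiring genuine attention is the collision analysis in the second paragraph: this is the sole place the hypothesis is used, and it is precisely there that one sees why the triple product property must be phrased in terms of the quotient sets $Q(S),Q(T),Q(U)$ rather than $S,T,U$ themselves. Everything else is bookkeeping with standard properties of tensor rank and of $\omega$ (subadditivity under $\oplus$, submultiplicativity under $\otimes$, permutation-symmetry of matrix multiplication tensors, and $R(\langle n,n,n\rangle)=n^{\omega+o(1)}$), together with the routine tensor-power trick that clears the constant $K_\epsilon$ and lets $\epsilon\to0$.
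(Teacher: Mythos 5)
The paper gives no proof of Theorem~\ref{thm:TPP}, citing \cite{CU03}, and your argument is correct and is essentially the standard Cohn--Umans proof: the quotient-set collision analysis showing $\langle|S|,|T|,|U|\rangle$ restricts to the multiplication tensor of $\C[G]$, the Wedderburn decomposition bounding that tensor by $\sum_i R(\langle d_i,d_i,d_i\rangle)$, and the tensor-power trick over $G^N$ to absorb the constant $K_\epsilon$ and let $\epsilon\to 0$. No gaps: the TPP is correctly invoked exactly where needed, and the remaining steps use only standard properties of tensor rank and of $\omega$.
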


One hopes for $S, T, U$ to be large subsets, and the $d_i$ to be small, so
that this equation forces $\omega$ to be small. In this paper we identify a
useful {\em necessary} condition for a Triple Product Property construction
to prove nontrivial bounds on $\omega$ (i.e., $\omega < 3$), which we use in
Section~\ref{sec:symmetric}:

\begin{proposition}
\label{prop:nec-TPP} If $S, T, U \subseteq G$ satisfy the \ifconfversion TPP
\else Triple Product Property \fi and \ifconfversion
$\frac{|G|}{(|S||T||U|)^{2/3}} \ge \mbox{\# conjugacy
    classes of $G$}$
\else
\[\frac{|G|}{(|S||T||U|)^{2/3}} \ge \mbox{\# conjugacy
    classes of $G$},\]
    \fi 
then \eqref{eq:TPP} is satisfied by all $\omega > 0$ (and thus cannot even prove $\omega < 3$).
\end{proposition}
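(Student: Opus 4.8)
The plan is to strip away the triple product property entirely and reduce to an elementary inequality about the multiset of irreducible dimensions. Write $N = |S|\,|T|\,|U|$, let $k$ be the number of conjugacy classes of $G$ --- equivalently, the number of complex irreducible representations of $G$ --- and let $d_1,\dots,d_k$ be their dimensions, so $\sum_{i=1}^k d_i^2 = |G|$. The hypothesis $\frac{|G|}{N^{2/3}} \ge k$ is precisely the statement $N^{2/3} \le |G|/k$; raising both sides to the power $\omega/2$ yields $N^{\omega/3} \le (|G|/k)^{\omega/2}$ for every $\omega > 0$. So by Theorem~\ref{thm:TPP} it is enough to prove that $(|G|/k)^{\omega/2} \le \sum_{i=1}^k d_i^\omega$ holds for all $\omega > 0$, and this now depends only on the constraints $\sum_i d_i^2 = |G|$ and $k \ge 1$, with no further reference to $S,T,U$.

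I would prove this last inequality by splitting on whether $\omega \ge 2$. When $\omega \ge 2$ the map $x \mapsto x^{\omega/2}$ is convex on $[0,\infty)$, so Jensen's inequality (equivalently, the power-mean inequality) applied to $x_i = d_i^2$ gives $\frac{1}{k}\sum_i d_i^\omega = \frac{1}{k}\sum_i (d_i^2)^{\omega/2} \ge \bigl(\frac{1}{k}\sum_i d_i^2\bigr)^{\omega/2} = (|G|/k)^{\omega/2}$; since $k \ge 1$ we may discard the factor $1/k$ and conclude $\sum_i d_i^\omega \ge (|G|/k)^{\omega/2}$.

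When $0 < \omega \le 2$ the power-mean inequality points the wrong way, so here I would instead invoke the elementary fact that for any exponent $t \in (0,1]$ and nonnegative reals $a_i$ one has $\sum_i a_i^{t} \ge \bigl(\sum_i a_i\bigr)^{t}$. (After dividing by $\bigl(\sum_i a_i\bigr)^{t}$ this reduces to $\sum_i b_i^{t} \ge 1$ with $b_i \in [0,1]$ and $\sum_i b_i = 1$, which holds termwise since $b_i^{t} \ge b_i$.) Applying this with $t = \omega/2$ and $a_i = d_i^2$ gives $\sum_i d_i^\omega \ge \bigl(\sum_i d_i^2\bigr)^{\omega/2} = |G|^{\omega/2} \ge (|G|/k)^{\omega/2}$, using $k \ge 1$ once more. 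Combining the two ranges establishes $(|G|/k)^{\omega/2} \le \sum_i d_i^\omega$ for all $\omega > 0$, which is exactly what was needed; note that integrality of the $d_i$ is never used, only $\sum_i d_i^2 = |G|$.

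I do not expect a genuine obstacle in this argument; the only point requiring any care is the reversal of the power-mean inequality at $\omega = 2$, which is why the proof naturally falls into the two cases $\omega \ge 2$ and $\omega \le 2$ rather than following from a single convexity estimate.
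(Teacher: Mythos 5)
Your argument is correct, and it follows the paper's reduction exactly up to the final step: both proofs use that the number of irreducibles equals the number $k$ of conjugacy classes, that $\sum_i d_i^2 = |G|$, and that the hypothesis rewrites as $(|S||T||U|)^{2/3} \le |G|/k$, so that everything comes down to showing $(|G|/k)^{\omega/2} \le \sum_i d_i^\omega$ for all $\omega > 0$. Where you diverge is in how that last inequality is established. The paper observes that the maximum is at least the average, $d_{\max}^2 \ge |G|/k$, so $(|G|/k)^{\omega/2} \le d_{\max}^\omega \le \sum_i d_i^\omega$; this single term-picking step works uniformly in $\omega$ with no case analysis. You instead bound the full sum via two power-mean inequalities, splitting at $\omega = 2$ because convexity of $x \mapsto x^{\omega/2}$ reverses there. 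Both are valid; your version is slightly longer but makes explicit that the statement is a purely analytic fact about any nonnegative reals with $\sum_i d_i^2 = |G|$ (integrality of the $d_i$ is irrelevant in the paper's proof too), while the paper's max-versus-average trick is the more economical route and is the standard way this kind of bound is phrased in the group-theoretic approach, where $d_{\max}$ is the natural obstruction.
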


\ifconfversion
See the full version for the proof (numbers in the two versions are identical).
\else
\begin{proof}
A well-known fact is that the number of inequivalent irreducible
representations of $G$ is equal to the number $k$ of conjugacy classes of
$G$. Since $\sum_i d_i^2 =|G|$ we find that $d^2_{\max} \ge |G|/k$ (indeed
the {\em average} of $d_i^2$ is at least this large). By assumption, we have
\[(|S||T||U|)^{2/3} \le |G|/k \le d_{\max}^2\]
Exponentiating both sides by $\omega/2$ gives $(|S||T||U|)^{\omega/3} \le
d_{\max}^\omega$. Since $d_{\max}^\omega\le \sum_i d_i^\omega$ we find
that \eqref{eq:TPP} is satisfied by any positive $\omega$, as
claimed.
\end{proof}
\fi 

It {\em is} possible to prove good upper bounds on $\omega$ in this
framework, and this was first done in \cite{CKSU}, via wreath product groups
$G^n \rtimes S_n$. It turns out that the apportionment to subsets $S, T, U$
in all of these constructions can be described via several ``simultaneous''
triple product property constructions within $G$.

\begin{definition}[Simultaneous Triple Product Property (STPP)] \label{def:STPP}
Triples of subset $S_i, T_i, U_i$ of $G$ satisfy the {\em simultaneous triple
product property} if
\begin{enumerate}
\item for each $i$, the triple $S_i, T_i, U_i$ satisfies the triple
  product property in $G$, and
\item for all $i, j, k$ and $s \in S_i, s' \in S_j,  t \in T_j, t' \in
  T_k, u \in U_k, u' \in U_i$ we have
\[s^{-1}s't^{-1}t'u^{-1}u' = 1 \Rightarrow i = j = k.\]
\end{enumerate}
\end{definition}

One can understand an STPP construction as a means of reducing several
independent matrix multiplications (of format $|S_i| \times |T_i|$ by $|T_i|
\times |U_i|$) to a single $\C[G]$ multiplication. Via either the wreath
product machinery of \cite{CKSU} or the Asymptotic Sum Inequality
\cite{Schonhage}, we obtain the following theorem.

\begin{theorem}[\cite{CKSU}]
If $S_i, T_i, U_i \subseteq G$ satisfy the simultaneous triple product property, then
\begin{equation} \label{eq:STPP}
\sum_i (|S_i| |T_i| |U_i|)^{\omega/3} \leq \sum_i d_i^\omega,
\end{equation}
where the $d_i$ are the dimensions of the irreducible representations
of $G$.
\end{theorem}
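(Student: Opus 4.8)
The plan is to show that the block-diagonal matrix multiplication tensor $\bigoplus_i\langle|S_i|,|T_i|,|U_i|\rangle$ is a restriction of the structure tensor of $\C[G]$, and then apply \Schonhage's asymptotic sum inequality in place of the single-tensor bound used in the proof of Theorem~\ref{thm:TPP}. First I would set up the encoding exactly as for one TPP instance, but block-diagonally: index the rows of the $i$th product by $S_i$ and its columns by $T_i$, so that one must compute $C^{(i)}=A^{(i)}B^{(i)}$ with $A^{(i)}\in\C^{S_i\times T_i}$ and $B^{(i)}\in\C^{T_i\times U_i}$, and encode these as
\[
\hat A=\sum_i\sum_{s\in S_i,\,t\in T_i}A^{(i)}_{s,t}\,st^{-1}\qquad\text{and}\qquad\hat B=\sum_i\sum_{t\in T_i,\,u\in U_i}B^{(i)}_{t,u}\,tu^{-1}
\]
in $\C[G]$. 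Expanding $\hat A\hat B=\sum A^{(i)}_{s,t}B^{(j)}_{t',u}\,st^{-1}t'u^{-1}$, the coefficient of a group element of the form $s_0u_0^{-1}$ with $s_0\in S_{i_0}$ and $u_0\in U_{i_0}$ is the sum of $A^{(i)}_{s,t}B^{(j)}_{t',u}$ over all tuples with $st^{-1}t'u^{-1}=s_0u_0^{-1}$, equivalently $s_0^{-1}s\,t^{-1}t'\,u^{-1}u_0=1$. This relation has exactly the shape of condition~(2) of the STPP, so it forces $i_0=i=j$, and then condition~(1) (the plain triple product property inside one block) forces $s=s_0$, $t=t'$, $u=u_0$. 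Hence the coefficient equals $\sum_{t\in T_{i_0}}A^{(i_0)}_{s_0,t}B^{(i_0)}_{t,u_0}=C^{(i_0)}_{s_0,u_0}$, with no cross-block contributions and no spurious ``junk'' terms; the same two conditions also guarantee that the two substitutions and the output projection onto $\{\,su^{-1}:s\in S_i,\,u\in U_i,\text{ some }i\,\}$ are consistent. This is precisely the statement that $\bigoplus_i\langle|S_i|,|T_i|,|U_i|\rangle$ is a restriction of the structure tensor $T_{\C[G]}$, exactly as in the proof of Theorem~\ref{thm:TPP} for a single triple.

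Second I would convert this restriction into \eqref{eq:STPP}. By Wedderburn, $\C[G]\cong M_{d_1}(\C)\oplus\dotsb\oplus M_{d_k}(\C)$, so $T_{\C[G]}\cong\bigoplus_j\langle d_j,d_j,d_j\rangle$. One cannot simply apply Theorem~\ref{thm:TPP} to each block and sum, as that overcounts by the number of blocks, nor does the bare restriction suffice, since it only gives $\sum_i(|S_i||T_i||U_i|)^{\omega/3}\le R(T_{\C[G]})$ (with $R$ the tensor rank), which for one block would read $(nmp)^{\omega/3}\le R(\langle n,m,p\rangle)$, far from the truth $n^\omega$. The correct tool is \Schonhage's asymptotic sum inequality, applied after passing to high tensor powers: $\big(\bigoplus_i\langle|S_i|,|T_i|,|U_i|\rangle\big)^{\otimes N}$ is again a direct sum of matrix multiplication tensors and is a restriction of $T_{\C[G]}^{\otimes N}\cong T_{\C[G^N]}\cong\bigoplus_{J}\langle D_J,D_J,D_J\rangle$, where $J$ ranges over $N$-tuples of irreducible representations of $G$ and $D_J$ is the product of the corresponding dimensions; bounding the right-hand side by near-optimal matrix multiplication, $R(\langle D,D,D\rangle)\le C_\epsilon D^{\omega+\epsilon}$, and applying the asymptotic sum inequality to the (direct-sum) tensor on the left yields
\[
\Big(\sum_i(|S_i||T_i||U_i|)^{\omega/3}\Big)^{N}\le C_\epsilon\Big(\sum_j d_j^{\,\omega+\epsilon}\Big)^{N};
\]
taking $N$th roots and letting $N\to\infty$, then $\epsilon\to0$, gives \eqref{eq:STPP}. (Equivalently, one invokes the functional on direct sums of matrix multiplication tensors supplied by \Schonhage's inequality---monotone under restriction, additive over $\oplus$, multiplicative over $\otimes$, and equal to $(nmp)^{\omega/3}$ on $\langle n,m,p\rangle$---and evaluates it on both sides.) As an alternative route, \cite{CKSU} shows that the STPP in $G$ induces an honest single triple product property in the wreath product $G\wr S_n=G^n\rtimes S_n$, whose parameters grow like $n!$ times an exponential in $n$; feeding this into Theorem~\ref{thm:TPP}, using the known representation theory of $G\wr S_n$ and Stirling's formula, recovers \eqref{eq:STPP} in the limit $n\to\infty$.

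The bookkeeping in the first step is routine once the single-TPP case is accepted, so the one place where genuinely more is needed is the passage from ``restriction of $T_{\C[G]}$'' to the \emph{sharp} right-hand side $\sum_j d_j^\omega$: both $R(T_{\C[G]})$ and the blockwise bound (number of blocks times $\sum_j d_j^\omega$) are too weak. Supplying the right-hand side $\sum_j d_j^\omega$ is exactly what \Schonhage's asymptotic sum inequality does, through the $\oplus$-additivity that collapses $\bigoplus_j\langle d_j,d_j,d_j\rangle$, and invoking it correctly---equivalently, carrying out the wreath-product amplification of \cite{CKSU}---is the main thing to get right.
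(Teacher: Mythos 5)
Your proposal is correct and follows exactly the route the paper itself indicates: the paper states this theorem as a citation to \cite{CKSU}, remarking only that it follows ``via either the wreath product machinery of \cite{CKSU} or the Asymptotic Sum Inequality \cite{Schonhage},'' and your argument fleshes out both of those routes faithfully --- the STPP conditions do yield the block-diagonal restriction $\bigoplus_i\langle|S_i|,|T_i|,|U_i|\rangle \le T_{\C[G]}$, and the tensor-power/\Schonhage step correctly upgrades $R(T_{\C[G]})$ to the sharp right-hand side $\sum_j d_j^\omega$. No gaps.
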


STPP constructions generalize TPP constructions and indeed are the
most general kind of construction in the group-theoretic framework. It
is STPP constructions that can be made to mimic the
Coppersmith--Winograd result \cite{CW} and the recent improvements \cite{Stothers, williams, legall}, thus
capturing the best known bounds on $\omega$. An important constraint on
STPP constructions, which can easily be derived from the definition, are the {\em packing bounds} which assert that
\ifconfversion
$\sum_i |S_i||T_i| \le |G|$, and similarly with $S,T$ replaced with $T,U$ and $S,U$.
\else
\[\sum_i |S_i||T_i| \le |G|, \;\;\;\; \sum_i |T_i||U_i|
  \le |G|,\;\;\;\; \text{and } \sum_i |S_i||U_i| \le |G|.\]
  \fi 
One can hope to obtain constructions that come very close to this bound:

\begin{definition}[{Packing bound \cite[Def.~2.3]{BCCGNSU}}]
A family of STPP constructions in groups $G$ with $|G| \rightarrow
\infty$ \emph{meets the packing bound} if
\[\sum_i |S_i||T_i| \ge |G|^{1 - o(1)}, \;\;\;\; \sum_i |T_i||U_i|
  \ge |G|^{1 - o(1)},\;\;\;\; \text{and }\, \sum_i |S_i||U_i| \ge |G|^{1 - o(1)}.\]
\end{definition}

This provides a useful {\em necessary} condition for a STPP construction to
prove $\omega = 2$:

\begin{theorem}[{\cite[Lemma~2.4]{BCCGNSU}}]
Any family of STPP constructions that does not meet the packing bound cannot
imply $\omega = 2$ via Inequality \ref{eq:STPP}.
\end{theorem}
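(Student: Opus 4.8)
The statement to prove is: \emph{Any family of STPP constructions that does not meet the packing bound cannot imply $\omega = 2$ via Inequality~\ref{eq:STPP}.} The plan is to argue contrapositively: suppose a family of STPP constructions does imply $\omega = 2$ via \eqref{eq:STPP}; we will show it must meet the packing bound. So fix such a family in groups $G$ with $|G| \to \infty$, and suppose that for every $\omega > 2$ we have $\sum_i (|S_i||T_i||U_i|)^{\omega/3} \le \sum_i d_i^\omega$. The key elementary inequality is that $\sum_i d_i^\omega \le \left(\max_i d_i\right)^{\omega - 2}\sum_i d_i^2 = d_{\max}^{\omega-2}\,|G| \le |G|^{\omega/2}$, using $d_{\max} \le \sqrt{|G|}$ and $\sum_i d_i^2 = |G|$. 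Hence for all $\omega > 2$,
\[
\sum_i (|S_i||T_i||U_i|)^{\omega/3} \le |G|^{\omega/2}.
\]

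First I would bound the left-hand side from below in terms of the three packing quantities. Writing $P_{ST} = \sum_i |S_i||T_i|$, $P_{TU} = \sum_i |T_i||U_i|$, $P_{SU} = \sum_i |S_i||U_i|$, I would apply Hölder's inequality (or a direct power-mean estimate) to relate $\sum_i (|S_i||T_i||U_i|)^{\omega/3}$ to products of the $P$'s. The cleanest route: for each $i$, $(|S_i||T_i||U_i|)^{2/3}$ is the geometric mean of $|S_i||T_i|$, $|T_i||U_i|$, $|S_i||U_i|$, so by the inequality between geometric and arithmetic means applied termwise and then summed—or more precisely by Hölder with three exponents equal to $3$—one gets
\[
\sum_i (|S_i||T_i||U_i|)^{2/3} \le \left(P_{ST}\,P_{TU}\,P_{SU}\right)^{1/3}.
\]
That is the ``wrong'' direction for our purposes, so instead I would use a \emph{lower} bound: taking $\omega \to 2^+$, the displayed inequality forces $\sum_i (|S_i||T_i||U_i|)^{2/3} \le |G|$. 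Meanwhile, by Cauchy--Schwarz (or again Hölder), $P_{ST} = \sum_i |S_i||T_i| \le \left(\sum_i (|S_i||T_i||U_i|)^{2/3}\right)^{3/2} \big/ \left(\sum_i |U_i| \cdot(\text{something})\right)$—this needs care, so let me restructure.

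The cleaner argument is: assume toward a contradiction that the family does \emph{not} meet the packing bound, so after passing to a subsequence, one of the packing sums, say $P_{ST}$, satisfies $P_{ST} \le |G|^{1-\delta}$ for a fixed $\delta > 0$. Since always $|S_i||T_i| \le P_{ST}$ and $|U_i| \le |G|$, we get $|S_i||T_i||U_i| \le P_{ST}\cdot|G|$, hence $(|S_i||T_i||U_i|)^{\omega/3} \le (P_{ST}|G|)^{(\omega-2)/3}(|S_i||T_i||U_i|)^{2/3}$; summing and using $\sum_i(|S_i||T_i||U_i|)^{2/3} \le (P_{ST}P_{TU}P_{SU})^{1/3} \le |G|\cdot|G|^{-\delta/3}$ gives $\sum_i(|S_i||T_i||U_i|)^{\omega/3} \le (P_{ST}|G|)^{(\omega-2)/3}\cdot|G|^{1-\delta/3}$. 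For $\omega$ sufficiently close to $2$ this is strictly smaller than the bound that equality at $\omega=2$ would demand, but more to the point, the \emph{slack} here propagates: one shows that \eqref{eq:STPP} is then satisfied by some $\omega < 2$, so it cannot ``imply'' $\omega = 2$ (it proves something weaker, or rather, the inequality is too weak to pin $\omega$ down to $2$). I expect the main obstacle to be pinning down the precise meaning of ``imply $\omega = 2$ via \eqref{eq:STPP}''—one must show that the deficit $|G|^{-\delta/3}$ in the left side, combined with the freedom in $d_{\max}$, lets \eqref{eq:STPP} hold with an exponent bounded away from $2$ uniformly in the family; this is exactly a quantitative version of the power-mean manipulations above, and the bookkeeping of which exponents to use in Hölder is the only genuinely fiddly part.
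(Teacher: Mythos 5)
Your second, restructured argument contains the right ingredients---H\"older with exponents $(3,3,3)$ gives $\sum_i(|S_i||T_i||U_i|)^{2/3}\le (P_{ST}P_{TU}P_{SU})^{1/3}\le |G|^{1-\delta/3}$ once one packing sum is at most $|G|^{1-\delta}$, and a pointwise bound absorbs the extra $(\omega-2)/3$ in the exponent---and this is essentially the argument behind \cite[Lemma~2.4]{BCCGNSU}, which the present paper only cites and does not reprove. But as written your proof has two genuine gaps. First, you never compare against the right-hand side of \eqref{eq:STPP}: the whole point is that $\sum_i d_i^{2+\epsilon}\ge\sum_i d_i^2=|G|$ (since each $d_i\ge 1$), whereas your upper bound on the left-hand side at $\omega=2+\epsilon$ is $|G|^{(2-\delta)\epsilon/3}\cdot|G|^{1-\delta/3}$, which is strictly less than $|G|$ once $\epsilon<\delta/(2-\delta)$; without that comparison nothing has been concluded. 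Second, your formalization of ``implies $\omega=2$'' is backwards in the first half (assuming \eqref{eq:STPP} holds for every $\omega>2$ is close to the \emph{negation} of implying $\omega=2$, since a construction certifies $\omega\le\tau$ only when the inequality \emph{fails} for exponents above $\tau$), and the conclusion of the second half---that \eqref{eq:STPP} ``is satisfied by some $\omega<2$''---is the wrong target, because satisfying the inequality below $2$ is vacuous. The correct statement, consistent with how the paper uses Proposition~\ref{prop:nec-TPP}, is that \eqref{eq:STPP} is \emph{satisfied} at $\omega=2+\epsilon$ for a fixed $\epsilon>0$ depending only on $\delta$, uniformly over the subfamily; a construction whose inequality is consistent with $\omega=2+\epsilon$ cannot certify any bound below $2+\epsilon$.

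With those two repairs the argument closes. Fix $\epsilon<\delta/2$ and use the simpler termwise bound $|S_i||T_i||U_i|\le|G|^2$ (from $|S_i||T_i|\le|G|$ and $|U_i|\le|G|$), so that $(|S_i||T_i||U_i|)^{(2+\epsilon)/3}\le|G|^{2\epsilon/3}(|S_i||T_i||U_i|)^{2/3}$. Summing and applying your H\"older step yields
\[
\sum_i(|S_i||T_i||U_i|)^{(2+\epsilon)/3}\;\le\;|G|^{2\epsilon/3}\cdot|G|^{1-\delta/3}\;<\;|G|\;\le\;\sum_i d_i^{2+\epsilon},
\]
so \eqref{eq:STPP} holds at $2+\epsilon$ on the subsequence and the family cannot imply $\omega=2$. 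Your reduction to a subsequence with $P_{ST}\le|G|^{1-\delta}$ and your H\"older inequality are both fine; it is only the endgame that is missing.
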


Finally we can state the connection between the group theoretic approach and
multiplicative matchings, which was proved in \cite{BCCGNSU}:

\begin{theorem}
Any family of STPP constructions in groups $G$ with $|G| \rightarrow \infty$
that meets the packing bound implies the existence of multiplicative matchings in $G^N$ with
cardinality $|G|^{N(1 - \epsilon)}$ for {\em arbitrarily small} $\epsilon
>0$, by choosing sufficiently large $|G|$ and $N$.
\end{theorem}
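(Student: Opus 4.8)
The plan is to turn the STPP into a matching by running the Cohn--Umans reduction in reverse. The right setting is that of \emph{monomial} (``combinatorial'') restrictions of tensors: an STPP construction $\{(S_i,T_i,U_i)\}$ in $G$ is exactly a way of realizing $\bigoplus_i\langle|S_i|,|T_i|,|U_i|\rangle$ as a monomial restriction of the multiplication tensor of $\C[G]$ (the maps send basis vectors to group elements $s^{-1}t$, $t^{-1}u$, $u^{-1}s$, using the TPP to guarantee distinctness and the simultaneous condition to keep the summands separated), while a multiplicative matching of cardinality $n$ in a group $H$ is precisely a monomial restriction of the multiplication tensor of $\C[H]$ to the diagonal tensor $\langle n\rangle=\bigoplus^n\langle1,1,1\rangle$. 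Since the multiplication tensor of $\C[G^N]$ is the $N$-fold tensor power of that of $\C[G]$, and monomial restrictions compose and tensor, it suffices to produce, for any $\epsilon>0$ and a suitable $N$, a monomial restriction of $\bigl(\bigoplus_i\langle|S_i|,|T_i|,|U_i|\rangle\bigr)^{\otimes N}$ to $\langle n\rangle$ with $n\ge|G|^{N(1-\epsilon)}$, and then pull it back through the STPP embedding. (Only monomial restrictions are allowed: the multiplication tensor of any $\C[G]$ has full ordinary subrank $|G|$, and it is precisely the monomial restrictions that the slice-rank bounds of this paper constrain.)

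To build such a restriction, first \emph{symmetrize}: replace the STPP in $G$ by the STPP in $G^3$ with triples $(S_i\times T_j\times U_k,\ T_i\times U_j\times S_k,\ U_i\times S_j\times T_k)$ over $(i,j,k)$, whose direct-sum tensor is a triple tensor product of cyclically-relabelled copies of $\bigoplus_i\langle|S_i|,|T_i|,|U_i|\rangle$. This makes the three coordinates play symmetric roles and makes all three packing sums equal, each now $\ge|G|^{3(1-o(1))}=|G^3|^{1-o(1)}$ when the original family meets the packing bound. Now take a high tensor power $N$, index the summands by block sequences $\vec\imath$, retain only those of a well-chosen empirical type $\pi$, and on each retained summand $\langle A_{\vec\imath},B_{\vec\imath},C_{\vec\imath}\rangle$ use the combinatorial core of the Coppersmith-style analysis: a matrix-multiplication tensor $\langle a,b,c\rangle$ admits, in the limit of tensor powers, a monomial restriction to $\langle r\rangle$ with $r=\min\{ab,bc,ca\}^{1-o(1)}$. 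Since $A_{\vec\imath},B_{\vec\imath},C_{\vec\imath}$ have sizes $\exp(N L_S(\pi)),\exp(N L_T(\pi)),\exp(N L_U(\pi))$ for the $\pi$-averaged log-sizes, and there are $\exp(NH(\pi))$ retained summands, the bookkeeping gives $n\approx\exp\!\bigl(N[\,H(\pi)+\min\{L_S+L_T,\,L_T+L_U,\,L_S+L_U\}\,]\bigr)$; optimizing the type $\pi$ and using the symmetry installed by the previous step yields $n\approx\bigl(\sum_i(|S_i||T_i||U_i|)^{2/3}\bigr)^{3N}$.

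The remaining step --- and, I expect, the hardest one --- is to show that $\sum_i(|S_i||T_i||U_i|)^{2/3}\ge|G|^{1-o(1)}$ whenever the family meets the packing bound, which is exactly what is needed to conclude $n\ge|G|^{3N(1-o(1))}=|G^3|^{N(1-o(1))}$ and hence, after further tensor powers and letting $|G|\to\infty$ along the family, matchings in $G^N$ of cardinality $|G|^{N(1-\epsilon)}$ for every $\epsilon>0$. A cheap inequality gives only $\sum_i(|S_i||T_i||U_i|)^{2/3}\ge|G|^{2/3-o(1)}$, from $\sum_i|S_i||T_i|\ge|G|^{1-o(1)}$ together with the per-block bound $|S_i||T_i|\le|G|$; closing the gap to $|G|^{1-o(1)}$ must exploit that \emph{all three} packing sums are large at once together with the cross-terms of the simultaneous triple product property, which forbid the ``unbalanced'' block profiles (many blocks feeding one packing sum but not the others) that would otherwise shrink $\sum_i(|S_i||T_i||U_i|)^{2/3}$. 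As a sanity check, for a single block the packing-bound hypothesis already forces $|S|,|T|,|U|$ each of size $|G|^{1/2\pm o(1)}$, so $(|S||T||U|)^{2/3}=|G|^{1-o(1)}$ outright and the matrix-multiplication fact above finishes; the general case is a quantitative version of this rigidity.
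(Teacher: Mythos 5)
Your architecture is the standard one and, as far as it goes, matches the route behind the cited result in [BCCGNSU]: view the STPP as a monomial restriction of $\mult{G}$ to $\bigoplus_i\langle|S_i|,|T_i|,|U_i|\rangle$, symmetrize the three roles, pass to tensor powers, keep a well-chosen type class, and extract a diagonal from each matrix-multiplication block; the bookkeeping you describe indeed tops out at matchings of size roughly $\bigl(\sum_i(|S_i||T_i||U_i|)^{2/3}\bigr)^{N(1-o(1))}$ per $N$ copies of $G$. The problem is the step you yourself flag as the hardest: deducing $\sum_i(|S_i||T_i||U_i|)^{2/3}\ge|G|^{1-o(1)}$ from the packing bound. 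You offer no argument, and it is not ``a quantitative version of the single-block rigidity,'' because at the level of sizes the implication is simply false: take blocks with shapes $(|S_i|,|T_i|,|U_i|)\approx(\sqrt{|G|},\sqrt{|G|},1)$, $(1,\sqrt{|G|},\sqrt{|G|})$, $(\sqrt{|G|},1,\sqrt{|G|})$. All three packing sums are then $|G|^{1-o(1)}$, while $\sum_i(|S_i||T_i||U_i|)^{2/3}\approx 3|G|^{2/3}$. Worse, for such a profile your pipeline cannot be rescued by a cleverer type optimization: the entropy expression you are maximizing caps out at $|G|^{(2/3+o(1))N}$, and indeed the direct sum $\langle a,a,1\rangle\oplus\langle1,a,a\rangle\oplus\langle a,1,a\rangle$ with $a=\sqrt{|G|}$ has asymptotic slice rank (hence asymptotic subrank, hence maximal diagonals in its powers) only about $|G|^{2/3}$, by the same type/marginal-entropy computation run as an upper bound. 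So closing your gap would require a genuinely structural theorem---that the simultaneous (cross-term) conditions of an STPP forbid lopsided block profiles that meet the packing bound---and nothing in your sketch, including the ``cheap inequality'' paragraph, supplies or even outlines such an argument. It is also not a known counting consequence: your hypothetical profile is consistent with every standard necessary condition ($\sum_i|S_i||T_i|\le|G|$ and its cyclic variants, and the STPP inequality at any true $\omega\ge2$).

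By contrast, the argument this paper relies on does not need that rigidity statement, because the inequality that actually powers the construction is available directly in the setting where the theorem is used: for a family that would prove $\omega=2$, refuting $\omega=2+\epsilon$ forces $\sum_i(|S_i||T_i||U_i|)^{(2+\epsilon)/3}>\sum_j d_j^{2+\epsilon}\ge\sum_j d_j^2=|G|$, and since $(|S_i||T_i||U_i|)^{\epsilon/3}\le|G|^{O(\epsilon)}$ this gives $\sum_i(|S_i||T_i||U_i|)^{2/3}\ge|G|^{1-o(1)}$ outright; the packing bound is a companion necessary condition rather than the engine. So your plan becomes correct if you feed the construction the $2/3$-power-sum largeness (or restrict attention to families proving $\omega=2$), but as written the final step is a genuine gap, not a verification left to the reader. (A small side issue: your parenthetical that $\mult{\C[G]}$ has ``full ordinary subrank $|G|$'' is not right as stated---already $\langle d,d,d\rangle$ has subrank strictly below $d^2$---though the asymptotic version is true and your point that matchings correspond to monomial restrictions stands.)
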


Thus for a family of groups $G$ to host an embedding of matrix multiplication
that proves $\omega = 2$ in the group-theoretic framework, powers of $G$ must
contain multiplicative matchings that are nearly the largest possible.

\mysubsection{Slice rank of tensors} Given three finite sets $X,Y,Z$ and a
field $\F$ we think of functions $F\colon X \times Y \times Z \to \F$ as
3-tensors. The \emph{slice rank} of such an $F$ (introduced by Tao
\cite{taoBlog}; see also \cite{BCCGNSU, taoSawin}) is the smallest $r$ such
that there are functions $f_i, g_i$ for which we can write
\[
F(x,y,z) = \sum_{i=1}^a f_i(x,y) g_i(z) + \sum_{i=a+1}^b f_i(x,z) g_i(y) + \sum_{i=b+1}^r f_i(y,z) g_i(x).
\]
\ifconfversion \else When $F$ only takes the values $\{0,1\}$, we may
consider its slice rank over various fields $\F$, in which case we write
$\slicerank_\F(F)$, though usually $\F$ will be clear from context. We
sometimes refer to {\em characteristic $p$ slice rank} to emphasize that the
characteristic is playing a critical role in the bound under discussion.
\fi 

Given an algebra $\mathcal{D}$ (such as a group ring $\F[G]$), its
multiplication tensor $\mult{\mathcal{D}}$ relative to a basis $x_1,
\dotsc, x_{\dim \mathcal{D}}$ is defined by \ifconfversion $x_i x_j = \sum_k
\mult{\mathcal{D}}(i,j,k) x_k$. \else
\[
x_i x_j = \sum_k \mult{\mathcal{D}}(i,j,k) x_k.
\]
\fi In particular, for a group $G$, if we choose the group elements as the
basis of $G$, we find that $\mult{\F[G]}$ only has zero-one values, and so can in
fact be defined over any field; when we wish to leave the field unspecified
we thus write $\mult{G}$ for the multiplication tensor of a group $G$.

If we think of a tensor $F \colon X \times Y \times Z \to \F$ as an element
of $\F^X \otimes \F^Y \otimes \F^Z$, then the slice rank is invariant
under change of basis in each of the three factors (that is, the action of
the group $\GL_{|X|}(\F) \times \GL_{|Y|}(\F) \times \GL_{|Z|}(\F)$). Thus,
even if we have a function $F \colon X \times X \times X \to \F$, we may
choose different bases for each of the three copies of $\F^X$ and reason
about the slice rank of $\F$ in our favorite three bases, which will be a
useful trick.

Slice rank gives us a means to bound the cardinality of multiplicative
matchings from above:

\begin{proposition}[Tao \cite{taoBlog}]
\label{prop:tao} If $G$ contains a multiplicative matching of cardinality $m$
then $\slicerank(\mult{G}) \geq m$ (over any field).
\end{proposition}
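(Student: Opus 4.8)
The plan is to reduce the statement to one clean fact: the \emph{diagonal tensor} $\Delta_m\colon[m]^3\to\F$, with $\Delta_m(i,j,k)=1$ when $i=j=k$ and $0$ otherwise, has slice rank exactly $m$ over every field. The bridge is that slice rank cannot increase when one applies an arbitrary linear map to a single tensor factor: given a slice decomposition of $F$ and a linear map $A$ acting on, say, the first factor, applying $A\otimes\mathrm{id}\otimes\mathrm{id}$ sends each slice to a slice of the same type---it replaces every ``function of the first coordinate'' $u$ by $Au$, and contracts $A$ into the bivariate part of every slice isolating the second or third coordinate---yielding a decomposition of $(A\otimes\mathrm{id}\otimes\mathrm{id})F$ with no more terms. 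Change of basis (invertible $A$), restriction to a sub-index-set (a coordinate projection), and pulling back along any map of index sets are all special cases.

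First I would recast the matching. Let $(s_i),(t_i),(u_i)$ for $i\in[m]$ be a multiplicative matching in $G$, so that $s_it_ju_k=\1$ exactly when $i=j=k$. Taking the group elements as a basis, $\mult{G}(g,h,k)$ equals $1$ when $gh=k$ and $0$ otherwise, so $\mult{G}(g,h,k^{-1})$ equals $1$ exactly when $ghk=\1$. Pulling $\mult{G}$ back along $i\mapsto s_i$, $j\mapsto t_j$, $k\mapsto u_k^{-1}$ therefore produces the tensor sending $(i,j,k)$ to $1$ if $s_it_ju_k=\1$ and to $0$ otherwise---that is, exactly $\Delta_m$. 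By the principle above, $\slicerank(\mult{G})\ge\slicerank(\Delta_m)$, so it remains to prove $\slicerank(\Delta_m)=m$.

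The bound $\slicerank(\Delta_m)\le m$ is immediate, so the content is $\slicerank(\Delta_m)\ge m$, and this is the step I expect to be the real obstacle: the naive dimension count (exhibiting a single vector annihilating all the slices that isolate one coordinate) is far too weak, and one must instead peel off all slices of one type at once. I would induct on $m$. In a decomposition $\Delta_m(i,j,k)=\sum_{a=1}^{r_1}f_a(i)\,g_a(j,k)+(\text{slices isolating }j)+(\text{slices isolating }k)$ with $r=r_1+r_2+r_3$ terms---where, after relabelling coordinates and using $\Delta_m\ne 0$, we may take $r_1\ge 1$---the functions $f_1,\dots,f_{r_1}\in\F^{[m]}$ span a space of dimension at most $r_1$, so the linear map $x\mapsto\bigl(\sum_i x_i f_a(i)\bigr)_{a\le r_1}$ has a kernel of dimension at least $m-r_1$; choose linearly independent $x^{(1)},\dots,x^{(m-r_1)}$ in it and contract the first factor against these vectors (a linear map $\F^{[m]}\to\F^{m-r_1}$). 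This annihilates the $r_1$ first-type slices, turns the remaining ones into slices isolating $j$ or $k$, and yields the tensor sending $(\ell,j,k)$ to $X_{\ell j}$ when $j=k$ and to $0$ otherwise, where $X$ is the $(m-r_1)\times m$ matrix with rows $x^{(1)},\dots,x^{(m-r_1)}$, hence of rank $m-r_1$. Restricting $j$ and $k$ to a set of $m-r_1$ columns of $X$ that span its column space, and then applying the inverse of the resulting invertible submatrix as a change of basis in the first factor, produces $\Delta_{m-r_1}$. Since none of these operations increases slice rank, $\slicerank(\Delta_{m-r_1})\le r_2+r_3=r-r_1$; when $1\le r_1\le m-1$ the inductive hypothesis gives $\slicerank(\Delta_{m-r_1})=m-r_1$, forcing $r\ge m$, while $r_1\ge m$ gives $r\ge r_1\ge m$ directly (the base case $m=1$ being trivial). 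Combined with the previous paragraph, this gives $\slicerank(\mult{G})\ge m$, as claimed.
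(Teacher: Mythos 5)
Your proposal is correct, but it does not follow the route associated with this statement in the paper: Proposition~\ref{prop:tao} is quoted from Tao without proof, and the closest the paper comes to proving the underlying fact is Appendix~\ref{sec:other}, where the full slice rank of a diagonal tensor (Corollary~\ref{cor:flat-rank-diagonal}) is deduced from the exact additivity of flat rank (Theorem~\ref{thm:flat_additive}), an argument that invokes Zariski-genericity and hence works most naturally over infinite fields. Your first step---pulling $\mult{G}$ back along $i\mapsto s_i$, $j\mapsto t_j$, $k\mapsto u_k^{-1}$ and using monotonicity of slice rank under linear maps applied to a single factor---is the standard reduction to the diagonal tensor $\Delta_m$ and matches what Tao and \cite{BCCGNSU} do. Where you genuinely diverge is the lower bound $\slicerank(\Delta_m)\ge m$: Tao's argument fixes a decomposition, finds a \emph{single} vector of support at least $m-r_3$ annihilating the $r_3$ slices that isolate the third coordinate, contracts against it, and compares the rank of the resulting diagonal matrix with $r_1+r_2$; you instead contract against a full basis of the annihilator of the first-type slices, observe that the result is $X_{\ell j}\delta_{jk}$ with $X$ of rank $m-r_1$, clean it up by restriction and an invertible change of basis to get $\Delta_{m-r_1}$, and recurse. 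I checked the steps: the contraction does kill the first-type slices and sends the other two types to slices of the same type, the rows being independent gives $\Rank(X)=m-r_1$ so an invertible $(m-r_1)\times(m-r_1)$ column submatrix exists, and the edge cases ($r_1\ge m$, $r_1=0$ handled by symmetry of $\Delta_m$, base case $m=1$) are covered. Your induction is a legitimate elementary alternative: like Tao's proof it works over every field with no genericity or field-extension caveats, at the cost of not yielding the stronger additivity statement (and hence the full-slice-rank results for matrix multiplication and semisimple group algebras) that the paper's flat-rank machinery buys.
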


In summary, we have the following implications: $\omega = 2$ via STPP in family $G$ $\Longrightarrow$
nearly-largest-possible multiplicative matchings in powers of $G$
$\Longrightarrow$ slice rank of $G$-multiplication tensor is at least $|G|^{1
- o(1)}$, which is encapsulated by the following corollary:

\begin{corollary}[Key corollary] \label{cor:key}
Given a family of groups $G$ with $\slicerank(\mult{G}) \leq
|G|^{1-\Omega(1)}$, no STPP construction in this family can prove $\omega=2$
via Inequality (\ref{eq:STPP}).
\end{corollary}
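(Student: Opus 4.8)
The plan is to prove the contrapositive by chaining together the three results just recalled, supplemented by the standard submultiplicativity of slice rank under tensor products. Unwinding the hypothesis, $\slicerank(\mult{G}) \le |G|^{1-\Omega(1)}$ means there is a constant $c > 0$, uniform over the family, with $\slicerank(\mult{G}) \le |G|^{1-c}$ for all (sufficiently large) $G$ in the family; fix a field over which this holds. Suppose, toward a contradiction, that some STPP construction in the family proves $\omega = 2$ via Inequality~\eqref{eq:STPP}.

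First, the contrapositive of \cite[Lemma~2.4]{BCCGNSU} forces the family to meet the packing bound. The theorem relating the packing bound to matchings then gives, for the particular value $\epsilon = c/2$, a group $G$ in the family and an integer $N \ge 1$ such that $G^N$ contains a multiplicative matching of cardinality at least $|G|^{N(1-c/2)}$. Applying Tao's Proposition~\ref{prop:tao} to $G^N$ (over our fixed field) then yields $\slicerank(\mult{G^N}) \ge |G|^{N(1-c/2)}$.

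For the opposing bound, note that choosing group elements as bases identifies $\mult{G^N}$ with the tensor power $\mult{G}^{\otimes N}$, and slice rank satisfies $\slicerank(F_1 \otimes F_2) \le \slicerank(F_1)\,\slicerank(F_2)$; hence $\slicerank(\mult{G^N}) \le \slicerank(\mult{G})^N \le |G|^{N(1-c)}$. Crucially, here we apply the hypothesis only to $G$ itself, not to $G^N$ (which need not lie in the family), which is exactly why the submultiplicativity step is required. Comparing the two estimates gives $|G|^{N(1-c/2)} \le |G|^{N(1-c)}$; since $|G| \ge 2$ and $N \ge 1$, taking logarithms yields $1 - c/2 \le 1 - c$, i.e.\ $c \le 0$, contradicting $c > 0$. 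This completes the proof. There is no real obstacle here—the statement is a packaging of the preceding results—so the only points meriting care are the two ingredients not isolated above: that $\mult{G^N} = \mult{G}^{\otimes N}$ and that slice rank is submultiplicative under tensor products (both standard, and, like Proposition~\ref{prop:tao}, insensitive to the choice of field, so that we are free to work in whatever characteristic makes $\slicerank(\mult{G})$ small), together with the observation that the constant $c$ from the $\Omega(1)$ is uniform over the family and so survives the quantifier ordering in the matching theorem.
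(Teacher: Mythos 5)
Your proposal is correct and follows essentially the same route the paper intends: the corollary is presented as a packaging of the chain ``$\omega=2$ via STPP $\Rightarrow$ packing bound $\Rightarrow$ large matchings in $G^N$ $\Rightarrow$ slice rank lower bound,'' and the submultiplicativity step you supply to pass from $\mult{G^N}$ back to $\mult{G}$ is exactly the mechanism the paper itself invokes (explicitly in Appendix~\ref{sec:tight}) for this transfer. Your attention to the uniformity of the constant and the field-independence of Proposition~\ref{prop:tao} is appropriate but raises no new issues.
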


In the next section we use this to rule out proving $\omega = 2$ in a large
class of nilpotent groups.

\mysection{Ruling out a large class of nilpotent groups}

We begin with a general lemma about slice rank and a consequence about the
slice rank of algebras over a field, which may have further uses. Next we
prove a result ruling out proving $\omega = 2$ in a large class of
$p$-groups, and then use the fact that nilpotent groups are direct products
of $p$-groups, together with machinery for passing to group extensions, to
rule out proving $\omega = 2$ via a large class of nilpotent groups.

\mysubsection{Slice rank of algebras}

In the next lemma and proposition we give an appealing sufficient condition
for establishing upper bounds on the slice rank of algebras over a field.

\begin{lemma}
\label{lem:slice rank restrict}
For a function $F\colon X \times Y \times Z \to \F$ and a subset $\hat{X}
\subseteq X$, we have
\ifconfversion
$\slicerank(F) \le \slicerank(F|_{\hat{X} \times Y \times Z})+ |X|-|\hat{X}|$.
\else
\[\slicerank(F) \le \slicerank(F|_{\hat{X} \times Y \times Z})+ |X|-|\hat{X}|.\]
\fi
A similar statement holds for restricting  $Y$ or  $Z$.
\end{lemma}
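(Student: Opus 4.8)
The plan is to reduce the slice rank of $F$ to that of its restriction by ``paying one unit of slice rank'' for each element of $X \setminus \hat X$. Concretely, I would write $F = F_1 + F_2$ where $F_1$ is supported on $\hat X \times Y \times Z$ and $F_2$ is supported on $(X \setminus \hat X) \times Y \times Z$; slice rank is subadditive, so $\slicerank(F) \le \slicerank(F_1) + \slicerank(F_2)$. For $F_1$, I claim $\slicerank(F_1) = \slicerank(F|_{\hat X \times Y \times Z})$: extending a function on $\hat X \times Y \times Z$ by zero to all of $X \times Y \times Z$ cannot change the slice rank, since any slice-rank decomposition of the restriction extends to one of $F_1$ (pad the $x$-dependent factors by zero outside $\hat X$), and conversely restricting a decomposition of $F_1$ back to $\hat X$ gives a decomposition of the restriction of the same or smaller rank. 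For $F_2$, I would observe that $F_2(x,y,z) = \sum_{x_0 \in X \setminus \hat X} \mathbf{1}[x = x_0] \cdot g_{x_0}(y,z)$ where $g_{x_0}(y,z) = F(x_0, y, z)$; this is a sum of $|X| - |\hat X|$ terms each of the form $f_i(x) g_i(y,z)$, so $\slicerank(F_2) \le |X| - |\hat X|$. Combining the three inequalities gives the claim.

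The symmetric statements for $Y$ and $Z$ follow by the same argument, or simply by permuting the roles of the three tensor factors (slice rank is symmetric under permuting $X$, $Y$, $Z$ up to relabelling which ``slice direction'' is which, and in any case the decomposition above works identically with $y$ or $z$ singled out).

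I don't expect any real obstacle here — this is a routine structural fact about slice rank. The only point requiring a moment's care is the claim that zero-padding does not change slice rank, i.e.\ that $\slicerank(F|_{\hat X \times Y \times Z})$ computed as a tensor in $\F^{\hat X} \otimes \F^Y \otimes \F^Z$ equals the slice rank of its zero-extension in $\F^X \otimes \F^Y \otimes \F^Z$; this is immediate from the definition since a term $f_i(x,y)g_i(z)$ in a decomposition of the extension, when restricted to $\hat X$, is still of the allowed form, and the first ``block'' $\sum f_i(y,z) g_i(x)$ can only help (restricting $g_i$ to $\hat X$). So the one genuinely substantive line is the bound $\slicerank(F_2) \le |X| - |\hat X|$, which is just the observation that a tensor supported on a ``thin slab'' of $x$-coordinates has slice rank at most the thickness of the slab. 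I would present the proof in essentially this three-step form: (1) split $F = F_1 + F_2$ and invoke subadditivity; (2) identify $\slicerank(F_1)$ with the slice rank of the restriction; (3) bound $\slicerank(F_2)$ by $|X| - |\hat X|$ directly from the definition.
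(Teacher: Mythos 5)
Your proposal is correct and is essentially the paper's argument: the paper writes down in one step exactly the decomposition you obtain from your three-step split, namely the zero-extension of a decomposition of $F|_{\hat{X}\times Y\times Z}$ plus the terms $\sum_{w\in X\setminus\hat{X}} F(w,y,z)\,\delta_{w,x}$, with your appeal to subadditivity absorbed into writing the combined decomposition directly.
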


\ifconfversion
See full version for the proof.
\else
\begin{proof}
Let  $\hat{F} = F|_{\hat{X} \times Y \times Z}$ and $\hat{r} =
\slicerank(\hat{F})$.
There exist functions  $\hat{f}_i, f_i, \hat{g}_i, g_i$ such that
\[
\hat{F}(x,y,z) = \sum_{i=1}^a \hat{f}_i(x,y) g_i(z) + \sum_{i=a+1}^b
\hat{f}_i(x,z) g_i(y) + \sum_{i=b+1}^{\hat{r}} f_i(y,z) \hat{g}_i(x)
\quad \text{ for $x, y, z \in \hat{X} \times Y \times Z$}.
\]
For $i \le a$, let $f_i$ denote the extension of  $\hat{f}_i$ from the
domain $\hat{X} \times Y$ to  $X \times Y$ which is zero whenever  $x
\in X
\setminus \hat{X}$;
define similar extensions $f_i(x,z)$ of $\hat{f}_i(x,z)$ for $a < i
\le b$ and $g_i(x)$ of $\hat{g}_i(x)$ for  $b < i \le \hat{r}$.
For  $x \in X \setminus \hat{X}$, let $h_x(y,z) = F(x, y,z)$.
We then have the following expression for $F$, which proves the lemma:
\[
F(x,y,z) = \sum_{i=1}^a f_i(x,y) g_i(z) + \sum_{i=a+1}^b f_i(x,z)
g_i(y) + \sum_{i=b+1}^{\hat{r}} f_i(y,z) g_i(x)  +  \sum_{w \in X
\setminus \hat{X}} h_w(y,z) \delta_{w, x}. \qedhere
\]
\end{proof}
\fi 

\begin{proposition}
\label{prop:codim} Let $\D$ be a finite-dimensional algebra over a field. If
$A,B,C$ are subspaces of $\D$ satisfying $A\cdot B\subseteq C$, then
$\slicerank(\mult{\mathcal{D}}) \leq \codim A+\codim B+\dim C$.
\end{proposition}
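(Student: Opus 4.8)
The plan is to reduce the statement to Lemma~\ref{lem:slice rank restrict} by choosing a clever basis for $\D$ adapted to the subspaces $A$, $B$, $C$. First I would pick a basis $x_1,\dots,x_n$ of $\D$ (where $n=\dim\D$) whose last $\dim A$ vectors span $A$; equivalently, the first $\codim A$ basis vectors, together with $A$, span all of $\D$. Using the change-of-basis invariance of slice rank (noted in the excerpt), I may use \emph{different} bases in the three tensor factors: the first factor (the ``$x$'' slot of $\mult{\D}$) gets a basis adapted to $A$ as above, the second factor (the ``$y$'' slot) gets a basis adapted to $B$, and the third factor (the ``$z$'' slot) gets a basis adapted to $C$ in the opposite sense---its first $\dim C$ vectors span $C$.

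Now restrict the multiplication tensor. Let $\hat X\subseteq X$ index the basis vectors of $A$ in the first slot, $\hat Y\subseteq Y$ index the basis vectors of $B$ in the second slot, and $\hat Z\subseteq Z$ index the basis vectors \emph{complementary} to $C$ in the third slot. The key observation is that the restriction $\mult{\D}|_{\hat X\times\hat Y\times \hat Z}$ is identically zero: its entries are the coefficients, in the directions outside $C$, of products $a\cdot b$ with $a\in A$, $b\in B$; since $A\cdot B\subseteq C$, every such product lies in $C$ and has zero coefficient on each of those complementary basis vectors. A zero tensor has slice rank $0$. Applying Lemma~\ref{lem:slice rank restrict} three times---once for each of the three restrictions, peeling off $X\setminus\hat X$, then $Y\setminus\hat Y$, then $Z\setminus\hat Z$---gives
\[
\slicerank(\mult{\D}) \le 0 + |X\setminus\hat X| + |Y\setminus\hat Y| + |Z\setminus\hat Z| = \codim A + \codim B + \dim C,
\]
since $|X\setminus\hat X|=n-\dim A=\codim A$, likewise $|Y\setminus\hat Y|=\codim B$, and $|Z\setminus\hat Z|=n-\dim C=\dim C$ (using that the complement of $C$'s spanning vectors has size $\dim C$ only if we take $|Z|=2\dim C$; more carefully, one extends a basis of $C$ by $\codim C$ vectors and takes $\hat Z$ to be those $\codim C$ vectors, but then $|Z\setminus\hat Z|=\dim C$, which is what we want). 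I should be slightly careful about the bookkeeping here: $\hat Z$ should be chosen so that $Z\setminus\hat Z$ spans $C$, i.e.\ $\hat Z$ is the index set of a complementary subspace, of size $\codim C$, giving $|Z\setminus\hat Z|=\dim C$.

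The only real subtlety---and the step I expect to need the most care---is verifying that the triple restriction really does kill the tensor, and in particular lining up the direction of the adaptation in the third slot (we want the restriction to record coefficients \emph{outside} $C$, so $\hat Z$ indexes a complement of $C$) against the directions in the first two slots (where $\hat X,\hat Y$ index $A,B$ themselves). Once the bases are fixed so that $A=\Span\{x_i : i\in\hat X\}$, $B=\Span\{y_j : j\in\hat Y\}$, and $C=\Span\{z_k : k\notin\hat Z\}$, the containment $A\cdot B\subseteq C$ translates exactly into $\mult{\D}(i,j,k)=0$ for all $i\in\hat X$, $j\in\hat Y$, $k\in\hat Z$, and the rest is a routine application of the lemma. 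Everything else is elementary linear algebra.
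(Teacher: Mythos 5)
Your proposal is correct and follows essentially the same route as the paper: choose bases in the three slots adapted to $A$, $B$, and $C$ respectively, observe that $A\cdot B\subseteq C$ forces the restriction of $\mult{\D}$ to (indices of $A$) $\times$ (indices of $B$) $\times$ (indices of a complement of $C$) to vanish, and apply Lemma~\ref{lem:slice rank restrict} once per slot to pick up $\codim A+\codim B+\dim C$. The brief bookkeeping wobble about $|Z\setminus\hat Z|$ is resolved correctly in your own parenthetical, so the argument stands as written.
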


\begin{proof}
Let $d = \dim D$, $d_A = \dim A$, $d_B = \dim B$, and $d_C = \dim C$.
Let $x_1, \dotsc, x_d$ be a basis for $\D$ such that the prefix $x_1,
\dotsc, x_{d_A}$ is a basis for $A$, let $y_1, \dotsc, y_d$ be a basis
for $\D$ such that $\Span\{y_1, \dotsc, y_{d_B}\} = B$, and let $z_1,
\dotsc, z_d$ be a basis of $\D$ such that $\Span\{z_1, \dotsc, z_{d_C}\}
= C$. In these bases, $T=\mult{\mathcal{D}}$ looks like
\ifconfversion
$x_i y_j = \sum_{k=1}^d T(i,j,k) z_k$,
\else
\[
x_i y_j = \sum_{k=1}^d T(i,j,k) z_k,
\]
\fi
and $\hat{T} := T|_{[d_A] \times [d_B] \times \{d_C + 1, \dots, d\}} = 0$.
Hence by Lemma \ref{lem:slice rank restrict},
\[\slicerank(T) \le d-d_A+d-d_B + d_C + \slicerank(\hat{T}) = \codim A
+ \codim B + \dim C. \qedhere\]
\end{proof}

\subsubsection{The structure of $p$-groups, and the augmentation ideal} \label{sec:pgroups}

To bound the slice rank of a $p$-group, we will use the following special
case of Proposition~\ref{prop:codim}:

\begin{lemma}
\label{lem:ideal} Let $G$ be a finite group, $\F$ a field, and $I$ an ideal
in $\F [G]$. Then for any $a,b \in \N$,
\[
\slicerank(\mult{G}) \leq \codim I^a + \codim I^b + \dim I^{a+b}.
\]
\end{lemma}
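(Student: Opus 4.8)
The plan is to apply Proposition~\ref{prop:codim} directly with a judicious choice of the three subspaces $A$, $B$, $C$ inside the algebra $\D = \F[G]$. The natural guess is $A = I^a$, $B = I^b$, and $C = I^{a+b}$. Since $I$ is a two-sided ideal, powers of $I$ are also two-sided ideals, and multiplication of ideals satisfies $I^a \cdot I^b = I^{a+b}$; in particular $A \cdot B = I^a I^b \subseteq I^{a+b} = C$, so the hypothesis of Proposition~\ref{prop:codim} is met. Applying that proposition immediately yields
\[
\slicerank(\mult{\F[G]}) \le \codim I^a + \codim I^b + \dim I^{a+b},
\]
which is exactly the claimed bound (recalling that $\mult{G}$ is just $\mult{\F[G]}$ with the group-element basis, and that slice rank is basis-independent, so the particular basis used in the proof of Proposition~\ref{prop:codim} does not matter).

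Since this is essentially a one-line specialization, there is almost no obstacle: the only thing to be careful about is the degenerate cases. If $a = 0$ or $b = 0$ then $I^0 = \F[G]$ by convention, so $\codim I^0 = 0$ and the inequality still reads correctly; and if $a + b$ is large enough that $I^{a+b} = 0$ (which will happen for the augmentation ideal of a $p$-group, since the augmentation ideal is nilpotent), then $\dim I^{a+b} = 0$ and the bound becomes $\codim I^a + \codim I^b$, again consistent. One should also double-check that the conventions make $\codim$ and $\dim$ add up to $\dim \F[G] = |G|$ on the relevant pieces, but nothing here needs more than the ideal identity $I^a \cdot I^b = I^{a+b}$ and Proposition~\ref{prop:codim}.

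I would therefore write the proof as: let $A = I^a$, $B = I^b$, $C = I^{a+b}$; observe $A \cdot B = I^{a+b} \subseteq C$ because $I$ is an ideal and powers of ideals multiply additively; invoke Proposition~\ref{prop:codim} to conclude $\slicerank(\mult{\F[G]}) \le \codim A + \codim B + \dim C = \codim I^a + \codim I^b + \dim I^{a+b}$; and finally note $\slicerank(\mult{G}) = \slicerank(\mult{\F[G]})$ over the field $\F$. The real work in this section of the paper comes afterward, in estimating $\codim I^a$ and $\dim I^{a+b}$ for the augmentation ideal of a $p$-group and optimizing the choice of $a, b$ — but that is beyond the statement of this lemma.
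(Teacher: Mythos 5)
Your proposal is correct and matches the paper's approach exactly: the paper presents Lemma~\ref{lem:ideal} as an immediate special case of Proposition~\ref{prop:codim}, obtained by taking $A = I^a$, $B = I^b$, $C = I^{a+b}$ and using $I^a \cdot I^b \subseteq I^{a+b}$. Your added remarks on the degenerate cases and basis-independence are fine but not needed.
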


We'll apply this lemma to the augmentation ideal.

\begin{definition}[Augmentation ideal]
The \emph{augmentation ideal} $I\subseteq \F[G]$ is the kernel of the
natural map $\F[G]\onto \F$ defined on the basis of group elements by
$g \mapsto 1$ for all $g \in G$.
\end{definition}

The augmentation ideal is linearly spanned by the group algebra elements of
the form $g-1$ for $g \in G$. It is a standard fact that if $G$ is a
$p$-group then the augmentation ideal of $\F_p[G]$ is nilpotent, meaning that
$I^{e+1} = \Span\{ x_1 x_2 \dotsc x_{e+1} : x_i \in I \} = 0$ for some $e$
(see, e.\,g., \cite{jennings}). Taking the minimal such $e$, we have nested
subspaces $\F[G]\supsetneq I \supsetneq I^2 \supsetneq \dotsb \supsetneq
I^{e} \supsetneq I^{e+1}=0$. To get the most mileage out of
Lemma~\ref{lem:ideal}, we wish to choose $a,b$ so that  $I^a$ and  $I^b$ are
large while still keeping $I^{a+b}$ small. If $P$ is the distribution on
$\{0,\dotsc,e\}$ defined by taking $P(i)$ proportional to $\dim I^i /
I^{i+1}$, we want to find $a,b$ such that $\sum_{i<a} P(i)$, $\sum_{i < b}
P(i)$, and $\sum_{a+b \leq i} P(i)$ are all small. We therefore seek some
sort of concentration inequality in which $P(i)$ is concentrated around
values of $i$ near the middle. We will use the $p$-degrees of a $p$-group,
defined below, to prove just such a concentration inequality in
Theorem~\ref{thm:concentration}.

With each $p$-group $G$ satisfying $\abs{G}=p^n$, we associate a sequence
$(r_1,\ldots,r_\ell)$ of nonnegative integers, which we call the
\emph{$p$-degrees} of $G$, with $r_1+\cdots+r_\ell=n$. These $p$-degrees are
defined in terms of the $p$-lower central series or Jennings series of $G$;
this is a variant of its lower central series, which has the advantage of
controlling the powers of the augmentation ideal in $\F_p[G]$
(see \cite{jennings}).

\begin{definition}[{$p$-degrees, $p$-lower central series (see, e.\,g., \cite{jennings})\footnote{N.B.:
There is another standard variant of the lower central series with a very
similar name, the ``lower central exponent-$p$ series,'' defined by $H_i =
[G, H_i] H_{i-1}^{(p)}$. The difference in grading between the $p$-lower
central series we consider and the $H_i$ is crucial.}}] \label{def:pdegrees}
\ifconfversion The \emph{$p$-lower central series} $\Gamma_i = \Gamma_i(G)$
is defined by $\Gamma_1 = G$, $\Gamma_i = [G, \Gamma_{i-1}] \Gamma_{\lceil
i/p \rceil}^{(p)}$, where for any subgroup $H$, $H^{(p)}$ denotes the
subgroup generated by $\{h^p : h \in H\}$. \else The \emph{$p$-lower central
series} $\Gamma_i=\Gamma_i(G)$ is defined to be the smallest filtration
$G=\Gamma_1\supseteq \Gamma_2\supseteq \cdots$ such that
\[
[\Gamma_i,\Gamma_j]\subseteq \Gamma_{i+j} \qquad \text{ and } \qquad g\in \Gamma_i\implies g^p\in \Gamma_{ip}.
\]
Equivalently \cite[Theorem~5.5]{jennings}, we may define the $\Gamma_i$
inductively by $\Gamma_1 = G$ and $\Gamma_i = [G, \Gamma_{i-1}]
\Gamma_{\lceil i/p \rceil}^{(p)}$, where for any subgroup $H$, $H^{(p)}$
denotes the subgroup generated by $\{h^p : h \in H\}$.
\fi 
The quotient $\R_j=\Gamma_j/\Gamma_{j+1}$ is an $\F_p$-vector space (since
$\Gamma_{j+1} \supseteq \Gamma_{jp}$). The \emph{$p$-degrees} of $G$ are the
sequence of dimensions $r_j\coloneq \dim \R_j$.

When $G$ is a $p$-group, the $p$-lower central series terminates,
meaning that $\Gamma_{\ell+1} = 1$; the minimal such $\ell$ is called
the \emph{length} of the $p$-lower central series of $G$, and
$r_1+\cdots+r_\ell = n$ if $|G|=p^n$.
\end{definition}

Our results apply whenever the $p$-degrees do not decrease ``too rapidly'' or
when they have bounded variance; these two conditions are formalized as
follows.
Given a real non-negative vector $r=(r_1,\ldots,r_\ell)$ with $\sum_i r_i =
n$, let $\rho_i = r_i/n$, and let $X_r$ be the random variable that takes value
$i\in \{1,\ldots,\ell\}$ with probability $\rho_i$.

\begin{definition}[Linear expectation]
We say that a family of such vectors $r$ has \emph{linear expectation} if there exists
some universal constant $c>0$ such that $\E(X_r) \geq \ell/c$.
\end{definition}

\begin{definition}[Bounded variance]
We say that a family of such vectors $r$ has \emph{bounded
variance} if there exists some universal constant $M$ such that $\Var(X_r) \leq M$.
\end{definition}

\ifconfversion
\else
We can now state our main theorem for $p$-groups:
\fi 

\begin{theorem}[Main theorem for $p$-groups] \label{thm:pgroups}
STPP constructions in families of $p$-groups of bounded exponent cannot
achieve $\omega=2$ if they have either
\begin{enumerate}
\item \label{case:bounded_var} $p$-degrees of bounded variance, or
\item \label{case:linear-expectation} $p$-degrees of linear expectation.
\end{enumerate}
\end{theorem}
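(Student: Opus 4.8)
The plan is to verify the hypothesis of the Key Corollary~\ref{cor:key}: for a $p$-group $G$ with $|G| = p^n$ lying in such a family I want $\slicerank(\mult{G}) \le |G|^{1-\epsilon}$ with $\epsilon > 0$ depending only on the exponent bound and on the constant $M$ (resp.\ $c$) in the bounded-variance (resp.\ linear-expectation) hypothesis. The entry point is Lemma~\ref{lem:ideal} applied to the augmentation ideal $I$ of $\F_p[G]$: for all $a,b$,
\[
\slicerank(\mult{G}) \le \codim I^a + \codim I^b + \dim I^{a+b} = |G|\bigl(\Pr[S<a]+\Pr[S<b]+\Pr[S\ge a+b]\bigr),
\]
where $S$ is a random variable whose law is the distribution $P$ with $P(i)$ proportional to $\dim I^i/I^{i+1}$, using $\codim I^a = |G|\Pr[S<a]$ and $\dim I^{a+b} = |G|\Pr[S\ge a+b]$. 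So everything reduces to a concentration statement for $P$, i.e.\ to Theorem~\ref{thm:concentration}.

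To prove the latter I would invoke Jennings's theorem \cite{jennings}, which computes the Poincar\'e series of $\bigoplus_i I^i/I^{i+1}$ from the $p$-degrees $(r_1,\dots,r_\ell)$:
\[
\sum_{i\ge 0}\dim(I^i/I^{i+1})\,t^i \;=\; \prod_{j=1}^{\ell}\Bigl(\frac{1-t^{pj}}{1-t^j}\Bigr)^{r_j} \;=\; \prod_{j=1}^{\ell}\bigl(1+t^j+\dots+t^{(p-1)j}\bigr)^{r_j}.
\]
Setting $t=1$ gives $\sum_i \dim I^i/I^{i+1} = p^{\sum_j r_j}=|G|$, so $P$ is a probability distribution, and the factorization realizes $S$ as $S=\sum_{j=1}^{\ell}\sum_{k=1}^{r_j} U_{j,k}$ with the $U_{j,k}$ independent and $U_{j,k}$ uniform on $\{0,j,2j,\dots,(p-1)j\}$. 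Hence $\E S = \frac{p-1}{2}\sum_j j r_j = \frac{(p-1)n}{2}\,\E(X_r) \ge n/2$, and $S$ is a sum of $n$ independent variables whose ranges have lengths $(p-1)j$, so $\sum_{j,k}\bigl((p-1)j\bigr)^2 = (p-1)^2 n\,\E(X_r^2)$. Hoeffding's inequality then yields, for every $\beta>0$,
\[
\Pr\bigl[\,|S-\E S|\ge \beta\,\E S\,\bigr] \;\le\; 2\exp\!\Bigl(-\frac{\beta^2 n}{2}\cdot\frac{(\E X_r)^2}{\E(X_r^2)}\Bigr).
\]
The two hypotheses enter here, each in one line: if $\Var(X_r)\le M$ then $(\E X_r)^2/\E(X_r^2)=\bigl(1+\Var(X_r)/(\E X_r)^2\bigr)^{-1}\ge (1+M)^{-1}$ since $X_r\ge 1$; and if $\E(X_r)\ge \ell/c$ then, since $1\le X_r\le \ell$ forces $\E(X_r^2)\le \ell\,\E(X_r)$, we get $(\E X_r)^2/\E(X_r^2)\ge \E(X_r)/\ell\ge 1/c$. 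In either case the exponent above is $-\Omega(n)$ with a constant depending only on $\beta$ and $M$ (resp.\ $c$).

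To finish, take $a=b=\lfloor \tfrac34 \E S\rfloor$. For all large $n$ each of $\Pr[S<a]$, $\Pr[S<b]$, $\Pr[S\ge a+b]$ corresponds to a deviation of at least $\tfrac14\E S$ from the mean, hence is $\le 2\exp(-\Omega(n))$, so $\slicerank(\mult{G})\le 6\,|G|\exp(-\Omega(n))$. Since $G$ has bounded exponent, $p$ is at most the exponent bound $E$, so $|G|=p^n\le E^n$; choosing $\epsilon$ to be, say, half the ratio of the $\Omega(n)$-constant to $\log E$ gives $\slicerank(\mult{G})\le |G|^{1-\epsilon}$ for all sufficiently large $n$ (the finitely many small groups in the family are irrelevant to the asymptotic bound), and Corollary~\ref{cor:key} completes the proof. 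The crux---and the reason the two mild-looking hypotheses are both needed---is obtaining a tail bound that decays \emph{exponentially in $n$}: a Chebyshev estimate only gives decay of order $1/n$, which cannot beat $|G|^{-\epsilon}=p^{-\epsilon n}$, so one truly needs a Hoeffding/Chernoff-type bound whose exponent is $\Omega(n)$, and for that one needs precisely that the $p$-degrees do not spread out---which is exactly what bounded variance or linear expectation guarantees. Bounded exponent is then what makes the resulting $\epsilon$ uniform across the family.
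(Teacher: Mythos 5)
Your proposal is correct and follows essentially the same route as the paper: bound $\slicerank(\mult{G})$ via Lemma~\ref{lem:ideal} applied to powers of the augmentation ideal, use Jennings's theorem (Proposition~\ref{prop:augVSplcs}, which you invoke in its equivalent Poincar\'e-series form) to realize the degree distribution as a sum of independent bounded variables, apply Hoeffding, and then translate the bounded-variance and linear-expectation hypotheses into $\delta_G=\Omega(n)$ exactly as in Lemmas~\ref{lem:bounded_var} and~\ref{lem:linear-expectation}, with bounded exponent making $\epsilon$ uniform. The only (harmless) deviations are cosmetic: a different cutoff ($a=b=\tfrac34\E S$ instead of $s/3$ with the symmetry trick) and a slightly slicker one-line treatment of the linear-expectation case via $\E(X_r^2)\le \ell\,\E(X_r)$.
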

The proof is given after the next section, which shows how the $p$-degrees
control the dimension of the powers of the augmentation ideal.

As a corollary, we get an even simpler condition, which may be useful in
further applications:

\begin{corollary} \label{cor:bounded_length}
STPP constructions in families of $p$-groups of bounded exponent and with
bounded length of $p$-lower central series cannot achieve $\omega=2$.
\end{corollary}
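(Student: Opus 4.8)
The plan is to derive this directly from Theorem~\ref{thm:pgroups} by observing that bounded length of the $p$-lower central series forces the $p$-degrees to have bounded variance. Fix a family of $p$-groups of bounded exponent in which every group has $p$-lower central series of length at most some universal constant $L$. For a group $G$ in the family with $|G| = p^n$ and $p$-degrees $(r_1,\dotsc,r_\ell)$, the associated random variable $X_r$ takes values in $\{1,\dotsc,\ell\}$ with $\ell \le L$, so it is supported on an interval of length less than $L$. Hence $\abs{X_r - \E(X_r)} < L$ pointwise, and therefore $\Var(X_r) = \E\bigl[(X_r - \E(X_r))^2\bigr] < L^2$.

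The crucial point to check is that this bound is \emph{uniform} over the whole family: the estimate $\Var(X_r) < L^2$ depends only on the constant $L$ bounding the length, and not at all on $n$ (equivalently, not on $|G|$). Thus the family has $p$-degrees of bounded variance in the sense of Definition (Bounded variance), with $M = L^2$. Since the family also has bounded exponent by hypothesis, case~\eqref{case:bounded_var} of Theorem~\ref{thm:pgroups} applies, and no STPP construction in this family can achieve $\omega = 2$.

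I do not expect a genuine obstacle here; the only thing requiring a moment's care is the uniformity just mentioned, i.e.\ making sure one phrases ``bounded length'' as a single constant $L$ valid across the family rather than a per-group bound. (As an aside, one could equally invoke case~\eqref{case:linear-expectation}: since $\E(X_r) \ge 1 \ge \ell/L$, the family also has linear expectation with $c = L$, so either branch of Theorem~\ref{thm:pgroups} suffices.)
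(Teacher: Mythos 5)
Your proposal is correct and matches the paper's own proof, which likewise deduces the corollary by noting that bounded length of the $p$-lower central series forces bounded variance of the $p$-degrees (uniformly over the family) and then invoking Theorem~\ref{thm:pgroups}(\ref{case:bounded_var}); your quantitative bound $\Var(X_r) < L^2$ and the alternative via linear expectation are fine but not needed beyond that observation.
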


\begin{proof}
If the length of the $p$-lower central series is bounded, then \emph{a
fortiori} its variance is bounded, so this follows from
Theorem~\ref{thm:pgroups}(\ref{case:bounded_var}).
\end{proof}

\mysubsection{$p$-degrees and a polynomial method for $p$-groups} Given a
$p$-group $G$ with $p$-degrees $(r_1,\ldots,r_\ell)$, consider $n$ variables
labeled $x_{j,i}$ where $1\leq j\leq \ell$ and $1\leq i\leq r_j$.
\ifconfversion Let $X = \{\x^m = \prod x_{j,i}^{m_{j,i}} : (\forall i,j) 0
\leq m_{j,i} < p\}$. \else Let $X$ denote the set of monomials $\x^\m=\prod
x_{j,i}^{m_{j,i}}$ with the property that $0 \leq m_{j,i}<p$ for all $j,i$.
\fi 
We define a \emph{weighted degree} on such monomials by $\deg x_{j,i}=j$ and
thus $\deg \x^\m=\sum_{j,i} jm_{j,i}$.

\begin{proposition}[{\cite[Theorem~3.7]{jennings}}]
\label{prop:augVSplcs} Given a finite $p$-group $G$, let $I$ be the
augmentation ideal of $\F_p[G]$. With notation as above, \ifconfversion $\dim
I^k = |\{\x^\m \in X : \deg(\x^\m) \geq k\}|$. \else
 the dimension of $I^k$ is the number of monomials $\x^{\m}\in X$ with $\deg(\x^{\m})\geq k$.
\fi 
\end{proposition}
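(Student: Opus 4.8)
This is Jennings' theorem \cite{jennings}, so one may simply invoke it; here is the shape of a self-contained argument. The plan is to pass to the associated graded algebra $\mathrm{gr}\,\F_p[G]:=\bigoplus_{k\ge 0}I^k/I^{k+1}$ (whose degree-$k$ piece has dimension $\dim I^k-\dim I^{k+1}$), identify it with the restricted universal enveloping algebra of a graded restricted Lie algebra built from the $p$-lower central series, read off a monomial basis via restricted PBW, and then sum the dimensions of the homogeneous pieces. Concretely, fix $g_{j,1},\dots,g_{j,r_j}\in\Gamma_j$ whose images form an $\F_p$-basis of $\R_j=\Gamma_j/\Gamma_{j+1}$, and put $u_{j,i}=g_{j,i}-1\in I$. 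The identities $(gh-1)=(g-1)+(h-1)+(g-1)(h-1)$, the Frobenius identity $(g-1)^p=g^p-1$ (valid in characteristic $p$), and $([g,h]-1)\,hg=(g-1)(h-1)-(h-1)(g-1)$, combined with the defining relations $[\Gamma_i,\Gamma_j]\subseteq\Gamma_{i+j}$ and $g\in\Gamma_i\Rightarrow g^p\in\Gamma_{ip}$, give by induction on $j$ the inclusion $\Gamma_j-1\subseteq I^j$. The same identities show that the commutator $\overline{xy-yx}$ and the $p$-power $\overline{x^p}$ induced on $\mathrm{gr}\,\F_p[G]$ make $L:=\bigoplus_j\R_j$ (with $\R_j$ placed in degree $j$) into a graded restricted Lie algebra admitting a natural graded algebra map $u(L)\to\mathrm{gr}\,\F_p[G]$ sending the class of $g_{j,i}$ in $\R_j$ to $\bar u_{j,i}\in I^j/I^{j+1}$.

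The step I expect to be the main obstacle is showing that this map is an isomorphism, equivalently that $\Gamma_j=G\cap(1+I^j)$ for every $j$ (the identification of the $p$-lower central series with the dimension-subgroup series): this is the real content of Jennings' theorem. Granting it, the finish is bookkeeping. The map $u(L)\to\mathrm{gr}\,\F_p[G]$ is surjective because $\mathrm{gr}\,\F_p[G]$ is generated in degree $1$ (as $I$ is generated as an ideal by the elements $g-1$) and the restricted Lie relations of $L$ hold in it; it is then injective by the dimension count $\dim u(L)=p^{\dim_{\F_p}L}=p^{\sum_j r_j}=p^{\,n}=|G|=\dim\F_p[G]=\dim\mathrm{gr}\,\F_p[G]$. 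Note that the easy inclusion $\Gamma_j-1\subseteq I^j$ alone yields only the inequality $\dim I^k\le|\{\x^\m\in X:\deg(\x^\m)\ge k\}|$; it is the coincidence of total dimensions, forcing the above map to be injective rather than merely surjective, that pins down the dimension of every graded piece, and this is exactly where the nontrivial half of Jennings is used.

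Finally, by the PBW theorem for restricted enveloping algebras, $u(L)$ has the homogeneous basis $\{\prod_{j,i}\bar u_{j,i}^{\,m_{j,i}}:0\le m_{j,i}<p\}$, where the basis element indexed by $\m$ is homogeneous of degree $\sum_{j,i}j\,m_{j,i}=\deg(\x^\m)$; these monomials are in bijection with $X$, so $\dim(I^k/I^{k+1})=|\{\x^\m\in X:\deg(\x^\m)=k\}|$. Since $I$ is nilpotent the filtration is finite, whence $\dim I^k=\sum_{k'\ge k}\dim(I^{k'}/I^{k'+1})=|\{\x^\m\in X:\deg(\x^\m)\ge k\}|$, which is the claim.
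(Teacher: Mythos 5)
Your proposal is correct and matches the paper's treatment: the paper likewise simply cites Jennings (Theorem 3.2) for the basis of $I^k$ and points to Quillen for the conceptual proof, and your sketch via the associated graded algebra, the restricted enveloping algebra $u(L)$, surjectivity plus the dimension count $p^{n}$, and restricted PBW is exactly that Quillen argument. The one step you wave at --- verifying that the induced bracket and $p$-power operations actually satisfy the restricted Lie algebra axioms on $L$, which is what licenses the PBW dimension count --- is standard and is covered by the citation.
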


\begin{proof}
For each $j$, choose $g_{j,i}\in \Gamma_j\subseteq G$ descending to a basis for
$\R_j$. Set $x_{j,i}=g_{j,i}-1\in \F_p[G]$, so every monomial $\x^\m$ defines
an element of $\F_p[G]$. Then a basis for $I^k$ is given by those $\x^\m\in
X$ with $\deg \x^\m\geq k$ \cite[Theorem 3.2]{jennings} (see \cite{Quillen}
for a conceptual explanation). In particular, the degree of a product of such
basis elements is the sum of the degrees of the basis elements, as would be
expected for ordinary degree.
\end{proof}

We'll use Hoeffding's Inequality and Proposition~\ref{prop:augVSplcs} to
estimate the dimension of $I^k$:

\begin{theorem}
\label{thm:concentration} Given a $p$-group $G$ of order $p^n$ with
$p$-degrees $(r_1,\ldots,r_\ell)$, define
\begin{equation}
\label{eq:delta-G}
\delta_G=\frac{(\sum_j j r_j)^2}{\sum_j j^2 r_j}.
\end{equation}
Then $\slicerank(\mult{G})\leq 3 \abs{G}e^{-\delta_G/18}$. In particular,
$\slicerank(\mult{G}) \leq p^n/e^{\Omega(\delta_G)}$.
\end{theorem}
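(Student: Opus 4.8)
The plan is to apply Lemma~\ref{lem:ideal} to the augmentation ideal $I \subseteq \F_p[G]$, use Proposition~\ref{prop:augVSplcs} to rewrite the dimensions of powers of $I$ as counts of monomials by weighted degree, and then estimate those counts with Hoeffding's inequality. First note that $|X| = p^n = \dim \F_p[G]$, so Proposition~\ref{prop:augVSplcs} gives both $\dim I^k = \#\{\x^\m \in X : \deg(\x^\m)\ge k\}$ and $\codim I^k = \#\{\x^\m \in X : \deg(\x^\m) < k\}$. Equivalently, drawing a monomial $\x^\m$ uniformly from $X$ — that is, drawing each exponent $m_{j,i}$ independently and uniformly from $\{0,1,\dots,p-1\}$ — we have $\codim I^k = p^n \Pr[\deg(\x^\m) < k]$ and $\dim I^k = p^n \Pr[\deg(\x^\m) \ge k]$.

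Next I would analyze $\deg(\x^\m) = \sum_{j,i} j\,m_{j,i}$, a sum of $n$ independent random variables in which the summand $j\,m_{j,i}$ takes values in $[0, j(p-1)]$. Its mean is $\mu := \frac{p-1}{2}\sum_j j r_j$, and the sum of squared interval lengths is $(p-1)^2 \sum_j j^2 r_j$. Hoeffding's inequality then gives, for any $t>0$,
\[
\Pr[\deg(\x^\m) \le \mu - t] \le \exp\!\Big(\tfrac{-2t^2}{(p-1)^2\sum_j j^2 r_j}\Big) \qquad\text{and}\qquad \Pr[\deg(\x^\m) \ge \mu + t] \le \exp\!\Big(\tfrac{-2t^2}{(p-1)^2\sum_j j^2 r_j}\Big).
\]
Now apply Lemma~\ref{lem:ideal} with $a = b = \lceil 2\mu/3 \rceil$. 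Since $a - 1 < 2\mu/3$, the quantities $\codim I^a = \codim I^b$ are bounded by the lower tail at deviation at least $\mu/3$; since $a+b \ge 4\mu/3$, the quantity $\dim I^{a+b}$ is bounded by the upper tail at deviation at least $\mu/3$. Substituting $t = \mu/3$ and $\mu = \frac{p-1}{2}\sum_j j r_j$ makes the factors of $(p-1)^2$ cancel, and the exponent becomes exactly $-\tfrac{1}{18}\cdot\frac{(\sum_j j r_j)^2}{\sum_j j^2 r_j} = -\delta_G/18$. Hence each of $\codim I^a$, $\codim I^b$, $\dim I^{a+b}$ is at most $|G|e^{-\delta_G/18}$, and Lemma~\ref{lem:ideal} gives $\slicerank(\mult{G}) \le 3|G|e^{-\delta_G/18}$. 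For the ``in particular'' statement, write $3e^{-\delta_G/18} = e^{\ln 3 - \delta_G/18}$ and combine with the trivial bound $\slicerank(\mult{G}) \le \dim \F_p[G] = |G|$ in the bounded range $\delta_G \le 18\ln 3$; in all cases this yields $\slicerank(\mult{G}) \le |G|/e^{\Omega(\delta_G)}$.

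The choice $a=b=\lceil 2\mu/3 \rceil$ — which balances all three error terms in Lemma~\ref{lem:ideal} at a common deviation $\mu/3$ — is the crux of the argument; after that, everything is a direct Hoeffding computation, and I do not expect a genuine obstacle. The one point requiring care is integrality: $a,b$ must be positive integers, and one must check that rounding $2\mu/3$ up does not spoil either tail estimate (it only enlarges the effective deviation in both directions, so it does not). It is also worth noting that the statement is vacuous unless $\delta_G > 18\ln 3$, and that in this regime $\sum_j j r_j \ge \delta_G$ (since $\sum_j j^2 r_j \ge \sum_j j r_j$), so that $\mu > 9$ and indeed $a = b \ge 1$.
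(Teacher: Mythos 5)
Your proof is correct and follows essentially the same route as the paper's: Lemma~\ref{lem:ideal} applied to powers of the augmentation ideal with $a=b$ at one third of the maximum degree $s=(p-1)\sum_j j r_j$ (your $\lceil 2\mu/3\rceil$), Proposition~\ref{prop:augVSplcs} to convert dimensions into monomial counts, and Hoeffding at deviation $s/6$, yielding the same constant $\delta_G/18$. The only (harmless) differences are that the paper uses the symmetry of the degree distribution about $s/2$ to reduce all three terms to the single upper-tail quantity $3\dim I^{2s/3}$, whereas you bound the two codimension terms directly with the lower tail, and you treat the integrality of $a,b$ more carefully than the paper does.
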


\begin{proof}
Let $s = (p-1)\cdot\sum_j jr_j$ be the maximum degree of any $\x^\m\in X$.
Applying Lemma~\ref{lem:ideal} with $a=b=s/3$ gives the bound
$\slicerank(\mult{G}) \leq 2\codim I^{s/3}+\dim I^{2s/3}$. The distribution
of degrees in Proposition~\ref{prop:augVSplcs} is symmetric about $s/2$, so
we have $\codim I^{\frac{s}{2}-t}=\dim I^{\frac{s}{2}+t}$. In particular,
this gives $\slicerank(\mult{G}) \leq 3\dim I^{2s/3}$. It remains to bound
$\dim I^{2s/3}$ using Proposition~\ref{prop:augVSplcs}.

To generate a random monomial $\x^\m\in X$, we may independently choose
$m_{j,i}\in \{0,\ldots,p-1\}$ and form the product $\x^\m=\prod
x_{j,i}^{m_{j,i}}$. Since $\deg \x^\m=\sum (\deg x_{j,i})m_{j,i}=\sum j
m_{j,i}$, we can express $\deg \x^\m$ as the sum of independent random
variables $\randm_{j,i}$, where $\randm_{j,i}=\deg x_{j,i}^{m_{j,i}}$ is a
uniform random variable taking values in $\{0,j, 2j, \ldots, {(p-1)j}\}$.
Each $\randm_{j,i}$ has a bounded range of $j(p-1)$ and their sum has
expectation $s/2$. So by Hoeffding's Inequality, we obtain for all $t > 0$ that
\[\dim I^{\frac{s}{2}+t}=\big|\big\{\x^\m \in X: \deg \x^\m \ge \frac{s}{2} +t \big\}\big|\leq
  \frac{\abs{G}}{e^{2t^2/(\sum_j j^2(p-1)^2r_j)}}\]
using that $\abs{X}=\abs{G}$. Taking $t=s/6$, we find $\dim I^{2s/3}\leq
\abs{G} / e^{\delta_G / 18}$, proving the proposition.
\end{proof}
We now apply these bounds on four important examples. In all of these
examples, we think of $p$ as fixed and $n$ as growing.

\begin{example}[Vector spaces]
For $G=(\Z/p\Z)^n$ the only nonzero $p$-degree is $r_1 = n$. Therefore
$\delta_G=r_1^2 / r_1 = r_1=n$, and the bound becomes $p^n/e^{\Omega(n)}$.
This agrees with the bounds of \cite{EG, BCCGNSU} (up to the constants, which
we have not tried to optimize). But regardless of the constant, this is
enough to rule out getting $\omega=2$ via STPP constructions in these groups.
\end{example}

\begin{example} \label{ex:basichomocyclic}
For $G=\Z/{p^n}\Z$ the nonzero $p$-degrees are $r_1 = r_p = r_{p^2} = \dotsb =
r_{p^{n-1}} =1$, so $\delta_G=(\sum_{k=0}^{n-1} p^k)^2 / (\sum_{k=0}^{n-1}
p^{2k})=\Omega(1)$. Therefore the bound we obtain is only
$p^n/e^{\Omega(1)}$. (Another way to see this is that the distribution of
degrees of $\x^\m\in X$ in this case is \emph{uniform} on
$\{0,1,\ldots,p^n-1\}$, so no concentration of measure occurs.) Note that
this bound is nevertheless \emph{optimal} because $\Z/{p^n}\Z$ has a {\em
border} multiplicative matching of size at least $p^n/2$ (see Section
\ref{sec:tight}), and indeed has slice rank $p^n$ (see
Section~\ref{sec:slice_cyclic}).
\end{example}

\begin{example} \label{ex:homocyclic}
For $G=(\Z/{p^k}\Z)^m$ the nonzero $p$-degrees are $r_1 = r_p = r_{p^2} = \dotsb
= r_{p^{k-1}} = m$, so $\delta_G=\Omega(m)$. The resulting bound of
$p^{km}/e^{\Omega(m)}$ agrees with the bound proved in \cite{BCCGNSU} up to
the constant in the $\Omega(\cdot)$. Just as in the previous example, the
factor $e^{\Omega(m)}$ here is sharp. Note that, as for $\Z/{p^k}\Z$ itself,
this does \emph{not} rule out proving $\omega=2$ via an STPP construction in
these groups, as long as $k$ is growing.
\end{example}

\begin{example}[Upper unitriangular matrices] \label{ex:UT}
Let $G$ be the group of $m \times m$ upper unitriangular matrices over
$\F_p$. Then $\abs{G}= p^{(m^2-m)/2}$ and the nonzero $p$-degrees are
$r_1=m-1, r_2={m-2},\ldots,r_{m-1}=1$. Therefore
  \ifconfversion
  $\delta_G=\left(\sum_{j=1}^m j(m-j)\right)^2 / \left(\sum_{j=1}^m
      j^2(m-j)\right) = \Omega(m^2).$
      \else
  \[\delta_G=\frac{\big(\sum_{j=1}^m j(m-j)\big)^2}{\sum_{j=1}^m
      j^2(m-j)} = \Omega(m^2).\]
      \fi 
We obtain a bound on slice rank of the form
\ifconfversion
$\slicerank(\mult{G})\leq p^{(m^2-m)/2} e^{-\Omega(m^2)}$,
\else
\[\slicerank(\mult{G})\leq \frac{p^{(m^2-m)/2}}{e^{\Omega(m^2)}}, \]
\fi 
which indeed rules out obtaining $\omega = 2$ via STPP constructions in these
groups.
\end{example}

\mysubsection{Proof of the main $p$-group theorem
(Theorem~\ref{thm:pgroups})} Theorem~\ref{thm:concentration} gives
\[\slicerank(\mult{G}) \leq p^n/e^{\Omega(\delta_G)}\] for any $p$-group $G$.
All that remains is to show that this bound is of the form $|G|^{1 -
\Omega(1)}$ under the hypotheses of Theorem~\ref{thm:pgroups}. Since the
exponent is bounded, $p$ is bounded, and the latter form is equivalent to
saying that $\delta_G \geq \Omega(n)$. Lemmas~\ref{lem:bounded_var} and
\ref{lem:linear-expectation} will cover the two hypotheses of the theorem,
respectively, thus proving the theorem.

\begin{lemma}[Bounded variance] \label{lem:bounded_var}
Given $r=(r_1,\ldots,r_\ell)$ with $n=\sum r_i$ and bounded variance,
\ifconfversion $\delta_G \geq \Omega(n/\ell)$ (see \eqref{eq:delta-G}). \else
the expression $\delta_G=\frac{(\sum_i i r_i)^2}{\sum_i i^2 r_i}$ from
$\eqref{eq:delta-G}$ satisfies $\delta_G \geq \Omega(n)$.
\fi 
\end{lemma}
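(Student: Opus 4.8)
The plan is to recognize $\delta_G$ as, up to a factor of $n$, the ratio $(\E X_r)^2/\E(X_r^2)$ for the random variable $X_r$ defined just before the statement, and then bound this ratio below by a positive constant using the bounded-variance hypothesis together with the trivial observation that $X_r$ is supported on positive integers.

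First I would translate the two sums in $\delta_G$ into moments of $X_r$. Recall $\rho_i = r_i/n$ and that $X_r$ takes the value $i$ with probability $\rho_i$; since $\sum_i r_i = n$ this is genuinely a probability distribution. Dividing the numerator of $\delta_G$ by $n^2$ and the denominator by $n$, we get $\frac1n\sum_i i\,r_i = \sum_i i\rho_i = \E(X_r)$ and $\frac1n\sum_i i^2 r_i = \sum_i i^2\rho_i = \E(X_r^2)$, so that
\[
\delta_G \;=\; \frac{(n\,\E X_r)^2}{n\,\E(X_r^2)} \;=\; n\cdot\frac{(\E X_r)^2}{\E(X_r^2)}.
\]
Next I would write $\mu = \E(X_r)$ and expand $\E(X_r^2) = \mu^2 + \Var(X_r)$. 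Because $X_r$ takes values in $\{1,\dots,\ell\}$ we have $\mu \ge 1$, hence $\mu^2\Var(X_r) \ge \Var(X_r)$ and therefore $\frac{\mu^2}{\mu^2 + \Var(X_r)} \ge \frac{1}{1+\Var(X_r)}$. Combining this with the hypothesis $\Var(X_r) \le M$ gives
\[
\delta_G \;=\; n\cdot\frac{\mu^2}{\mu^2 + \Var(X_r)} \;\ge\; \frac{n}{1+\Var(X_r)} \;\ge\; \frac{n}{1+M},
\]
and since $M$ is a universal constant this is $\Omega(n)$, which is what the lemma asserts.

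Honestly, I do not expect a real obstacle here: once $\delta_G$ is rewritten in terms of $\E X_r$ and $\E X_r^2$, the estimate is two lines. The only points that warrant a moment's care are the normalization in the translation step (making sure the factors of $n$ are tracked correctly) and the elementary bound $\E X_r \ge 1$; note also that the bounded-variance hypothesis is used rather weakly—any bound of the form $\Var(X_r) = O\big((\E X_r)^2 + 1\big)$ would suffice. As a sanity check, in the degenerate case where exactly one $p$-degree is nonzero, $X_r$ is constant, $\Var(X_r) = 0$, and $\delta_G = n$ exactly, matching the vector-space example above.
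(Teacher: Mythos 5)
Your proof is correct and follows essentially the same route as the paper: both rewrite $\delta_G/n$ as $(\E X_r)^2/\E(X_r^2)$, expand the denominator as $\Var(X_r)+(\E X_r)^2$, and combine $\Var(X_r)\le M$ with $\E X_r\ge 1$ to obtain the bound $\delta_G\ge n/(1+M)=\Omega(n)$. The only cosmetic difference is how the final inequality is organized (your direct comparison $\mu^2/(\mu^2+\Var)\ge 1/(1+\Var)$ versus the paper's monotonicity-in-$a$ argument), which is immaterial.
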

\begin{proof}
Let $\rho_i = r_i/n$ be the probability distribution associated to $r$, and
let $X=X_r$ be the random variable that takes value $i$ with probability
$\rho_i$. Then
\[
\Var(X) = \E[X^2] - \E[X]^2 = \big(\sum_i i^2 \rho_i\big) - \big(\sum_i i \rho_i\big)^2.
\]

For the remainder of the proof we find it useful to introduce a
``scale-free'' version of $\delta_G$, namely $\delta'_G = \delta_G / n$. The
conclusion of the lemma is equivalent to $\delta'_G \geq \Omega(1)$, since
this holds if and only if $\delta_G \geq \Omega(n)$. To see that $\delta'_G$
is scale-free, we may use the fact that $\sum_i r_i = n$ to rewrite it as
\begin{equation} \label{eq:delta-prime-G}
\delta'_G = \frac{(\sum_i i r_i)^2}{(\sum_i i^2 r_i)(\sum_i r_i)}.
\end{equation}
This expression makes it clear that if we rescale all of the $r_i$ by some
factor $\alpha$, the quantity $\delta'_G$ is unchanged.

Now, since $\delta'_G$ is scale-free, the quantity remains the same if we
replace each $r_i$ with $\rho_i = r_i/n$. Thus we have
\[
\delta'_G = \frac{(\sum_i i r_i)^2}{(\sum_i i^2 r_i)(\sum_i r_i)}
= \frac{(\sum_i i \rho_i)^2}{(\sum_i i^2 \rho_i)(\sum_i \rho_i)} =
\frac{(\sum_i i \rho_i)^2}{(\sum_i i^2 \rho_i)} =
\frac{(\sum_i i \rho_i)^2}{\Var(X) + \left(\sum_i i \rho_i \right)^2}.
\]
As we have assumed bounded variance, there is some universal constant $M \geq
0$ such that $\Var(X) \leq M$, and we are left with
\[
\delta'_G \geq \frac{(\sum_i i \rho_i)^2}{M + \left(\sum_i i \rho_i \right)^2}.
\]
Let $a = \left(\sum_i i \rho_i\right)^2$, so our bound is $\delta'_G \geq
a/(a + M)$. This function is a non-decreasing function of $a$: its derivative
is $M/(a+M)^2$, which is non-negative since $M$ is. Thus a lower bound on $a$
yields a lower bound on $\delta'_G$; as $a = \left(\sum_i i \rho_i\right)^2
\geq (\sum_i \rho_i)^2 = 1$, we get $\delta'_G \geq 1/(1+M) \geq \Omega(1)$,
as desired.
\end{proof}

\begin{lemma}[Linear expectation] \label{lem:linear-expectation}
Given $r=(r_1,\ldots,r_\ell)$ with $n=\sum r_i$ and linear expectation,
the expression $\delta_G = \frac{(\sum_i i r_i)^2}{\sum_i i^2 r_i}$ from
\eqref{eq:delta-G} satisfies $\delta_G \geq \Omega(n)$.
\end{lemma}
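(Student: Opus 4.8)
The plan is to reuse the ``scale-free'' reformulation introduced in the proof of Lemma~\ref{lem:bounded_var}. Writing $\rho_i = r_i/n$ and letting $X = X_r$ be the random variable taking value $i$ with probability $\rho_i$, I would first observe that
\[
\delta'_G := \frac{\delta_G}{n} = \frac{(\sum_i i r_i)^2}{(\sum_i i^2 r_i)(\sum_i r_i)} = \frac{(\sum_i i\rho_i)^2}{\sum_i i^2 \rho_i} = \frac{\E[X]^2}{\E[X^2]},
\]
so that the conclusion $\delta_G \geq \Omega(n)$ is equivalent to the scale-free statement $\delta'_G \geq \Omega(1)$.

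The key step is the elementary pointwise bound: since $X$ is supported on $\{1,\ldots,\ell\}$, we have $X^2 \leq \ell X$ with probability $1$, hence $\E[X^2] \leq \ell\, \E[X]$. Plugging this into the expression for $\delta'_G$ gives
\[
\delta'_G = \frac{\E[X]^2}{\E[X^2]} \geq \frac{\E[X]^2}{\ell\, \E[X]} = \frac{\E[X]}{\ell}.
\]
Now I would invoke the linear expectation hypothesis, which asserts the existence of a universal constant $c > 0$ with $\E[X] = \E(X_r) \geq \ell/c$. This immediately yields $\delta'_G \geq 1/c = \Omega(1)$, and therefore $\delta_G = n\,\delta'_G \geq n/c = \Omega(n)$, as required.

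There is no real obstacle here: the only thing to notice is the pointwise inequality $X^2 \leq \ell X$, which exploits that the support of $X$ has maximum $\ell$ rather than being controlled by the variance. (Contrast with the bounded-variance case, where one instead bounds $\E[X^2] = \Var(X) + \E[X]^2$ and uses $\E[X] \geq 1$.) One could equivalently phrase the argument without probabilistic language as $\sum_i i^2 r_i \leq \ell \sum_i i r_i$ together with $(\sum_i i r_i)/n \geq \ell/c$, but the random-variable formulation matches the notation already set up for the ``linear expectation'' definition and for Lemma~\ref{lem:bounded_var}.
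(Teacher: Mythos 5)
Your proof is correct and rests on exactly the same two ingredients as the paper's own argument---the pointwise bound $i \le \ell$ (equivalently $\sum_i i^2 r_i \le \ell \sum_i i r_i$, i.e.\ $\E[X^2]\le \ell\,\E[X]$) and the linear-expectation hypothesis $\sum_i i r_i \ge \tfrac{\ell}{c}\sum_i r_i$---so it is essentially the same proof; the paper merely packages these as a term-by-term positivity argument for the double sum $\sum_{i,j}(ij - j^2/c)\,r_i r_j$, whereas your direct chaining of the two inequalities is, if anything, a cleaner presentation. Both arrive at the same explicit bound $\delta_G \ge n/c$.
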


As we observe in Section~\ref{sec:future}, this threshold is sharp in that
when the $r_i$ are proportional to $1/i$ (and thus $\E(X_r)\approx \ell/\log(\ell)$), we get $\delta_G = \Theta(n / \log
\ell)$, which is not $\Omega(n)$ unless $\ell$ is bounded.

\begin{proof}

By assumption there exists a universal constant $c>0$ such that
$\E(X_r)=(\sum ir_i)/(\sum r_i)$ is at least $\ell/c$. We will show that
$\delta_G\geq n/c$, or in other words that $(\sum_j jr_j)^2 \ge (\sum_i
r_i/c)(\sum_j j^2 r_j)$. Rewriting, our goal is to show that \ifconfversion
$\sum_{i=1}^\ell \sum_{j = 1}^\ell  (ij - j^2/c)r_ir_j \ge 0$. \else
\[\sum_{i=1}^\ell \sum_{j = 1}^\ell  (ij - j^2/c)r_ir_j \ge 0.\]
\fi 
Rewrite the sum as $\sum_j t_jr_j$, where $t_j = \sum_i (ij - j^2/c)r_i$. Since $j\leq \ell$, we have $t_j\geq \sum_i(ij-\ell j/c)r_i$, so $t_j/j\geq \sum_i ir_i - \frac{\ell}{c}\sum_i r_i$. But our assumption of linear expectation states precisely that $\sum_i i r_i\geq \frac{\ell}{c}\sum_i r_i$, so we have $t_j/j\geq 0$. We conclude that $t_j\geq 0$ and thus $\sum_j t_jr_j\geq 0$ as desired.
\end{proof}

Finally, although Corollary~\ref{cor:bounded_length} (the case of bounded
length) follows from the case of bounded variance
(Theorem~\ref{thm:pgroups}(\ref{case:bounded_var})), we record here an even
simpler proof of this corollary, which gives a more exact dependence on the
length $\ell$.

\begin{lemma}[Bounded length] \label{lem:bounded_length}
Given $r=(r_1,\ldots,r_\ell)$ with $n=\sum r_i$, \ifconfversion then
$\delta_G \geq \Omega(n/\ell)$ (see \eqref{eq:delta-G}). \else the expression
$\delta_G=\frac{(\sum_i i r_i)^2}{\sum_i i^2 r_i}$ from $\eqref{eq:delta-G}$
satisfies $\delta_G \geq \Omega(n/\ell)$.
\fi 
\end{lemma}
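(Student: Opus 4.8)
The plan is to bound the denominator $\sum_i i^2 r_i$ crudely using the fact that every index $i$ appearing in the sum satisfies $i \le \ell$. Writing $\sum_i i^2 r_i = \sum_i i \cdot (i r_i) \le \ell \sum_i i r_i$, we immediately get
\[
\delta_G = \frac{(\sum_i i r_i)^2}{\sum_i i^2 r_i} \ge \frac{(\sum_i i r_i)^2}{\ell \sum_i i r_i} = \frac{\sum_i i r_i}{\ell}.
\]
So the problem reduces to giving a linear-in-$n$ lower bound on $\sum_i i r_i$.

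The second step is to observe that the indices run over $i \in \{1, \ldots, \ell\}$, so $i \ge 1$ for every term, whence $\sum_i i r_i \ge \sum_i r_i = n$. Combining the two displays yields $\delta_G \ge n/\ell$, which is exactly the claimed bound $\delta_G \geq \Omega(n/\ell)$ (indeed with constant $1$).

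There is no real obstacle here; the only thing to be slightly careful about is that the summation index genuinely starts at $1$ (so that $i \ge 1$ and $i \le \ell$ are both available), which holds by the definition of the $p$-degrees and of $X_r$. This argument is strictly weaker than Lemma~\ref{lem:bounded_var} when $\ell$ is allowed to grow, but it is recorded because it makes the dependence on the length $\ell$ completely explicit and transparent, and in particular recovers Corollary~\ref{cor:bounded_length} with an elementary one-line estimate.
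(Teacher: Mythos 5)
Your proof is correct and uses exactly the same two estimates as the paper's own argument, namely $\sum_i i^2 r_i \le \ell \sum_i i r_i$ (from $i \le \ell$) and $\sum_i i r_i \ge \sum_i r_i = n$ (from $i \ge 1$), just applied sequentially rather than multiplied together. There is nothing to add.
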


This bound is tight up to a $\log^2 \ell$ factor, as can be seen for $r_i$
proportional to $1/i^2$ (see Section~\ref{sec:future}).

\begin{proof} Since
the $r_i$ are nonnegative, note that
\[
\sum_i r_i \leq \sum_i i r_i.
\]
Also, since every $i$ is at most $\ell$, we have
\[
\sum_i i^2 r_i \leq \ell \sum_i i r_i.
\]
Putting these two together, we find that
\[
\big(\sum_i r_i\big)\big(\sum_i i^2 r_i\big) \leq \big(\sum_i i r_i\big)^2 \ell.
\]
Finally, using the fact that $\sum_i r_i = n$ and rearranging, we obtain
$\delta_G \geq \Omega(n /\ell)$.
\end{proof}

\mysubsection{General nilpotent groups: extending from normal subgroups}
Recall that a finite group is nilpotent if and only if it is a direct product
of groups of prime power order, which are then its Sylow $p$-subgroups.
We say that a family of finite nilpotent groups has \emph{bounded variance}
(respectively, \emph{linear expectation}) if there is some universal constant $M$
(respectively, $c>0$) such that for each of its Sylow $p$-subgroups the $p$-degrees
have variance bounded by $M$ (respectively, have expectation at least $\ell/c$,
where $\ell$ is the length of the $p$-central series).

\begin{theorem}[Main theorem for nilpotent groups] \label{thm:nilpotent}
STPP constructions in families of nilpotent groups $G$ cannot achieve
$\omega=2$ if they have bounded exponent and either
\begin{enumerate}
\item \label{case:bounded_var_general} they have bounded variance, or
\item \label{case:linear-expectation_general} they have linear expectation.
\end{enumerate}
\end{theorem}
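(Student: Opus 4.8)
The plan is to reduce to the $p$-group case (Theorem~\ref{thm:pgroups}), using the structure theorem for nilpotent groups together with the normal-subgroup extension tool (Lemma~\ref{lem:normal}), and then to invoke the Key Corollary~\ref{cor:key}.

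First I would use the fact, recalled above, that a finite nilpotent group $G$ in the family is the internal direct product $G = P_1 \times \cdots \times P_k$ of its Sylow subgroups, where $P_i$ is a $p_i$-group for distinct primes $p_i$. Since the family has bounded exponent, say $\exp(G)\le N_0$ for every $G$, Cauchy's theorem forces each $p_i$ to divide $\exp(G)$; hence $k$ is bounded (there are only finitely many primes at most $N_0$), and moreover $\exp(P_i)\mid \exp(G)\le N_0$, so each of the finitely many families of Sylow $p$-subgroups obtained from our family again has bounded exponent. By the way the hypotheses were transferred to nilpotent families, each such Sylow family also has bounded variance (respectively, linear expectation) with a uniform constant. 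Therefore the proof of Theorem~\ref{thm:pgroups} --- that is, Theorem~\ref{thm:concentration} together with Lemma~\ref{lem:bounded_var} or Lemma~\ref{lem:linear-expectation} --- yields a single constant $\epsilon>0$, depending only on $N_0$ and the uniform variance/expectation constant, such that
\[
\slicerank_{\F_{p_i}}(\mult{P_i}) \le |P_i|^{1-\epsilon}
\]
for every $i$ and every $G$ in the family.

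Next, $P_i$ is a normal (indeed direct) factor of $G$, so Lemma~\ref{lem:normal} lifts the slice-rank bound from $P_i$ to $G$ over the \emph{same} field $\F_{p_i}$, giving a bound of the shape $\slicerank(\mult{G}) \le [G:P_i]\cdot\slicerank(\mult{P_i})$; concretely, for $G = P_i\times H$ one tensors any slice decomposition of $\mult{P_i}$ with the decomposition of $\mult{H}$ into the $|H|$ rank-one slices $h\mapsto \delta_{h_1h_2,\,h}$ and $h\mapsto\delta_{h_3,\,h^{-1}}$. Now I would pick the index $i$ with $|P_i|$ largest; since $|G| = \prod_j |P_j|$ with $k$ bounded, pigeonhole gives $|P_i|\ge |G|^{1/k}$. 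Combining over $\F_{p_i}$,
\[
\slicerank(\mult{G}) \le \frac{|G|}{|P_i|}\cdot|P_i|^{1-\epsilon} = |G|\cdot|P_i|^{-\epsilon} \le |G|^{1-\epsilon/k} = |G|^{1-\Omega(1)}.
\]
Since this holds over a field, the Key Corollary~\ref{cor:key} shows that no STPP construction in the family can prove $\omega=2$, which is both parts of the theorem.

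The one genuinely delicate point --- and the place where bounded exponent is used beyond what is already inherited by the individual Sylow subgroups --- is this final combination. The augmentation-ideal method produces a nontrivial slice-rank bound for $G$ only over the characteristic $p_i$ of \emph{one} Sylow subgroup; over any other characteristic $\F_p[G]$ becomes semisimple and the method gives nothing useful. So one must ensure that a single Sylow subgroup already captures a constant fraction of $\log|G|$, which is exactly what the bound on the number of primes dividing $|G|$ provides. Without bounded exponent the dominant Sylow subgroup could be a vanishing fraction of $G$ and the estimate would degrade to the useless $|G|^{1-o(1)}$.
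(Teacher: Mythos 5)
Your overall architecture is the same as the paper's: decompose $G=P_1\times\cdots\times P_k$ into Sylow subgroups, use bounded exponent to force only $O(1)$ primes so that the largest Sylow factor satisfies $|P_i|\ge |G|^{\Omega(1)}$ (your pigeonhole is a simpler route to the same fact the paper obtains via the prime-counting function), note that the variance/expectation hypotheses are by definition hypotheses on the Sylow subgroups so that the $p$-group theorem applies to $P_i$ with uniform constants, lift the bound from $P_i$ to $G$, and finish with Corollary~\ref{cor:key}. The one step that is not right as written is the lift. Lemma~\ref{lem:normal} does \emph{not} say $\slicerank(\mult{G})\le |G/N|\cdot\slicerank(\mult{N})$; it gives $\slicerank(\mult{G})\le |G/N|\,(\codim I+\codim J+\dim IJ)$ only for a subspace $I$ and a \emph{characteristic right ideal} $J$ of $\F[N]$, and the paper explicitly records the black-box inequality $\slicerank(G)\le\slicerank(N)\,|G/N|$ as open even for normal subgroups. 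Your fallback tensoring argument is also not airtight as stated: if the optimal slice decomposition of $\mult{P_i}$ contains terms of all three types, then multiplying a term of the form $f(x_P,z_P)g(y_P)$ by a fixed $z$-direction slice $\delta_{x_Hy_H,\,h}\,\delta_{z_H,\,h^{-1}}$ of $\mult{H}$ produces a summand that is not a slice term in the grouped variables (the $\delta_{x_Hy_H,h}$ factor couples $y$ to $x$ while $f$ couples $x$ to $z$), so one cannot tensor \emph{any} decomposition of $\mult{P_i}$ against a single fixed slicing of $\mult{H}$.

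Both issues are easily repaired, and either repair recovers the paper's proof. Repair (i): for each type of term in the decomposition of $\mult{P_i}$, slice $\mult{H}$ in the matching direction (the tensor $\mult{H}$ has, for each of the three directions, a decomposition into $|H|$ rank-one terms all of that single type); the resulting products are genuine slice terms, giving $\slicerank(\mult{P_i\times H})\le |H|\cdot\slicerank(\mult{P_i})$ for direct factors, which is all you need. Repair (ii), which is what the paper does: do not use $\slicerank(\mult{P_i})$ as a black box; the $p$-group bound comes from Lemma~\ref{lem:ideal} applied to powers $I^a, I^b$ of the augmentation ideal of $\F_{p_i}[P_i]$, which are characteristic ideals satisfying $\codim I^a+\codim I^b+\dim I^{a+b}\le |P_i|^{1-\Omega(1)}$, so Lemma~\ref{lem:normal} applies verbatim and yields $\slicerank_{\F_{p_i}}(\mult{G})\le |G/P_i|\cdot|P_i|^{1-\Omega(1)}$. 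With either fix, your concluding estimate $\slicerank(\mult{G})\le |G|\cdot|P_i|^{-\epsilon}\le |G|^{1-\epsilon/k}$ and the appeal to Corollary~\ref{cor:key} are exactly as in the paper, and your closing remark about where bounded exponent enters (a single characteristic must capture a constant fraction of $\log|G|$) matches the paper's reasoning.
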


As in the case of $p$-groups, we have the following easily-applied corollary:

\begin{corollary}
STPP constructions in families of nilpotent groups of bounded exponent and
bounded nilpotency class cannot achieve $\omega=2$.
\end{corollary}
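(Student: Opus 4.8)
The plan is to deduce this from Theorem~\ref{thm:nilpotent}(\ref{case:bounded_var_general}): it suffices to show that a family of nilpotent groups of bounded exponent and bounded nilpotency class automatically has bounded variance in the sense defined just before that theorem. Since for a $p$-group whose $p$-lower central series has length $\ell$ the random variable $X_r$ is supported on $\{1,\dots,\ell\}$, we have $\Var(X_r) \le (\ell-1)^2/4$ (Popoviciu's inequality), so everything reduces to the purely group-theoretic claim that \emph{a $p$-group of nilpotency class at most $c$ and exponent dividing $p^e$ has $p$-lower central series of length bounded by a function of $c$, $p$, and $e$}. This length bound is the only point with any content; the rest is bookkeeping.

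First I would reduce to Sylow subgroups. A finite nilpotent group $G$ is the direct product $\prod_p G_p$ of its Sylow $p$-subgroups; the nilpotency class of a direct product is the maximum of the classes of the factors, and the exponent is the $\lcm$ of their exponents. Hence if the family has nilpotency class at most $c$ and exponent at most $E$, then only the finitely many primes $p \le E$ occur, and each $G_p$ has class at most $c$ and exponent $p^{e_p}$ with $p^{e_p} \le E$, so $e_p \le \log_2 E$. It therefore suffices to bound, uniformly over this finite set of primes, the length $\ell$ of the $p$-lower central series of a $p$-group with bounded class and bounded $e_p$.

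For the length bound I would invoke Jennings' description of the $p$-lower central series, $\Gamma_i(G) = \prod_{j p^k \ge i} \gamma_j(G)^{(p^k)}$, where $\gamma_j(G)$ is the ordinary lower central series and $H^{(p^k)}$ denotes the subgroup generated by $p^k$-th powers; alternatively one argues directly from the two defining relations $[\Gamma_i,\Gamma_j] \subseteq \Gamma_{i+j}$ and $g \in \Gamma_i \Rightarrow g^p \in \Gamma_{ip}$ recalled in Definition~\ref{def:pdegrees}. Since $\gamma_j(G) = 1$ for $j > c$ and $H^{(p^k)} = 1$ whenever $k \ge e$ (the exponent divides $p^e$), every nontrivial factor appearing in $\Gamma_i(G)$ has $j p^k \le c\,p^{e-1}$; hence $\Gamma_i(G) = 1$ once $i > c\,p^{e-1}$, i.e. $\ell \le c\,p^{e-1} \le cE$. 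Combining this with the reduction to Sylow subgroups and the variance estimate above, the family has bounded variance, and Theorem~\ref{thm:nilpotent}(\ref{case:bounded_var_general}) yields the conclusion.

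I expect the only real subtlety to lie in the length bound, whose content is that the $p$-lower central series \emph{contains} the ordinary lower central series ($\Gamma_i \supseteq \gamma_i$) rather than being contained in it, so bounded nilpotency class by itself does \emph{not} control $\ell$ --- for instance $\Z/p^n\Z$ has class $1$ but $p$-lower central series of length $p^{n-1}$ (Example~\ref{ex:basichomocyclic}). It is precisely the interaction of bounded class with bounded exponent that collapses $\ell$, via Jennings' formula, which is why both hypotheses appear in the statement.
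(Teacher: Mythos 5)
Your proof is correct and follows essentially the same route as the paper's: both reduce to showing that bounded nilpotency class together with bounded exponent forces the $p$-lower central series of each Sylow subgroup to have bounded length (the paper states the bound $\ell \le mc$ without proof; your $\ell \le c\,p^{e-1}$ via Jennings' formula supplies the justification and is consistent with it), and then deduce bounded variance from bounded length exactly as in the proof of Corollary~\ref{cor:bounded_length} before invoking Theorem~\ref{thm:nilpotent}(\ref{case:bounded_var_general}). Your remark that bounded class alone does not control $\ell$ (cf.\ Example~\ref{ex:basichomocyclic}) correctly identifies why both hypotheses are needed.
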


\begin{proof}
If $G$ has bounded nilpotency class and bounded exponent then for each of its
Sylow $p$-subgroups $P$, the $p$-lower central series has bounded length
(indeed, the length $\ell$ is at most $mc$ where $m$ is the exponent and $c$ is the
nilpotency class). So
Theorem~\ref{thm:nilpotent}(\ref{case:bounded_var_general}) applies.
\end{proof}

Our key tool for proving Theorem~\ref{thm:nilpotent} will be showing how to
extend our slice rank bounds from a normal subgroup to its parent group,
which may be of independent interest. Call a subset $J$ of a ring $R$
\emph{characteristic} if $\alpha(J) = J$ for all $\alpha \in \Aut(R)$.

\begin{lemma} \label{lem:normal}
Let $\F$ be a field, $G$ be a group, and $N \unlhd G$ be a normal subgroup.
Suppose that $I \subseteq \F[N]$ is a subspace and $J \subseteq \F[N]$ is a
characteristic right ideal. Then \[\slicerank(\F[G]) \leq
|G/N|\left(\codim_{\F[N]} I + \codim_{\F[N]} J + \dim IJ \right).\]
\end{lemma}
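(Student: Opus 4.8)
The plan is to deduce Lemma~\ref{lem:normal} from Proposition~\ref{prop:codim} applied to the algebra $\D = \F[G]$, by building subspaces $A, B, C$ of $\F[G]$ out of the given $I, J \subseteq \F[N]$ whose codimensions and dimension multiply up by at most $|G/N|$. The natural candidates are $A = \F[G] \cdot I$ (or rather the span of $g I$ over coset representatives $g$), $B = \F[G]\cdot J = \bigoplus_g g J$, and $C = \F[G] \cdot IJ$. Since $N$ is normal, left multiplication by $g \in G$ sends $\F[N]$ to itself: concretely, $g \F[N] = \F[N] g$ as subspaces of $\F[G]$, because $g n = (g n g^{-1}) g$ and $gng^{-1} \in N$. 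So $\F[G] = \bigoplus_{g \in T} g\F[N]$ for a transversal $T$ of $N$ in $G$, and $g\F[N]$ is a free left $\F[N]$-module... but we want \emph{right} $\F[N]$-module structure, which is why the hypothesis asks $J$ to be a right ideal. The subtlety is that $I$ is merely a subspace, so $\F[G]\cdot I$ need not decompose coset-by-coset; this is where we must be careful.

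Here is the fix I would pursue. Write $\F[G] = \bigoplus_{g\in T} \F[N] g$ (this is a decomposition into right $\F[N]$-submodules, since $\F[N]g \cdot \F[N] = \F[N] (g\F[N]g^{-1}) g = \F[N] \F[N] g = \F[N]g$ using normality of $N$). Now set $B = \bigoplus_{g \in T} J g$; since $J$ is a right ideal in $\F[N]$, each $Jg$ is a right $\F[N]$-submodule of $\F[N]g$, and $\codim_{\F[G]} B = \sum_g \codim_{\F[N]} J = |G/N|\codim_{\F[N]} J$. For $A$ we instead use the \emph{left} decomposition $\F[G] = \bigoplus_{g\in T} g\F[N]$ and set $A = \bigoplus_{g\in T} g I$, so $\codim_{\F[G]} A = |G/N| \codim_{\F[N]} I$. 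Then $A \cdot B = \bigl(\sum_g gI\bigr)\bigl(\sum_h Jh\bigr) = \sum_{g,h} g\, I\, J\, h$. Crucially $IJ$ is a subspace of $\F[N]$ of dimension $\le \dim IJ$, and $g(IJ)h \subseteq g\F[N]h = (g\F[N]g^{-1})gh = \F[N] gh$, so $\sum_{g,h} g(IJ)h \subseteq \sum_{g,h} \F[N]\cdot(\text{the element } x_{g,h,\cdot})$; more carefully, $g(IJ)h = (g(IJ)g^{-1})\cdot gh$, and $g(IJ)g^{-1}$ is a subspace of $\F[N]$ of the same dimension $\dim IJ$ (conjugation by $g$ is an algebra automorphism of $\F[N]$ — here is the one place the hypothesis that $J$, hence plausibly one wants $IJ$ or at least its span, interacts with $\Aut$; actually conjugation gives a fixed bound $\dim g(IJ)g^{-1} = \dim IJ$ regardless). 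So $A\cdot B \subseteq \sum_{w \in T}\bigl(\sum_{gh \equiv w} g(IJ)g^{-1}\bigr) w =: C$, and since each inner sum is a subspace of $\F[N]$ we get $\dim C \le \sum_{w\in T}\dim\F[N] \le |G/N|\,|N| = |G|$ — too weak. The bound we want is $\dim C \le |G/N|\dim IJ$, so I need the inner sums $\sum_{gh\equiv w} g(IJ)g^{-1}$ to each have dimension $\le \dim IJ$, which fails in general since different $g$ give different conjugates.

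This is exactly where the \emph{characteristic} hypothesis on $J$ does its work, and it is the main obstacle to get right. If $J$ is characteristic in $\F[N]$ then $gJg^{-1} = J$ for every $g\in G$ (conjugation is an automorphism of $\F[N]$ since $N$ is normal), so $Jg = gJ$ and thus the left-span $\sum_g gJ$ equals the right-span $\bigoplus_g Jg$ and is a two-sided object. The clean route: take $B = \bigoplus_{g\in T} Jg = \bigoplus_{g\in T}gJ$ (both descriptions agree), a right $\F[N]$-module with $\codim = |G/N|\codim J$; take $A = \bigoplus_{g\in T} gI$ with $\codim = |G/N|\codim I$; then $A\cdot B = \sum_{g,h}(gI)(hJ) = \sum_{g,h} g\,I\,(hJh^{-1})\,h = \sum_{g,h} g\, I\, J\, h$ using $hJh^{-1}=J$. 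Now $gIJh \subseteq g(IJ)h$, and $g(IJ) \subseteq g\F[N] = \F[N]g$, so I'd like $C = \bigoplus_{w\in T}(IJ)$-copies. Precisely: $g\,I\,J\,h = g(IJ)h \subseteq \F[N]\cdot gh$ has the form (subspace of $\F[N]$ of dim $\le\dim IJ$ depending only on the coset of $g$, since $gIJ\cdot g^{-1}$... ). Hmm — cleanest is: $gIJh = g(IJ)g^{-1}\cdot gh$, and because $gJg^{-1}=J$ one checks $g(IJ)g^{-1} = (gIg^{-1})(gJg^{-1}) = (gIg^{-1})J$; this still depends on $g$. So one genuinely must instead organize by the \emph{right} coset: $g(IJ)h = g\cdot (IJ\cdot hJ h^{-1}...)$. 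Let me therefore set it up with $A$ spanned by $Ig$ on the right: $A = \bigoplus_g Ig$, but $I$ isn't an ideal so $A\cdot\F[N]\ne A$ — fine, $A$ need only be a subspace for Proposition~\ref{prop:codim}. Then $A\cdot B = \bigl(\sum_g Ig\bigr)\bigl(\sum_h hJ\bigr)$; here $g\cdot h J = gh\cdot(h^{-1}... )$, messy again.

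The actual argument, which I'd write out carefully: choose a transversal $T$, write $\F[G]=\bigoplus_{t\in T}\F[N]t$. Put $B=\bigoplus_{t}Jt$ (right ideal $\Rightarrow$ right $\F[N]$-submodule; $\codim_{\F[G]}B=|G/N|\codim J$). Put $A=\bigoplus_t It$ (a subspace; $\codim_{\F[G]}A=|G/N|\codim I$). Then $A\cdot B=\sum_{s,t}(Is)(Jt)=\sum_{s,t}I\,(sJs^{-1})\,st=\sum_{s,t}I\,J\,st$ using that $J$ is characteristic so $sJs^{-1}=J$. As $s$ ranges over $T$ and we fix the product coset, $IJst$ for varying $s$ all lie in $(IJ)\cdot(\text{coset of }st)$ — wait, $IJ\subseteq\F[N]$ and $IJ\cdot st$ depends only on $st$, so $A\cdot B\subseteq\sum_{w\in T}(IJ)w=:C$, and $\dim C\le|T|\dim IJ=|G/N|\dim IJ$. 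Plugging into Proposition~\ref{prop:codim}: $\slicerank(\F[G])\le\codim A+\codim B+\dim C\le|G/N|(\codim I+\codim J+\dim IJ)$, as claimed. The only real content is (i) normality $\Rightarrow sJs^{-1}\subseteq\F[N]$ and the right-module decomposition, (ii) characteristic $\Rightarrow sJs^{-1}=J$, which collapses the double sum, and (iii) that $A$ is allowed to be merely a subspace. I expect step (ii) — verifying that conjugation by a group element is an $\F$-algebra automorphism of $\F[N]$ and hence fixes the characteristic ideal $J$ — to be the crux, together with the bookkeeping that makes $\dim C$ come out to $|G/N|\dim IJ$ rather than something larger.
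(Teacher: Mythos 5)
Your final argument is the same route as the paper's proof: decompose $\F[G]=\bigoplus_{t\in T}\F[N]\,t$ over a transversal $T$, take $A=\bigoplus_t I t$ and $B=\bigoplus_t J t$, use normality of $N$ (so conjugation by any $s\in G$ is an $\F$-algebra automorphism of $\F[N]$) together with $J$ being characteristic to rewrite $(Is)(Jt)\subseteq I\,(sJs^{-1})\,st=(IJ)\,st$, conclude $A\cdot B\subseteq C:=\bigoplus_{w\in T}(IJ)w$, and apply Proposition~\ref{prop:codim} with $\codim A=|G/N|\codim_{\F[N]}I$, $\codim B=|G/N|\codim_{\F[N]}J$, and $\dim C\le |G/N|\dim IJ$. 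So the approach and all the main ideas match the paper's.

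The one step you assert without an adequate justification is exactly the containment $A\cdot B\subseteq C$. The observation that ``$IJ\cdot st$ depends only on $st$'' only gives $(IJ)\,st\subseteq \F[N]\,w$, where $st=n_{st}w$ with $n_{st}\in N$ and $w\in T$; by itself that would force $C$ to be all of $\F[G]$, the same ``too weak'' bound you rejected in your earlier attempt. What is needed is $(IJ)\,n_{st}\subseteq IJ$, and this is precisely where the hypothesis that $J$ is a right ideal of $\F[N]$ does its work: $J n_{st}\subseteq J$, hence $(IJ)\,st=(I(Jn_{st}))\,w\subseteq (IJ)\,w$. (Your only explicit appeal to the right-ideal hypothesis---that each $Jt$ is a right $\F[N]$-submodule---is not where it is actually used.) This is the same step in the paper's proof where $q_iq_j=n_{ij}q_{i'}$ and one notes that $yn_{ij}\in J$ because $J$ is a right ideal. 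With that one line added, your argument is complete and coincides with the paper's.
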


\ifconfversion See the full version for the proofs of Lemma~\ref{lem:normal}
and Theorem~\ref{thm:nilpotent}. The proof of Theorem~\ref{thm:nilpotent}
uses the growth of the prime numbers to show that in any nilpotent group $G$,
at least one of its Sylow subgroups $P$ has size $|P| \geq |G|^{\Omega(1)}$,
and then applies Theorem~\ref{thm:pgroups} to $P$, and uses
Lemma~\ref{lem:normal} to extend from $P$ to $G$. \else
\begin{proof}
Let $k = |G/N|$. Let $\{q_i : i \in [k]\}$ be a set of coset representatives
of $N$ in $G$. Let $I' = \bigoplus_i I \cdot q_i$ where $I \cdot q_i = \{xq_i
: x \in I\}$, and similarly let $J' = \bigoplus_i J \cdot q_i$. Then we have
$I' J' = \Span\{ x q_i y q_j : x \in I, y \in J, i,j \in [k] \}$. Since $J$
is characteristic, and conjugation by $q_i$ is an automorphism of $\F[N]$, we
have $q_i y = q_i y q_i^{-1} q_i = y' q_i$ for some other $y' \in J$. But
then we have $I' J' \subseteq \Span\{ xy q_i q_j : x \in I, y \in J, i,j \in
[k] \}$. Now, $q_i q_j = n_{ij} q_{i'}$ for some $n_{ij} \in N$ and some $i'
\in [k]$, but since $J$ is a right ideal, $y n_{ij}$ is again in $J$.
Finally, we thus have $I' J' \subseteq \Span\{ xy q_i : x \in I, y \in J, i
\in [k] \} = \bigoplus_i (IJ) \cdot q_i$. We then apply
Proposition~\ref{prop:codim}, noting that $\codim_{\F[G]} I' = |G/N|
\codim_{\F[N]}I$ and $\codim_{\F[G]} J' = |G/N| \codim_{\F[N]}J$.
\end{proof}

\begin{proof}[Proof of Theorem~\ref{thm:nilpotent}]
Let $G$ be a nilpotent group of exponent $\leq m$, and order $|G| = p_1^{n_1}
p_2^{n_2} \dotsb p_d^{n_d}$, where the $p_i$ are distinct primes. Then $G
\cong P_1 \times P_2 \times \dotsb \times P_d$ where each $P_i$ has order
$p_i^{n_i}$.  We will show that at least one of the $P_i$ satisfies $|P_i|
\geq |G|^{\Omega(1)}$. Let $k$ be the index which maximizes $n_k$. Let $N =
\sum_{i=1}^d n_i$, then $n_k \geq N/d$. Since $\sum_i n_i \ln p_i = \ln |G|$
and $p_i \leq m$ for all $i$, we have $N \geq \frac{\ln |G|}{\ln m}$. Let
$\pi(n)$ be the number of primes $\leq n$. As $\pi(n) \leq \frac{1.25506
n}{\ln n}$ for all $n$ (e.\,g., \cite{rosserSchoenfeld}), we have that $d
\leq \frac{1.3 m}{\ln m}$. Combining these bounds on $N$ and $d$, we get $n_k
\geq N/d \geq \frac{\ln |G|}{1.3 m}$. Thus $|P_k| = p_k^{n_k} \geq p_k^{\ln
|G| / 1.3 m} \geq |G|^{\Omega(1)}$ (since $m \leq O(1)$).

Now let $p = p_k$ and $P = P_k$. We consider the two cases of the theorem
separately, showing in each case that Theorem~\ref{thm:pgroups} applies to
$P$. In case (\ref{case:bounded_var_general}), by hypothesis  the $p$-degrees of $P$ have bounded
variance, and hence Theorem~\ref{thm:pgroups}(\ref{case:bounded_var}) applies
to $P$; in case (\ref{case:linear-expectation_general}), by hypothesis the $p$-degrees of $P$ have linear expectation, and hence
Theorem~\ref{thm:pgroups}(\ref{case:linear-expectation}) applies to $P$.

Finally, from the proof of Theorem~\ref{thm:pgroups} we have that there are
characteristic ideals $I,J \subseteq \F_p[P]$ (namely, certain powers of the
augmentation ideal) such that $\codim I + \codim J + \dim IJ \leq |P|^{1 -
\Omega(1)}$. Applying Lemma~\ref{lem:normal}, we get that
$\slicerank_{\F_p}(\mult{G}) \leq |G/P| |P|^{1 - \Omega(1)} = |G| /
|P|^{\Omega(1)}$, and since $|P| \geq |G|^{\Omega(1)}$ we get
$\slicerank_{\F_p}(\mult{G}) \leq |G|^{1 - \Omega(1)}$. Now apply
Corollary~\ref{cor:key}.
\end{proof}
\fi

\mysection{Ruling out constructions using Young subgroups} \label{sec:symmetric}

If one is to prove $\omega = 2$ via the group-theoretic approach, one needs
(a family of) groups $G$ with subsets $S, T, U$ that satisfy the Triple
Product Property, and with $|S|, |T|, |U|$ all at least $|G|^{1/2 - o(1)}$.
Although we conjectured in \cite{CKSU} that such constructions are obtainable
in a variety of ways in wreath product groups, there is only one {\em
currently known} construction actually achieving this bound, which appeared
in the original 2003 paper of Cohn and Umans \cite{CU03}. This is the
so-called ``triangle construction'' in the symmetric group. We recall it
here:

\begin{figure}[!htbp]
\label{fig:tri-and-hex}
\begin{center}
\begin{tikzpicture}
  \fill[black] (0,0) circle (0.075);
  \fill[black] (1,-0.866025) circle (0.075);
  \fill[black] (-1,-0.866025) circle (0.075);
  \fill[black] (0.5,0) circle (0.075);
  \fill[black] (-0.5,0) circle (0.075);
  \fill[black] (0.5,-0.866025) circle (0.075);
  \fill[black] (0,0.866025) circle (0.075);
  \fill[black] (0,-0.866025) circle (0.075);
  \fill[black] (-0.5,-0.866025) circle (0.075);
  \fill[black] (-0.75,-0.4330125) circle (0.075);
  \fill[black] (-0.25,-0.4330125) circle (0.075);
  \fill[black] (0.25,-0.4330125) circle (0.075);
  \fill[black] (0.75,-0.4330125) circle (0.075);
  \fill[black] (-0.25,0.4330125) circle (0.075);
  \fill[black] (0.25,0.4330125) circle (0.075);
\end{tikzpicture}
\hspace{.4in}
\begin{tikzpicture}
  \fill[black] (0,0) circle (0.075);
  \fill[black] (1,0) circle (0.075);
  \fill[black] (-1,0) circle (0.075);
  \fill[black] (0.5,0) circle (0.075);
  \fill[black] (-0.5,0) circle (0.075);
  \fill[black] (0.5,0.866025) circle (0.075);
  \fill[black] (0.5,-0.866025) circle (0.075);
  \fill[black] (0,0.866025) circle (0.075);
  \fill[black] (0,-0.866025) circle (0.075);
  \fill[black] (-0.5,0.866025) circle (0.075);
  \fill[black] (-0.5,-0.866025) circle (0.075);
  \fill[black] (-0.75,-0.4330125) circle (0.075);
  \fill[black] (-0.25,-0.4330125) circle (0.075);
  \fill[black] (0.25,-0.4330125) circle (0.075);
  \fill[black] (0.75,-0.4330125) circle (0.075);
  \fill[black] (-0.75,0.4330125) circle (0.075);
  \fill[black] (-0.25,0.4330125) circle (0.075);
  \fill[black] (0.25,0.4330125) circle (0.075);
  \fill[black] (0.75,0.4330125) circle (0.075);
\end{tikzpicture}
\end{center}
\caption{A triangular array of points and a hexagonal array of points.}
\end{figure}
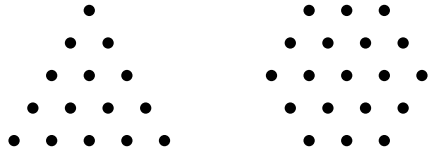

\begin{theorem}
\label{thm:triangle} Let $m$ be a positive integer and let $n = m(m+1)/2$.
Let $S_{n}$ act on the triangular array of points (as in Figure
\ref{fig:tri-and-hex}) with side length $m$. Then the three Young subgroups
$S, T, U$ that preserve lines parallel to each of the three sides,
respectively, satisfy the Triple Product Property, and $|S| = |T| = |U| =
|S_n|^{1/2 - o(1)}$.
\end{theorem}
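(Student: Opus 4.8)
The plan is to establish the size estimate by a Stirling computation and the Triple Product Property by an induction on $m$. First coordinatize the triangular array of side $m$ by $\{(a,b,c)\in\Z_{\ge 0}^3:a+b+c=m-1\}$, which has $\binom{m+1}{2}=n$ elements; the lines parallel to the three sides are the level sets $A_\alpha=\{a=\alpha\}$, $B_\beta=\{b=\beta\}$, $C_\gamma=\{c=\gamma\}$, of sizes $m-\alpha$, $m-\beta$, $m-\gamma$. Each family partitions the point set, so the subgroup $S\le S_n$ preserving every line parallel to one side is the Young subgroup $\prod_{\alpha=0}^{m-1}\mathrm{Sym}(A_\alpha)\cong S_m\times S_{m-1}\times\cdots\times S_1$, and likewise for $T$ and $U$; in particular $|S|=|T|=|U|=\prod_{k=1}^m k!$. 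Taking logarithms and using $\log k!=k\log k-k\log e+O(\log k)$ together with $\sum_{k\le m}k\log k=\tfrac{m^2}{2}\log m+O(m^2)$ gives $\log|S|=\tfrac{m^2}{2}\log m+O(m^2)$, while $\log n!=n\log n+O(n)=m^2\log m+O(m^2)$ since $n\sim m^2/2$; hence $\log|S|/\log|S_n|=\tfrac12+O(1/\log m)=\tfrac12-o(1)$, i.e.\ $|S|=|T|=|U|=|S_n|^{1/2-o(1)}$.

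For the Triple Product Property, note that $S,T,U$ are subgroups, so $Q(S)=S$, $Q(T)=T$, $Q(U)=U$, and it suffices to show that $\sigma\tau\upsilon=1$ with $\sigma\in S$, $\tau\in T$, $\upsilon\in U$ forces $\sigma=\tau=\upsilon=1$. Given such a triple, set $h(p):=b(\upsilon(p))$ and $g(p):=h(p)-b(p)$. Since $\upsilon$ preserves $c$-coordinates, $\upsilon(p)=(a(p)-g(p),\,h(p),\,c(p))$; since $\tau$ preserves $b$-coordinates, $\sigma$ preserves $a$-coordinates, and $\sigma\tau\upsilon(p)=p$, we get $b(\tau\upsilon(p))=h(p)$ and $a(\tau\upsilon(p))=a(p)$, so $\tau\upsilon(p)=(a(p),\,h(p),\,c(p)-g(p))$ and $\sigma(\tau\upsilon(p))=p$. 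Three properties of $h$ follow: (a) $\upsilon$ permutes each $C_\gamma$, so $p\mapsto h(p)$ is a bijection $C_\gamma\to\{0,\dots,m-1-\gamma\}$; (b) $\tau\upsilon$ permutes each $A_\alpha$ (as $a(\tau\upsilon(p))=a(p)$) and sends $p$ to a point with $b$-coordinate $h(p)$, so $p\mapsto h(p)$ is a bijection $A_\alpha\to\{0,\dots,m-1-\alpha\}$; (c) $\upsilon(p)$ and $\tau\upsilon(p)$ are genuine array points, so $g(p)\le a(p)$ and $g(p)\le c(p)$, i.e.\ $0\le h(p)\le b(p)+\min(a(p),c(p))$.

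It now suffices to prove, by induction on $m$, that any integer function $h$ on the side-$m$ triangle satisfying (a), (b), (c) must equal $b$: granting this, $g\equiv 0$, so $\upsilon=1$, whence $\tau\upsilon(p)=p$ forces $\tau=1$ and then $\sigma=1$. The base cases $m\le 2$ are immediate since there the two ``legs'' $\{a=0\}$ and $\{c=0\}$ already exhaust the points. For the inductive step, first pin down $h$ on the legs: on $A_0$, (c) gives $h(p)\le b(p)$ and (b) gives that $h$ restricts to a bijection $A_0\to\{0,\dots,m-1\}$; since the point of $b$-coordinate $j$ then has $h$-value $\le j$, and a permutation $\pi$ of $\{0,\dots,m-1\}$ with $\pi(j)\le j$ for all $j$ is the identity, we get $h=b$ on $A_0$, and symmetrically (using (a)) $h=b$ on $C_0$. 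The remaining points form the inner triangle $T'=\{a\ge 1,\ c\ge 1\}$, which under $(a,b,c)\mapsto(a-1,b,c-1)$ is a side-$(m-2)$ triangle. Conditions (a) and (b) for $T'$ follow from those for the full triangle plus $h=b$ on the legs (each $A_\alpha$ with $\alpha\ge 1$ loses exactly its $c=0$ point, whose value $m-1-\alpha$ is the maximum on that line, and similarly for $C_\gamma$). The form of (c) needed for $T'$, namely $h(p)\le b(p)+\min(a(p),c(p))-1$, follows from (c) together with the injectivity of $h$ along each line: if $a(p)\le c(p)$ then $b(p)+\min(a(p),c(p))=m-1-c(p)$, and this value is already attained on $C_{c(p)}$ at its (leg) point with $a=0$, so $h(p)$ cannot equal it; the case $c(p)\le a(p)$ is symmetric. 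Applying the inductive hypothesis to $T'$ gives $h=b$ everywhere, which completes the proof.

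I expect the main obstacle to be exactly this last point: closing the induction requires the bound $h(p)\le b(p)+\min(a(p),c(p))$ to improve by one when we pass to the inner triangle, and this improvement is not formal---it uses that $h$ is injective along each line, i.e.\ that $\sigma$ and $\upsilon$ are genuine permutations of the lines. The remaining ingredients (the coordinate bookkeeping, the Stirling estimate, and the remark that $Q(\cdot)$ of a subgroup equals the subgroup) are routine.
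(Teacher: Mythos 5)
The paper does not actually prove Theorem~\ref{thm:triangle}: it is recalled from the original Cohn--Umans paper \cite{CU03} and stated without proof, so there is no in-paper argument to compare yours against. On its own merits, your proof is correct. The size estimate via $|S|=\prod_{k=1}^m k!$ and Stirling is routine and right. For the TPP, the reduction to ``$\sigma\tau\upsilon=1$ forces $\sigma=\tau=\upsilon=1$'' (since $Q(H)=H$ for a subgroup) is correct, and your abstraction of the problem into the single function $h(p)=b(\upsilon(p))$ subject to (a), (b), (c) is a clean way to organize the induction. I checked the delicate points: the fact that a permutation $\pi$ of $\{0,\dots,m-1\}$ with $\pi(j)\le j$ is the identity pins down $h=b$ on both legs; the removed endpoint of each line $A_\alpha$ (resp.\ $C_\gamma$) with $\alpha,\gamma\ge 1$ lies on a leg and carries the maximal $h$-value $m-1-\alpha$ (resp.\ $m-1-\gamma$), so the bijectivity conditions descend to the inner side-$(m-2)$ triangle; and the improvement of (c) by one on the inner triangle does follow from injectivity of $h$ along the relevant line ($C_{c(p)}$ when $a(p)\le c(p)$, $A_{a(p)}$ otherwise), since the extremal value $b(p)+\min(a(p),c(p))=m-1-\max(a(p),c(p))$ is already taken by a leg point distinct from $p$. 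Your closing remark correctly identifies this last step as the crux, and you have supplied exactly the needed argument. The only cosmetic caveat is the choice of composition order for $\sigma\tau\upsilon$, which at worst swaps the roles of the $a$- and $c$-directions and is immaterial by symmetry.
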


Recall that a {\em Young subgroup} of the symmetric group $S_n$ is specified
by a partition of $[n]$, and consists of those permutations that preserve
that partition. In particular, every Young subgroups of $S_n$ is isomorphic
to $S_{n_1} \times S_{n_2} \times \dotsb \times S_{n_k}$ for some $n_i$ such
that $\sum n_i = n$.

The question of whether the triangle construction proves a nontrivial upper
bound on $\omega$ can be answered by appealing to Proposition
\ref{prop:nec-TPP}. Some algebraic manipulation and Stirling's formula shows
that \ifconfversion $|S| = |T| = |U| \le \frac{|S_n|^{1/2}}{e^{\Omega(n)}}$;
\else
\[|S| = |T| = |U| \le \frac{|S_n|^{1/2}}{e^{\Omega(n)}};\]
\fi on the other hand, the number of conjugacy classes of $S_n$ is the {\em
partition number}, which is asymptotically $e^{\Theta(\sqrt{n})}$. Thus the
triangle construction in $S_n$ does not prove any non-trivial bounds on
$\omega$ because the subgroups $S, T, U$ are \emph{very slightly} too small.

However, the story does not end there. One can show that the proof of Theorem
\ref{thm:triangle} generalizes to any triple of Young subgroups that have
trivial pairwise intersection and satisfy an additional ordering axiom that
enables the inductive proof to go through. An example is the triple of Young
subgroups that preserve lines in each of the three directions parallel to the
sides of a {\em hexagon} (see Figure \ref{fig:tri-and-hex}). Intriguingly, we
find that the triangle construction is {\em not optimal}, in the sense that
for the symmetric group $S_n$, subgroups described via the hexagon are
significantly larger than subgroups described via the triangle. As a concrete
example, the hexagon with side length $6$ and the triangle with side length
$13$ both have $91$ points, yet the ratio of the size of the Young subgroups
described via the hexagon to the size of the Young subgroups described via
the triangle is
\[\frac{6!\cdot7!\cdot8!\cdot9!\cdot10!\cdot11!\cdot10!\cdot9!\cdot8!\cdot7!\cdot6!}
{13!\cdot12!\cdot11!\cdot10!\cdot9!\cdot8!\cdot7!\cdot6!\cdot5!\cdot4!\cdot3!\cdot2!\cdot1!} = 2940/1573 \ge 1.869\ldots.\]

This raises the question: could three Young subgroups (constructed via a
different ``shape'' than the triangle or perhaps not having a geometric
description at all) prove $\omega = 2$? This question is quite delicate as it
depends on the lower order terms in the size of the Young subgroups. It is
also very sensitive to these lower order terms: if one could achieve
\ifconfversion $|S| = |T|= |U| \ge \frac{|S_n|^{1/2}}{e^{o(\sqrt{n})}}$,
\else
\[|S| = |T|= |U| \ge \frac{|S_n|^{1/2}}{e^{o(\sqrt{n})}},\]
\fi then this would prove $\omega = 2$ via Theorem \ref{thm:TPP}. And indeed
one {\em can} achieve this bound for two of the three subgroups by
``stretching'' the triangle in one direction. Is it possible for all three?
We prove that the answer is no, using only the fact that the three Young
subgroups must have trivial pairwise intersections (which is necessary to
satisfy the TPP):

\begin{theorem}
\label{thm:Young}
Let $H_1, H_2, H_3 \subseteq S_n$ be Young subgroups such that $H_1 \cap H_2
= H_2 \cap H_3 = H_1 \cap H_3 = \{1\}$. There exists universal constants $c,d > 0$
for which
\[\frac{|S_n|}{(|H_1||H_2||H_3|)^{2/3}} \ge e^{cn - d\sqrt{n}\log n}.\]
\end{theorem}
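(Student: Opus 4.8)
The plan is to translate everything into a statement about set partitions and then induct on $n$. A Young subgroup $H_r\le S_n$ is the stabilizer of a set partition $P_r$ of $[n]$, with $\abs{H_r}=\prod_{B\in P_r}\abs{B}!$, and the hypothesis $H_i\cap H_j=\{1\}$ for $i\ne j$ says exactly that $\abs{B\cap B'}\le 1$ whenever $B\in P_i$, $B'\in P_j$, $i\ne j$ (a ``grid'' condition). Taking logarithms, the theorem is equivalent to the assertion that the potential
\[
f(P_1,P_2,P_3)\ :=\ \log n!\ -\ \tfrac23\sum_{r=1}^{3}\ \sum_{B\in P_r}\log\abs{B}!
\]
satisfies $f\ge cn-d\sqrt n\log n$. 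Two consequences of the grid condition drive the argument: (i) if $A\in P_1$ has $\abs{A}=a$, then $A$ meets exactly $a$ distinct blocks of $P_2$ (each in one point), and likewise of $P_3$, so $P_2$ and $P_3$ each have at least $a$ blocks, whence $\log\abs{H_2},\log\abs{H_3}\le\log((n-a+1)!)$ because a product $\prod b_j!$ with $\ge a$ parts summing to $n$ is largest when one part is $n-a+1$ and the rest are singletons (log-factorial is convex); and (ii) deleting the points of $A$ from all blocks produces a valid instance on $n-a$ points.

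The heart of the induction is an exact formula for how $f$ changes under a single-block deletion. If $A\in P_1$ with $\abs{A}=a$ meets the $P_2$-blocks of sizes $b_1,\dots,b_a$ and the $P_3$-blocks of sizes $c_1,\dots,c_a$, and $P'$ is the instance on $[n]\setminus A$, then---each affected block loses one point, changing its log-factorial by $\log$ of its old size, while $P_1$ simply loses $A$---
\[
f(P)-f(P')\ =\ \sum_{k=n-a+1}^{n}\log k\ -\ \tfrac23\Bigl(\log a!+\sum_{t=1}^{a}\log b_t+\sum_{t=1}^{a}\log c_t\Bigr).
\]
Specializing to the case where $A$ is the \emph{largest} block over all three partitions, so $a=a_{\max}$ and every $b_t,c_t\le a$, the fact that the $b_t$ lie in distinct blocks gives $\sum_t b_t\le n$, so concavity of $\log$ yields $\sum_t\log b_t\le a\log\min(a,n/a)$ and likewise for the $c_t$; combined with $\sum_{k=n-a+1}^{n}\log k\ge a\log(n-a)$ and Stirling's bound $\log a!\le a\log a-a+O(\log a)$, this gives
\[
f(P)-f(P')\ \ge\ \tfrac13\,a\,\log\!\tfrac{a^2}{n}\ +\ a\log\!\bigl(1-\tfrac an\bigr)\ +\ \tfrac23\,a\ -\ O(\log n).
\]

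The induction on $n$ now splits on the size of $a_{\max}$. If $a_{\max}\le 0.6\sqrt n$, then $\log\abs{H_r}\le\sum_B\abs{B}\log\abs{B}\le n\log(0.6\sqrt n)$ for each $r$, so $f\ge n(\log(n/a_{\max}^2)-1)=\Omega(n)$ directly. If $a_{\max}\ge 0.9\,n$, fact (i) makes $\log\abs{H_2},\log\abs{H_3}$ far below $\tfrac12\log n!$ and a short Stirling computation again gives $f=\Omega(n\log n)$ directly. In the remaining range $0.6\sqrt n<a_{\max}<0.9\,n$ one deletes the largest block: the displayed inequality shows $f(P)-f(P')\ge c\,a_{\max}$ for a universal $c>0$ (once $n$ exceeds an absolute constant)---the $\tfrac23 a$ term carries the range $a_{\max}=\Theta(\sqrt n)$, while $\tfrac13 a\log(a^2/n)=\tfrac13 a(\log n+2\log(a_{\max}/n))$ carries the range where $a_{\max}$ is a constant fraction of $n$---and fact (ii) lets one apply the inductive hypothesis to the $(n-a_{\max})$-point instance. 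Since every deletion removes at least $0.6\sqrt{n_{\mathrm{cur}}}$ points, $\sqrt{n_{\mathrm{cur}}}$ drops by $\Omega(1)$ at each step, so there are only $O(\sqrt n)$ deletions before termination; the $O(\log n)$ per-step slack thus accumulates to the $d\sqrt n\log n$ loss, and a trivial base case for bounded $n$ completes the proof.

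The delicate point---and the reason the argument is as fussy as it is---is verifying $f(P)-f(P')\ge c\,a_{\max}$ with a \emph{single} positive constant $c$ across all of $0.6\sqrt n<a_{\max}<0.9\,n$: the three regimes (small, large, and intermediate $a_{\max}$) have tight transitions, so one must choose the thresholds separating the three cases of the induction---and sharpen one or two of the intermediate Stirling/convexity estimates---so that $c$ stays bounded below everywhere. This is also exactly what degrades if one replaces $S_n$ by $S_{n_1}\times S_{n_2}$: fact (i)'s leverage---one large block forcing many blocks in the other two partitions---is split across the two orbits and no longer suffices.
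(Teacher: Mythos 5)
Your proposal is correct and is essentially the paper's own argument in logarithmic/partition language: the same induction on $n$ with the same three-way case split on the largest part (thresholds $\Theta(\sqrt n)$ and $0.9n$), the same deletion of the largest block in the middle case, and the same key estimates ($\prod_t b_t \le (n/a)^a$ since the intersecting blocks are distinct, plus Stirling), with your exact formula for $f(P)-f(P')$ being precisely $\log R$ from the paper. The only differences are cosmetic (additive potential $f$ instead of the ratio $R$, threshold $0.6\sqrt n$ instead of $e^{0.49}\sqrt n$, slightly different but equivalent endpoint estimates), and the constant-chasing you defer is likewise left implicit in the paper.
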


\ifconfversion The proof is a delicate induction; see the full version. \else
For the proof we will need the following inequality. Throughout, we will use
a version of Stirling's approximation: $e(n/e)^n \le n! \le en(n/e)^n$.

\begin{lemma}
\label{lem:binomial}
We have \[\binom{n}{t} \ge \frac{1}{e(n-t)t}\cdot\left (\frac{n}{t}\right
)^{t}\cdot e^{t(1 - t/n)}.\]
\end{lemma}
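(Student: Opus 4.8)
The plan is to start from the definition $\binom{n}{t} = n!/(t!\,(n-t)!)$ and feed in the two-sided Stirling bound $e(k/e)^k \le k! \le ek(k/e)^k$ recorded just before the lemma, using the \emph{lower} bound for $n!$ in the numerator and the \emph{upper} bounds for $t!$ and $(n-t)!$ in the denominator. Substituting these gives
\[\binom{n}{t} \ge \frac{e(n/e)^n}{e\,t\,(t/e)^t \cdot e\,(n-t)\,((n-t)/e)^{n-t}} = \frac{1}{e\,t\,(n-t)}\cdot\frac{n^n}{t^t\,(n-t)^{n-t}},\]
where the powers of $e$ cancel because $t + (n-t) = n$. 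This already isolates the prefactor $\frac{1}{e(n-t)t}$ that appears in the statement, so it remains to prove $\dfrac{n^n}{t^t\,(n-t)^{n-t}} \ge \left(\dfrac{n}{t}\right)^{t} e^{t(1 - t/n)}$.

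For that step I would write $n^n = n^t\cdot n^{n-t}$, so that the left-hand side becomes $\left(\frac{n}{t}\right)^t\left(\frac{n}{n-t}\right)^{n-t}$; cancelling the common factor $\left(\frac{n}{t}\right)^t$, the claim reduces to the single-variable inequality $\left(\frac{n}{n-t}\right)^{n-t} \ge e^{t(n-t)/n}$. Taking logarithms and dividing by $n-t > 0$, this is exactly $-\ln(1 - t/n) \ge t/n$, which is the elementary bound $-\ln(1-x) \ge x$ for $x = t/n \in [0,1)$, immediate from the power series $-\ln(1-x) = \sum_{k\ge 1} x^k/k$ (or from convexity of $-\ln$).

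I do not expect a real obstacle; the only things that need care are (i) choosing the correct side of each Stirling estimate — lower bound for the numerator factorial, upper bounds for the two denominator factorials — since mixing these up would produce a bound in the wrong direction, and (ii) noting that the intended regime is $1 \le t \le n-1$ (the factor $e(n-t)t$ in the denominator already signals this), so the degenerate cases $t=0$ and $t=n$ need not be addressed for the lemma's use in the proof of Theorem~\ref{thm:Young}.
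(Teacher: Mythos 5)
Your proposal is correct and follows essentially the same route as the paper: apply the stated Stirling bounds with the lower bound on $n!$ and upper bounds on $t!$ and $(n-t)!$, simplify to $\frac{1}{e(n-t)t}\cdot(n/t)^t(1-t/n)^{-(n-t)}$, and finish with an elementary inequality. The paper invokes $(1-1/x)^x\le 1/e$ where you use $-\ln(1-x)\ge x$, but these are the same fact in exponential versus logarithmic form.
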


\begin{proof}
Using the above Stirling approximation, we obtain
\begin{eqnarray*}
{\binom{n}{t}} & \ge &
\frac{e(n/e)^n}{e^2(n-t)t((n-t)/e)^{n-t}(t/e)^{t}}
 = \frac{(n/t)^{t}}{e(n-t)t(1 - t/n)^{n-t}}  .
\end{eqnarray*}
Using the fact that $(1 - 1/x)^x \le 1/e$ for $x \ge 1$ completes the
proof.
\end{proof}

The proof below uses the general principle that for a Young subgroup, it is
never made smaller by transferring an element from one part in the associated
partition of $[n]$ to another part that is no smaller.

\begin{proof}[Proof of Theorem \ref{thm:Young}]
The proof is by induction on $n$. Consider the largest part of $H_1$, $H_2$
and $H_3$, and let $t$ be its size. We have three cases depending on whether
$t$ is ``large'', ``small'', or neither. The base case for the induction is
when $t = n$, which is covered by the ``large'' case below. The thresholds
for ``large'' and ``small'' are somewhat delicate. Throughout the proof we
will identify a finite number of upper bounds on the constant $c$ and a
finite number of lower bounds on the constant $d$; the final statement thus
holds for some universal constants $c, d$, which we have not explicitly
worked out to avoid cluttering the argument.

\begin{description}
\item[$t$ large] If $t > 0.9n$ (the constant $0.9$ can be replaced with
    any constant larger than $5/6$) then the subgroup containing this part
    has size at most $(n-t)!t!$, while the other two subgroups each have at
    least $t$ parts (one part for each of the elements in the size-$t$
    set), and so they have size at most $(n-t+1)!$. Thus we have
\[\frac{|S_n|}{(|H_1||H_2||H_3|)^{2/3}} \ge
\frac{n!}{((n-t+1)^2t!(n-t)!^3)^{2/3}}\ge
\frac{(n/e)^n}{(n^6(n/e)^n(0.1n/e)^{3 \cdot 0.1n})^{2/3}} =
e^{\omega(n)},\] and in particular it is at least $e^{cn - d\sqrt{n}\log
n}$ for $c \le 0.2$, and sufficiently large $d$, as required.

\item[$t$ small] If $t < e^{0.49}\sqrt{n}$ (the constant $0.49$ can be
    replaced with any constant less than $1/2$, although there is an
    interaction with the $0.9$ constant above), then for each $i$,
\[|H_i| \le (e^{0.49}\sqrt{n})!^{\sqrt{n}/e^{0.49}} \le
\left (e^{1.49}\sqrt{n}(e^{0.49}\sqrt{n}/e)^{e^{0.49}\sqrt{n}}\right
)^{\sqrt{n}/e^{0.49}} = n^{O(\sqrt{n})} n^{n/2}/e^{0.51n}\] and so
$n!/(|H_1||H_2||H_3|)^{2/3} \ge e^{0.02n - O(\sqrt{n}\log n)} $ which is at
least $e^{cn - d\sqrt{n}\log n}$ for $c \le 0.02$ and sufficiently large
$d$, as required.

\item[$t$ neither large nor small] Otherwise we have $e^{0.49}\sqrt{n} \le
    t \le 0.9n$. We consider $H_1', H_2', H_3'$, the subgroups of $S_{n-t}$
    obtained by removing the elements associated with the part of size $t$.
    We have
\begin{equation} \label{eq:induction}
\frac{|S_n|}{(|H_1||H_2||H_3|)^{2/3}} = R \cdot
\frac{|S_{n-t}|}{(|H_1'||H_2'||H_3'|)^{2/3}},
\end{equation}
where
\[R = \frac{n!}{(n-t)!(t!(a_1a_2\ldots a_t)(b_1b_2\ldots
  b_t))^{2/3}}.\] Here, if $H_1$ is the subgroup containing the part of
size $t$, then $a_1, a_2, \ldots, a_t$ are the sizes of the $t$ parts of
$H_2$ intersecting that part, and $b_1, b_2, \ldots, b_t$ are the sizes of
the $t$ parts of $H_3$ intersecting that part. Note that $\sum_i a_i \le n$
and $\sum_i b_i \le n$ and thus $\prod_i a_i \le (n/t)^t$ and $\prod_i b_i
\le (n/t)^t$.

Using this upper bound, we find that
\[R \ge \frac{{\binom{n}{t}}}{(n/t)^t}\cdot \left (
  \frac{t!}{(n/t)^{t}} \right )^{1/3} \ge \left (\frac{e^{t(1 -
    t/n)}}{e(n-t)t}\right ) \cdot \left (
  \frac{t!}{(n/t)^{t}} \right )^{1/3},\]
where the last inequality used
Lemma \ref{lem:binomial}. Now, because $t \le 0.9n$, the first factor above
is at least $e^{0.1t}/(en^2)$, and because $t \ge e^{0.49}\sqrt{n}$, the
second factor above is at least $(t^2/(en))^t \ge e^{-0.02t/3}$. We
conclude that $R \ge e^{c_1t}/n^{c_2}$ for universal constants $c_1, c_2 >
0$.

Now, by induction we know that the right hand side of \eqref{eq:induction}
is at least
\[R\cdot e^{cn' - d\sqrt{n'}\log n'},\]
where $n' = n - t \le n - e^{0.49}\sqrt{n}$. One can verify that for
sufficiently large $d$ as a function of $c_2$, it holds that
$d\sqrt{n'}\log n' \le dn\log n - c_2$. Thus, provided $c \le c_1$,
\[R \cdot e^{cn' - d\sqrt{n'}\log n'} \ge e^{cn - d\sqrt{n}\log n},\]
as required.
\end{description}
This completes the proof.
\end{proof}
\fi 

An important open question is whether this theorem can be extended to rule
out all triples of subgroups of $S_n$, or even all triples of subsets, which
would eliminate the symmetric group as a means of potentially proving $\omega
=2$. Or, does it point toward a new construction in symmetric groups that
would prove $\omega = 2$? Along these lines, it is intriguing that if we alter
the setup only slightly, the theorem fails to hold. Specifically,
the three subgroups $S = S_n \times \{1\}$, $T =
\{1\} \times S_n$, and $U = \{(\pi, \pi): \pi \in S_n\}$ of $G = S_n^2$ satisfy the
conditions of the theorem (have pairwise trivial intersection) and have
largest-possible size: $|S| = |T| = |U| = |G|^{1/2}$.

\mysection{Future directions} \label{sec:future}

This work raises as many questions as it answers, both regarding efforts to
extend the slice rank upper bounds that rule out proving $\omega = 2$ in
certain groups, and to identify groups that seem beyond the reach of that
methodology, which then may be candidates for constructions aimed at proving
$\omega = 2$.

In the negative direction, the most ambitious but realistic conjecture we
could imagine would be that {\em every} finite group $G$ of bounded exponent
has slice rank at most $|G|^{1 - \epsilon}$ for fixed $\epsilon > 0$. A
slightly less ambitious version would be to prove this for all solvable
groups of bounded exponent. A first step in this direction beyond the results
in this paper might be to either (1) extend our results to nilpotent groups
{\em without} the added condition of having bounded nilpotency class, having bounded variance, or having linear expectation, or (2) to extend to solvable
groups whose $p$-Sylow subgroups satisfy the hypothesis of Theorem
\ref{thm:pgroups}. One way to achieve (2) would be to extend
Lemma~\ref{lem:normal} to non-normal subgroups (which would have further-reaching
consequences as well). It's even open whether Lemma~\ref{lem:normal} applies
in a black-box fashion to all normal subgroups, that is, whether
$\slicerank(G) \leq \slicerank(N) |G/N|$. In general, we wonder in what
families of groups can one control the shrinkage of the powers of the
augmentation ideal.

In the positive direction, can one construct a natural example of a family of $p$-groups
whose $p$-degrees have {\em neither} bounded variance \emph{nor}
linear expectation? In such groups can one give (in increasing order of
difficulty) lower bounds on the slice rank, a construction of a large
multiplicative matching, an STPP construction, a TPP construction? For the
latter two, what bound, if any, do they prove on $\omega$? (This requires
understanding the representation theory of the $p$-group in question.)

To get a feel for groups that avoid the conditions of
Theorem~\ref{thm:pgroups}, it is perhaps a useful exercise to understand the
behavior of the scale-free quantity $\delta'_G$ from the proof of
Lemma~\ref{lem:bounded_var} for $p$-degrees $r_i$ proportional to $i^c$ for
various $c$. Using the standard estimate that $\sum_{i=1}^\ell i^c$ is
$\Theta(\ell^{c+1})$ for $c > -1$, $\Theta(\log \ell)$ for $c=-1$, and
$\Theta(1)$ for $c < -1$, we find that the scale-free quantity $\delta'_G$ of
\eqref{eq:delta-prime-G} behaves as follows:
\[
\begin{array}{rccccccc}
c: & (-\infty,-3) & -3 & (-3, -2) & -2 & (-2, -1) & -1 & (-1, \infty) \\ \hline
\delta'_G: & \Theta(1) & \Theta(1/\log \ell) & \Theta(1/\ell^{3-|c|}) & \Theta( (\log \ell)^2 / \ell) & \Theta(1/\ell^{|c|-1}) & \Theta(1/\log \ell) & \Theta(1)
\end{array}
\]
We note that the case of $ c < -3$ is a special case of bounded variance
(Lemma~\ref{lem:bounded_var}). This table shows that the condition of having linear
expectation was sharp for these families, in that  $r$ does not have linear expectation
when $c\leq -1$, and indeed we see that $\delta_G$ is \emph{not}
$\Omega(n)$ for $r_i$ proportional to $1/i^c$ with $c \in [-1, -3]$ (unless
$\ell \leq O(1)$, which was covered separately by
Lemma~\ref{lem:bounded_length}). The case of $c=-2$ (or $c \to -2$ from the
right) also shows that the bound of $\Omega(n/\ell)$
(Lemma~\ref{lem:bounded_length}) is tight. Aside from a guide to searching
for groups in which one could potentially prove $\omega=2$, is there a good
intuitive explanation for the behavior of $\delta_G$ as $c$ varies?

Regarding the symmetric group, there are intriguing questions in the
positive and negative direction as well. Here we recall that negative results
should rule out TPP constructions in $S_n$ having sets of cardinality at
least $n!^{1/2}/e^{O(\sqrt{n})}$, while achieving even slightly larger
cardinality $n!^{1/2}/e^{o(\sqrt{n})}$ proves $\omega = 2$! Incidentally,
this sharp threshold between proving no bound on $\omega$ and $\omega = 2$
occurs in any group for which $d_{\max}^2$ is polynomially related to the
average irreducible representation dimension $d_i^2$, which often makes the threshold of
$|G|/(\mbox{\# conjugacy classes})$ the ``right'' one to aim for.

Thus, in the negative direction, the most ambitious conjecture regarding the
symmetric group would be that $S_n$ has slice rank at most
$n!^{1/2}/e^{O(\sqrt{n})}$. Note that this would follow from extending Lemma
\ref{lem:normal} to non-normal subgroups, since $S_n$ has (non-normal)
$p$-subgroups of size $\exp(n)$. A smaller step in the direction of ruling
out $\omega = 2$ in the symmetric group would be to extend our result to all
triples of subgroups (not just Young subgroups).

In the positive direction, can one give in $S_n$ (in increasing order of
difficulty) lower bounds on the slice rank of $n!/e^{o(\sqrt{n})}$, a
construction of a large multiplicative matching of cardinality
$n!/e^{o(\sqrt{n})}$, a TPP construction with sets of cardinality
$n!^{1/2}/e^{o(\sqrt{n})}$? Is there a small variation on the group $S_n$ (like
the direct product of a small number of symmetric groups as suggested after
the proof of Theorem \ref{thm:Young}) that circumvents the negative results
and admits TPP constructions of the above size?

\section*{Acknowledgments}

We thank Cris Moore, with help from Aaron Clauset, for the suggestion of the
case of bounded variance; this was a case of the right name for a concept
suggesting a better theorem than we had previously. For funding
acknowledgments see the title page.

\newpage
\appendix
\mysection{Tightness of slice rank bounds}
\label{sec:tight}

In this section we show that the bounds in our main theorems, Theorem
\ref{thm:pgroups} and Theorem \ref{thm:nilpotent}, are tight.

The following is an important relaxation we need for our constructions:

\begin{definition}[{border multiplicative matchings \cite[Def.~3.2]{BCCGNSU}}]
A {\em border multiplicative matching} in $G$ is given by three
sequences of elements in $G \times \Z$,
\[((s_1, a_1), (s_2, a_2), \ldots, (s_n, a_n)),\;\;\;\; ((t_1, b_1),
(t_2, b_2), \ldots, (t_n, b_n)),\;\;\;\; ((u_1, c_1), (u_2, c_2),
\ldots, (u_n, c_n)),\]
for which the following hold:
\begin{enumerate}
\item $(s_it_ju_k = 1 \mbox{ and } a_i + b_j + c_k = 0)
  \Longleftrightarrow i = j = k$ (multiplicative matching in $G \times \Z$), and
\item $s_it_ju_k = 1 \Rightarrow a_i + b_j + c_k \ge 0$ (positivity).
\end{enumerate}
\end{definition}

We can convert a border multiplicative matching in $G$ into a multiplicative
matching in powers of $G$. This was stated in \cite{BCCGNSU} for
abelian groups, but the same proof works \emph{mutatis mutandis} for
arbitrary groups:

\begin{lemma}[{Cf.\ \cite[Lem.~3.4]{BCCGNSU}}]
\label{lem:convert-border} Suppose there exists a border multiplicative
matching in $G$ of cardinality $m$. Then for every $N$, there exists a
multiplicative matching in $G^N$ of cardinality at least
\[m^N/(2Nt+1)^3,\] where $t$ is a constant independent of $N$.
\end{lemma}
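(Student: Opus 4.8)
The plan is to convert a border multiplicative matching in $G$ into an honest multiplicative matching in $G^N$ by a standard "layer-balancing" argument, mirroring the proof of \cite[Lem.~3.4]{BCCGNSU} and checking that abelianness was never used. Write the border multiplicative matching as sequences $(s_i,a_i)$, $(t_i,b_i)$, $(u_i,c_i)$ in $G\times\Z$, $i\in[m]$. Let $t$ be a bound on $|a_i|,|b_i|,|c_i|$ (this is the constant "$t$" in the statement, independent of $N$). Given $N$, consider index tuples $\vec\imath=(i_1,\dots,i_N)\in[m]^N$, and similarly $\vec\jmath,\vec k$. For such a tuple, define the corresponding element $s_{\vec\imath}=(s_{i_1},\dots,s_{i_N})\in G^N$ (and likewise $t_{\vec\jmath}$, $u_{\vec k}$). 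The key observation from the positivity axiom is that whenever $s_{\vec\imath}t_{\vec\jmath}u_{\vec k}=1$ in $G^N$, we have $s_{i_\ell}t_{j_\ell}u_{k_\ell}=1$ for every coordinate $\ell$, hence $a_{i_\ell}+b_{j_\ell}+c_{k_\ell}\ge 0$ for all $\ell$; and the sum $\sum_\ell (a_{i_\ell}+b_{j_\ell}+c_{k_\ell})=A(\vec\imath)+B(\vec\jmath)+C(\vec k)$ where $A(\vec\imath):=\sum_\ell a_{i_\ell}$, etc.

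First I would restrict to balanced layers: fix integers $\alpha,\beta,\gamma$ with $\alpha+\beta+\gamma=0$, and keep only those $\vec\imath$ with $A(\vec\imath)=\alpha$, those $\vec\jmath$ with $B(\vec\jmath)=\beta$, and those $\vec k$ with $C(\vec k)=\gamma$. Since each $A(\vec\imath)$ lies in the range $[-Nt,Nt]$, there are at most $(2Nt+1)$ choices for each of $\alpha,\beta$ (and then $\gamma$ is determined), so at most $(2Nt+1)^2$ triples $(\alpha,\beta,\gamma)$ to consider. Among these, by averaging there is some choice for which the sets $\mathcal I_\alpha=\{\vec\imath:A(\vec\imath)=\alpha\}$, $\mathcal J_\beta$, $\mathcal K_\gamma$ satisfy $|\mathcal I_\alpha|\cdot|\mathcal J_\beta|\cdot|\mathcal K_\gamma|\ge m^{3N}/(2Nt+1)^6$ — actually it is cleaner to pick $\alpha$ maximizing $|\mathcal I_\alpha|$, $\beta$ maximizing $|\mathcal J_\beta|$, $\gamma$ maximizing $|\mathcal K_\gamma|$ subject to $\alpha+\beta+\gamma=0$; one checks $\max$ over valid triples gives each factor at least $m^N/(2Nt+1)$, so the product is at least $m^{3N}/(2Nt+1)^3$. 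I would then pass to common size by truncating all three sets to the same cardinality $M\ge m^N/(2Nt+1)$ (possible by a matching/Hall-type argument, or more simply just note we can reindex; the $\slicerank$-style counting in \cite{BCCGNSU} handles this — I would follow whichever bookkeeping \cite[Lem.~3.4]{BCCGNSU} uses).

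The heart of the argument is then: within these balanced layers, $s_{\vec\imath}t_{\vec\jmath}u_{\vec k}=1$ forces, coordinatewise, $a_{i_\ell}+b_{j_\ell}+c_{k_\ell}\ge0$ with total sum $\alpha+\beta+\gamma=0$, hence $a_{i_\ell}+b_{j_\ell}+c_{k_\ell}=0$ for every $\ell$; combined with $s_{i_\ell}t_{j_\ell}u_{k_\ell}=1$ this is precisely the multiplicative-matching condition in $G\times\Z$ in coordinate $\ell$, so $i_\ell=j_\ell=k_\ell$ for all $\ell$, i.e.\ $\vec\imath=\vec\jmath=\vec k$. Thus the restricted sequences form a multiplicative matching in $G^N$ of the claimed cardinality. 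The one place to be careful — and the main obstacle — is the bookkeeping to get all three index sets to be genuine \emph{sequences of equal length} forming a matching (rather than just sets of unequal sizes), and to confirm that nothing in \cite[Lem.~3.4]{BCCGNSU} secretly used commutativity; I expect the only use of the group structure is the coordinatewise factorization of the identity in a direct product, which holds for arbitrary $G$, so the proof goes through \emph{mutatis mutandis} as claimed.
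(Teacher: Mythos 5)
Your core mechanism is the right one (and it is indeed the point of \cite[Lem.~3.4]{BCCGNSU}, whose proof this paper does not reproduce): work with $N$-tuples, use positivity coordinatewise, and note that once the integer labels sum to zero in total, each coordinate's label sum must vanish, so condition (1) of the border matching forces $i_\ell=j_\ell=k_\ell$ in every coordinate. Nothing in this uses commutativity, so the ``mutatis mutandis'' claim is fine.

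The gap is in how you select the index sets. A multiplicative matching in $G^N$ requires a \emph{single} common index set $T$ with $s_{\vec\imath}\,t_{\vec\imath}\,u_{\vec\imath}=1$ for every $\vec\imath\in T$ (the $\Leftarrow$ direction of the definition). Your plan picks three \emph{different} maximizing layers $\mathcal I_\alpha$, $\mathcal J_\beta$, $\mathcal K_\gamma$ and then hopes to ``truncate to common cardinality'' or ``reindex''; this cannot work, because pairing $s_{\vec\imath}$ with $t_{\vec\jmath}$ and $u_{\vec k}$ for unrelated tuples destroys the diagonal condition---the only tuples usable as indices are those lying in $\mathcal I_\alpha\cap\mathcal J_\beta\cap\mathcal K_\gamma$, and your separate maximization gives no lower bound on that intersection (also, the ``product of the three sizes'' you compute is not the quantity the lemma bounds; the cardinality of a matching is a single length). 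The fix is a \emph{joint} pigeonhole: for each $\vec\imath\in[m]^N$ the triple $\bigl(A(\vec\imath),B(\vec\imath),C(\vec\imath)\bigr)$ automatically satisfies $A+B+C=0$ (since $a_i+b_i+c_i=0$ on the diagonal of the border matching), and each entry lies in $[-Nt,Nt]$; hence some fixed triple $(\alpha,\beta,\gamma)$ with $\alpha+\beta+\gamma=0$ is attained by at least $m^N/(2Nt+1)^2\ge m^N/(2Nt+1)^3$ tuples. Take $T$ to be this set of tuples and index all three sequences by $T$; your own argument then shows $s_{\vec\imath}t_{\vec\jmath}u_{\vec k}=1$ with $\vec\imath,\vec\jmath,\vec k\in T$ forces $\vec\imath=\vec\jmath=\vec k$, while the diagonal products are $1$ coordinatewise. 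This joint pigeonhole is exactly where the $(2Nt+1)^3$ in the statement comes from; with it, your proof is complete.
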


Just as with ordinary multiplicative matchings, border multiplicative
matchings are a lower bound on slice rank:

\begin{proposition}
If $G$ contains a border multiplicative matching of cardinality $m$, then the
slice rank of the $G$-multiplication tensor (in any characteristic) is at
least $m$.
\end{proposition}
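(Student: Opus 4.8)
The statement is a direct analogue of Proposition~\ref{prop:tao} (Tao's bound, $\slicerank(\mult{G}) \geq m$ for an ordinary multiplicative matching) for \emph{border} multiplicative matchings. The natural approach is to deduce it from the ordinary case using Lemma~\ref{lem:convert-border}, together with the fact that slice rank is multiplicative (or at least sub/supermultiplicative in the right direction) under tensor products. First I would recall that the $G$-multiplication tensor of a direct power satisfies $\mult{G^N} \cong \mult{G}^{\otimes N}$, where $\otimes$ is the usual tensor (Kronecker) product of 3-tensors; this is just the statement that $\F[G^N] \cong \F[G]^{\otimes N}$ as algebras and that the multiplication tensor is compatible with tensor products of algebras. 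Then the key inequality is $\slicerank(\mult{G}^{\otimes N}) \le \slicerank(\mult{G})^N$, i.e.\ slice rank is submultiplicative under tensor powers — this is standard (a slice decomposition of $F$ of rank $r$ yields, by distributing over the $N$ factors, a slice decomposition of $F^{\otimes N}$ of rank $r^N$; one has to be slightly careful that the ``slicing direction'' can be chosen per-coordinate, but since every term of $F^{\otimes N}$ is a product over the $N$ factors of terms each of one of the three types, and slice rank is invariant under permuting which of $X,Y,Z$ plays which role independently in each factor, collecting terms by the resulting type gives the bound).

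Granting these two facts, the proof is short: suppose $G$ has a border multiplicative matching of cardinality $m$. By Lemma~\ref{lem:convert-border}, for every $N$ there is an ordinary multiplicative matching in $G^N$ of cardinality at least $m^N/(2Nt+1)^3$ for a constant $t$ independent of $N$. By Proposition~\ref{prop:tao} applied in $G^N$ (over the field in question, or over any field),
\[
\slicerank(\mult{G^N}) \;\ge\; \frac{m^N}{(2Nt+1)^3}.
\]
On the other hand, by the two facts above,
\[
\slicerank(\mult{G^N}) \;=\; \slicerank(\mult{G}^{\otimes N}) \;\le\; \slicerank(\mult{G})^N.
\]
Combining, $\slicerank(\mult{G})^N \ge m^N/(2Nt+1)^3$, so $\slicerank(\mult{G}) \ge m/(2Nt+1)^{3/N}$. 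Letting $N\to\infty$, the factor $(2Nt+1)^{3/N} \to 1$, and since $\slicerank(\mult{G})$ is a fixed integer not depending on $N$, we conclude $\slicerank(\mult{G}) \ge m$, as claimed. The argument is characteristic-independent because both Proposition~\ref{prop:tao} and Lemma~\ref{lem:convert-border} are, and $\mult{G}$ has $\{0,1\}$ entries so makes sense over any field.

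\textbf{Main obstacle.} The only genuinely non-routine point is justifying submultiplicativity of slice rank under tensor product \emph{and} the identification $\mult{G^N} \cong \mult{G}^{\otimes N}$ with respect to the group-element basis. The identification itself is routine once one writes it out: the product $(g_1,\dots,g_N)(h_1,\dots,h_N) = (g_1h_1,\dots,g_Nh_N)$ shows the structure constant of $G^N$ in the product basis is the product of the structure constants of $G$, which is exactly the Kronecker product of tensors. The submultiplicativity is where I would be most careful: the subtlety is that in $F^{\otimes N}$ a single rank-one slice is a product of $N$ factors, each of which is a slice in one of three directions \emph{for its own coordinate}, so the resulting term is not literally a slice of $F^{\otimes N}$ in one of three directions — but grouping the $N$ factor-directions and using that the slice rank of a tensor $A\otimes B$ where $A$ is a slice in direction $i$ is at most $\slicerank(B)$ (absorb $A$ into the function of the ``separated'' variable), one reduces inductively to a bound of $r^N$. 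I would either cite this as a known fact about slice rank (it appears in Tao's blog post \cite{taoBlog} and in \cite{taoSawin}) or spell out the one-line induction. No other step presents difficulty.
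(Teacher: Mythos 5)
Your proposal is correct and matches the paper's own argument essentially verbatim: both convert the border matching into an ordinary multiplicative matching in $G^N$ via Lemma~\ref{lem:convert-border}, lower-bound $\slicerank(\mult{G^N})$ by Proposition~\ref{prop:tao}, and then use submultiplicativity of slice rank under tensor powers together with $N\to\infty$ to conclude. The extra care you take in justifying $\mult{G^N}\cong \mult{G}^{\otimes N}$ and the submultiplicativity step is a welcome elaboration of a point the paper cites as standard.
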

\begin{proof}[Proof using powers of $G$]
This follows from the fact that slice rank is submultiplicative. Lemma
\ref{lem:convert-border} implies that there is a tricolored sum free set of
cardinality $m^N/\poly(N)$ in $G^N$, and thus the slice rank of $G^N$ is at least
$m^N/\poly(N)$. But this implies the slice rank of $G$ is at least
$(m^N/\poly(N))^{1/N}$ (by submultiplicitivity), which approaches $m$ as $N$
tends to infinity.
\end{proof}

To prove tightness of our results we need to construct large border
multiplicative matchings. Our starting point is the case of the cyclic group
$\Z/m\Z$. Here we have the following construction:

\begin{proposition} \label{prop:cyclicmatching}
There is a border multiplicative matching in $\Z/m\Z$ of
cardinality at least $\lceil m/2 \rceil$.
\end{proposition}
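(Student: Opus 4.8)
The plan is to exhibit an explicit border multiplicative matching in $\Z/m\Z$ of the required size by a "diagonal" construction, in the spirit of the classical cap-set-type lower bounds. Recall that a border multiplicative matching in $G$ consists of sequences $((s_i,a_i)), ((t_j,b_j)), ((u_k,c_k))$ in $G\times\Z$ such that $s_it_ju_k=1$ and $a_i+b_j+c_k=0$ happens iff $i=j=k$, and such that $s_it_ju_k=1$ forces $a_i+b_j+c_k\ge 0$. The idea is to take indices parametrized by a set of residues, set the group elements to be $s_i = i$, $t_j = j$, $u_k = -2k$ (so that $s_it_ju_k=0$ in $\Z/m\Z$ exactly when $i+j\equiv 2k \pmod m$), and choose the integer weights $a_i,b_j,c_k$ to be a careful lift of these residues to $\Z$ so that the integer equation $a_i+b_j+c_k=0$ pins down $i=j=k$ while never becoming negative on solutions of the group equation.

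Concretely, first I would restrict to indices in a set $R\subseteq\{0,1,\dots,m-1\}$ of size $\lceil m/2\rceil$ — for instance every residue in an arithmetic-progression pattern, or the "middle half," chosen so that the only way to have $i+j\equiv 2k\pmod m$ with $i,j,k\in R$ is the genuine equality $i=j=k$ (this is where a sum-free-type property of $R$ in $\Z/m\Z$ is used; a set of size about $m/2$ suffices, e.g. one designed so that $i+j=2k$ has no nontrivial solutions, meaning $R$ is "progression-free of length 3" in the additive sense but we only need it at the exact threshold $\lceil m/2\rceil$, which is achievable). Then I would define the integer weights as the honest integer values $a_i=i$, $b_j=j$, $c_k=-2k$ for $i,j,k\in R$, viewed in $\Z$ rather than mod $m$; then $a_i+b_j+c_k = i+j-2k$, which is $0$ in $\Z$ iff $i+j=2k$ as integers, and for $i,j,k\in\{0,\dots,m-1\}$ that forces $i=j=k$ by the choice of $R$. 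Positivity must be checked: when $s_it_ju_k=1$ in $\Z/m\Z$, i.e. $i+j\equiv 2k\pmod m$, I need $i+j-2k\ge 0$; since $i+j-2k$ is congruent to $0\bmod m$ and lies in a bounded range, I would choose $R$ inside an interval short enough (length $< m$, suitably positioned) to guarantee $i+j-2k\in\{0\}$ or at least $\ge 0$ on all such triples — this is the delicate bookkeeping step and the precise window for $R$ has to be tuned to make positivity hold while keeping $|R|=\lceil m/2\rceil$.

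The main obstacle I anticipate is exactly this tension: making $R$ large enough ($\lceil m/2\rceil$ elements) while simultaneously (a) avoiding nontrivial solutions of $i+j\equiv 2k$ among elements of $R$, and (b) ensuring that the few "wraparound" solutions of the mod-$m$ equation that do exist have nonnegative integer weight sum. One clean way to resolve it, which I would try first, is to take $R=\{0,1,\dots,\lceil m/2\rceil-1\}$ together with the substitution $u_k$ corresponding to a reflected residue, so that the group equation $s_it_ju_k=1$ becomes $i+j = 2k$ or $i+j=2k+m$; the first case gives $i=j=k$ on $R$, and in the second case one checks $i+j-2k=m>0$, so positivity holds and the only equality case is the diagonal. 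This gives cardinality $\lceil m/2\rceil$ as claimed. I would then double-check the two border-matching axioms line by line on this construction, and that completes the proof.
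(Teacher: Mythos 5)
There is a genuine gap, and it sits exactly where you flagged the ``delicate bookkeeping.'' Your weights are linear in the index: $a_i=i$, $b_j=j$, $c_k=-2k$, so on any solution of the group equation the weight sum is $i+j-2k$, which vanishes on \emph{every} integer solution of $i+j=2k$, not just the diagonal one. Consequently your fallback construction with $R=\{0,1,\dots,\lceil m/2\rceil-1\}$ violates condition (1) of the definition: for example $i=0$, $j=2$, $k=1$ gives $s_it_ju_k=1$ and $a_i+b_j+c_k=0$ with $i,j,k$ not all equal, so the claim that ``the first case gives $i=j=k$ on $R$'' is false. Your other escape route---choosing $R$ of size $\lceil m/2\rceil$ with no nontrivial solutions of $i+j=2k$---asks for a $3$-term-progression-free subset of $\{0,\dots,m-1\}$ of density $1/2$, which does not exist once $m$ is large (Roth's theorem; even elementary counting rules out density $1/2$), and the proposition is needed for all $m$, in particular $m=p^n$ with $p^n\to\infty$, so you cannot restrict to small $m$.

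The missing idea is to make the weights strictly convex rather than linear. The paper's proof uses the same group elements $s_i=i$, $t_j=j$, $u_k=-2k$ with indices running over the full interval $\{0,1,\dots,\lceil m/2\rceil-1\}$, but with weights $a_i=i^2$, $b_j=j^2$, $c_k=-2k^2$. Because all indices are below $\lceil m/2\rceil$, the congruence $i+j\equiv 2k \pmod m$ already forces $i+j=2k$ in $\Z$ (the quantity $i+j-2k$ has absolute value less than $m$), so no wraparound case ever arises---this disposes of your positivity worry. Writing $i=k-c$ and $j=k+c$, the weight sum is $i^2+j^2-2k^2=2c^2\ge 0$, with equality iff $c=0$, i.e.\ iff $i=j=k$. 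Strict convexity is what lets the interval itself, rather than a sparse progression-free set, serve as the index set, and that is how the cardinality $\lceil m/2\rceil$ is achieved.
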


\begin{proof}
Identify $G = \Z/m\Z$ with $\{0,1,2, \ldots, m-1\}$ in the natural
way, and let $m' = \lceil m/2 \rceil$. Define
\[x_i = (i, i^2) \in G \times \Z,\;\;\;\; y_j = (j, j^2) \in G \times
\Z,\;\;\;\; z_k = (-2k, -2k^2) \in G \times \Z,\] where $i,j,k \in \{0,1,
\ldots, m'-1\}$. Note that $(i + j -2k, i^2 + j^2 - 2k^2) = (0,0)$ when $i =
j = k$. In the other direction, if $i + j = 2k$, then $i = k - c$ and $j = k
+ c$ for some $c \in \{-(m'-1), \ldots, (m'-1)\}$. We thus have $i^2 + j^2 -
2k^2 = 2c^2$, which is always non-negative, and equals $0$ iff $i = j = k$,
as required.
\end{proof}

Multiplicative matchings behave well when taking extensions:

\begin{lemma}
\label{lem:extension} Suppose we have a short exact sequence $1 \rightarrow N \rightarrow G
\rightarrow G/N \rightarrow 1$.  Let $\{s_i\}, \{t_j\}, \{u_k\}$ be a
multiplicative matching of cardinality $m$ in $N$ and let $\{x_{i'}\},
\{y_{j'}\}, \{z_{k'}\}$ be a multiplicative matching of cardinality $m'$ in
$G/N$. Then there is a multiplicative matching of cardinality $mm'$ in $G$.
The same statement holds with ``multiplicative matching'' replaced everywhere
by ``border multiplicative matching''.
\end{lemma}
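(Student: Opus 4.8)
The plan is to lift the multiplicative matching from the quotient and combine it with the one inside the normal subgroup, indexing the new matching by pairs. Fix a set-theoretic section $\sigma\colon G/N \to G$ of the projection $\pi\colon G \onto G/N$, so that $\pi(\sigma(g)) = g$ for all $g \in G/N$. Given the matching $\{s_i\},\{t_j\},\{u_k\}$ of cardinality $m$ in $N$ and the matching $\{x_{i'}\},\{y_{j'}\},\{z_{k'}\}$ of cardinality $m'$ in $G/N$, I would define the candidate matching in $G$ indexed by pairs $(i,i')$, $(j,j')$, $(k,k')$ by something like
\[
S_{(i,i')} = s_i\,\sigma(x_{i'}),\qquad
T_{(j,j')} = \sigma(x_{i'})^{-1}\,t_j\,\sigma(x_{i'})\cdot(\text{correction}),\qquad \dots
\]
but the cleaner route is to push everything through the quotient first. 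Precisely: I would set $S_{(i,i')}$, $T_{(j,j')}$, $U_{(k,k')}$ to be elements of $G$ whose images under $\pi$ are $x_{i'}$, $y_{j'}$, $z_{k'}$ respectively, chosen so that the ``$N$-part'' records the inner matching. Because $N$ is normal, conjugating the $N$-elements $s_i,t_j,u_k$ by section elements keeps them in $N$; and one absorbs the ``error terms'' $\sigma(x_{i'})\sigma(y_{j'})\sigma(z_{k'})$ — which lie in $N$ exactly when $x_{i'}y_{j'}z_{k'}=1$ in $G/N$ — into a re-indexed copy of the $N$-matching.

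The key steps, in order: (1) Apply $\pi$ to the equation $S_{(i,i')}T_{(j,j')}U_{(k,k')} = 1$: since $\pi$ is a homomorphism this forces $x_{i'}y_{j'}z_{k'} = 1$ in $G/N$, hence $i' = j' = k'$ by the matching property in $G/N$. (2) Once $i'=j'=k'$, the product $S_{(i,i')}T_{(j,i')}U_{(k,i')}$ lies in $N$ and, after the conjugations and the normalizing correction are accounted for, equals a product of the form (conjugate of $s_i$)(conjugate of $t_j$)(conjugate of $u_k$) — which is a multiplicative matching in $N$ because conjugation by a fixed element is an automorphism of $N$ and multiplicative matchings are preserved under applying a bijection / automorphism to all three sequences simultaneously (one must check that all three sequences get conjugated by the \emph{same} element, or else arrange the definition so the conjugations cancel). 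This yields $i=j=k$. (3) Conversely, when $(i,i')=(j,j')=(k,k')$ the product is $1$ by construction. This gives a multiplicative matching of cardinality $mm'$. (4) For the border case, repeat verbatim working in $G\times\Z$, $N\times\Z$, $(G/N)\times\Z$ with the $\Z$-labels simply added; the positivity condition $a_i+b_j+c_k\ge 0$ (resp.\ the equality-iff condition) is inherited because the $\Z$-components of the combined matching are just sums of the $\Z$-components of the two given ones, and $\pi$ acts as the identity on the $\Z$-factor.

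The main obstacle is bookkeeping the conjugations so that the $N$-part of $S_{(i,i')}T_{(j,j')}U_{(k,k')}$ is \emph{literally} a multiplicative matching in $N$ and not merely ``a matching up to a fixed permutation that depends on $i'$.'' The clean fix is: define $T_{(j,j')} = \sigma(y_{j'})^{-1}\,t_j\,\sigma(y_{j'})^{?}$... more robustly, choose the lifts so that $S_{(i,i')} = s_i\,\sigma(x_{i'})$, $T_{(j,j')} = \sigma(x_{i'})^{-1} t_j \sigma(x_{i'}) \cdots$ — the honest approach is to first reduce to the split case $G = N \rtimes (G/N)$ by noting the argument only uses a section, then in the semidirect product write $S_{(i,i')} = (s_i, x_{i'})$, $T_{(j,j')} = (t_j', y_{j'})$, $U_{(k,k')} = (u_k', z_{k'})$ where $t_j', u_k'$ are the $N$-elements $t_j,u_k$ twisted by the action, chosen exactly so that the $N$-coordinate of the triple product collapses to $s_i t_j u_k$ (up to a global automorphism of $N$) precisely when $x_{i'}y_{j'}z_{k'}=1$. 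Verifying that this twisting is consistent — i.e.\ that the needed automorphisms of $N$ all coincide once $i'=j'=k'$ — is the one calculation that requires care; everything else is formal.
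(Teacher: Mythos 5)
Your overall strategy is the same as the paper's: lift the quotient matching via a set-theoretic section, conjugate the $N$-parts so the product telescopes, project to $G/N$ to force $i'=j'=k'$, then fall back on the matching in $N$. But the proposal stops exactly where the work is. You never actually write down the lifted triples; your tentative definition $T_{(j,j')}=\sigma(x_{i'})^{-1}t_j\sigma(x_{i'})\cdots$ is not even well-formed, since an element indexed by $(j,j')$ cannot depend on $i'$; and the fallback of ``reducing to the split case $G=N\rtimes(G/N)$'' is not available, because the extension need not split (only a section exists, which is not the same as a splitting homomorphism). You also leave the error term $\sigma(x_{i'})\sigma(y_{j'})\sigma(z_{k'})$ unresolved: ``absorbing'' a nontrivial $n\in N$ into the inner matching does not work, since $\{s_i\},\{t_j\},\{u_kn^{-1}\}$ is in general no longer a multiplicative matching.

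Both difficulties have the same short fix, which is what the paper does. Take arbitrary lifts $\overline{x_{i'}},\overline{y_{j'}}$ and \emph{define} $\overline{z_{k'}}=(\overline{x_{k'}}\,\overline{y_{k'}})^{-1}$, so the error term is identically $1$; then set
\[
f_{i,i'}=s_i\,\overline{x_{i'}},\qquad
g_{j,j'}=\bigl(\overline{x_{j'}}^{\,-1}t_j\,\overline{x_{j'}}\bigr)\overline{y_{j'}},\qquad
h_{k,k'}=\bigl((\overline{x_{k'}}\,\overline{y_{k'}})^{-1}u_k(\overline{x_{k'}}\,\overline{y_{k'}})\bigr)\overline{z_{k'}},
\]
conjugating each $N$-element by the lift carrying its \emph{own} primed index. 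This is well-defined, the conjugates stay in $N$ by normality (so each factor still projects to $x_{i'}$, $y_{j'}$, $z_{k'}$ respectively, and your step (1) applies), and once $i'=j'=k'$ the conjugators match up and the product collapses exactly to $s_it_ju_k$; your steps (2) and (3) then go through. One further caution on the border case: ``just summing the labels'' is not quite enough, because if the product is $1$ in $G$ while $i',j',k'$ are not all equal, the quotient labels sum to something strictly positive but the $N$-labels may sum to something negative. First rescale the quotient labels by a constant exceeding $\max|a_i+b_j+c_k|$ (rescaling preserves both border conditions); then positivity of the combined labels follows. The paper is equally terse on this last point, but your write-up should not inherit the gap.
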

\begin{proof}
Let $\overline{x_{i'}}$ and $\overline{y_{j'}}$ be arbitrary lifts from $G/N$
to $G$. Recall that we know $x_{k'}y_{k'}z_{k'} = 1$ in $G/N$, so we can
define $\overline{z_{k'}} = (\overline{x_{k'}}\cdot \overline{y_{k'}})^{-1}$
and still satisfy that $\overline{z_{k'}}$ mod $N$ is $z_{k'}$. We needed to
go to the extra effort of carefully defining this last lift so that
$\overline{x_{k'}}\cdot \overline{y_{k'}}\cdot \overline{z_{k'}} = 1$ in $G$,
for all $k'$.

Define $f_{i, i'} = s_i\overline{x_{i'}}$, $g_{j, j'} =
t_j^{(\overline{x_{j'}})^{-1}}\overline{y_{j'}}$ and $h_{k,k'} =
u_k^{(\overline{x_{k'}}\cdot\overline{y_{k'}})^{-1}}\overline{z_{k'}}$.
Consider the equation
\[f_{i,i'}g_{j,j'}h_{k,k'} =
s_i\overline{x_{i'}}t_j^{(\overline{x_{j'}})^{-1}}\overline{y_{j'}}u_k^{(\overline{x_{k'}}\cdot\overline{y_{k'}})^{-1}}\overline{z_{k'}}
= 1.\] Mod $N$, this reduces to the equation $x_{i'}y_{j'}z_{k'} = 1$ in
$G/N$, and so $i' = j' = k'$. The equation then simplifies to $s_it_ju_k = 1$
in $G$ and so $i = j = k$ as required.

To prove the result for border multiplicative matchings we do the same as
above and sum the second coordinates (the ones in $\Z$) when combining the
two border multiplicative matchings in $N$ and $G/N$ to obtain a border
multiplicative matching in $G$.
\end{proof}

We can now assert that our result for $p$-groups (in Theorem
\ref{thm:pgroups}) cannot be improved (up to the constant in the
$\Omega(\cdot)$ notation).

\begin{theorem}
If $G$ is a group of order $p^n$ (with $p$ prime),  then
$G$ contains a border multiplicative matching of cardinality at
least $p^n/2^n$.
\end{theorem}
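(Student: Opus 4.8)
The plan is to induct on $n$, peeling off a cyclic quotient of order $p$ and combining the cyclic base case (Proposition~\ref{prop:cyclicmatching}) with the extension lemma (Lemma~\ref{lem:extension}).

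For $n=0$ the group is trivial and the claim is vacuous (a single element gives cardinality $1 = p^0/2^0$). Suppose now $|G| = p^n$ with $n \geq 1$. Since $G$ is a nontrivial $p$-group, the quotient $G/[G,G]G^{(p)}$ is a nontrivial elementary abelian $p$-group and hence has a subgroup of index $p$; its preimage is a normal subgroup $N \unlhd G$ with $G/N \cong \Z/p\Z$ and $|N| = p^{n-1}$. (Alternatively one may take $N$ to be a central subgroup of order $p$, which exists by the class equation, and swap the roles of $N$ and $G/N$ below; either choice feeds Lemma~\ref{lem:extension} equally well.) By Proposition~\ref{prop:cyclicmatching}, $G/N \cong \Z/p\Z$ contains a border multiplicative matching of cardinality $\lceil p/2 \rceil \geq p/2$, and by the inductive hypothesis $N$ contains a border multiplicative matching of cardinality at least $p^{n-1}/2^{n-1}$. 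Applying the border-matching half of Lemma~\ref{lem:extension} to the short exact sequence $1 \to N \to G \to G/N \to 1$ yields a border multiplicative matching in $G$ of cardinality at least $\tfrac{p^{n-1}}{2^{n-1}} \cdot \tfrac{p}{2} = \tfrac{p^n}{2^n}$, completing the induction.

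I do not expect any real obstacle here: the only point needing a word of care is the existence of an index-$p$ normal subgroup in every nontrivial $p$-group, which is standard (and in any case sidestepped by the central-subgroup variant). Combined with the preceding proposition that border multiplicative matchings lower-bound slice rank, this shows $\slicerank(\mult{G}) \geq p^n/2^n$, which matches the $p^n/e^{\Omega(n)}$ upper bound of Theorem~\ref{thm:pgroups} (in the regime where that theorem applies, e.g.\ bounded exponent and bounded length) up to the constant in the exponent, establishing the claimed tightness.
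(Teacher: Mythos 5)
Your proof is correct and is essentially the paper's own argument: the paper takes a normal series $1 \unlhd H_1 \unlhd \dotsb \unlhd H_n = G$ with all quotients $\Z/p\Z$ and repeatedly applies Lemma~\ref{lem:extension} together with Proposition~\ref{prop:cyclicmatching}, which is exactly your induction unrolled. The only cosmetic difference is that you peel off one index-$p$ (or central order-$p$) subgroup at a time rather than fixing the whole series in advance.
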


\begin{proof}
Every finite $p$-group $G$ has a sequence of normal subgroups $1
\unlhd H_1 \unlhd H_2 \unlhd \dotsb \unlhd H_n = G$ such that $H_i /
H_{i-1} \cong \Z/p\Z$ for all $i$. By repeated application of the Lemma \ref{lem:extension} we find
a border multiplicative matching in $G$ of the specified size.
\end{proof}

For nilpotent groups $G$, we obtain a lower bound on {\em slice rank}
that matches our bound in Theorem \ref{thm:nilpotent}:

\begin{theorem}
If $G$ is a nilpotent group of order $p^nr$ with $(p,r)=1$, then the
slice rank of the $G$-multiplication tensor in characteristic $p$ is at
least $3 p^nr/2^{n+2}$.
\end{theorem}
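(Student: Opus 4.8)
The plan is to split $G$ along its Sylow structure and combine a near-optimal border matching in the Sylow $p$-subgroup with a lower bound on the characteristic-$p$ slice rank of the group algebra of the $p$-complement, the point being that the latter is semisimple when $p$ is invertible. Since $G$ is nilpotent, $G\cong P\times H$ where $P$ is the Sylow $p$-subgroup ($|P|=p^n$) and $H$ is the product of the remaining Sylow subgroups ($|H|=r$, coprime to $p$); correspondingly $\mult{G}=\mult{P}\otimes\mult{H}$ in the basis of group elements, and more generally $\mult{G^N}=\mult{P^N}\otimes\mult{H^N}$.

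First I would establish that $\slicerank_{\F_p}(\mult{K})\ge\tfrac34|K|$ for \emph{any} group $K$ with $p\nmid|K|$. By Maschke's theorem $\F_p[K]$ is semisimple, say $\F_p[K]\cong\prod_i M_{d_i}(L_i)$ with each $L_i/\F_p$ a finite extension, so $\mult{K}$ is the direct sum, with pairwise disjoint coordinate supports, of the multiplication tensors of the blocks. Slice rank is additive over such direct sums (cf.\ \cite{taoSawin}) and cannot increase under field extension, and $M_{d}(L)\otimes_{\F_p}\overline{\F_p}\cong M_{d}(\overline{\F_p})^{[L:\F_p]}$, so $\slicerank_{\F_p}(\mult{K})\ge\sum_i [L_i:\F_p]\cdot\slicerank_{\overline{\F_p}}(\langle d_i,d_i,d_i\rangle)$, where $\langle d,d,d\rangle$ denotes the $d\times d$ matrix multiplication tensor. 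Feeding in the bound $\slicerank(\langle d,d,d\rangle)\ge\tfrac34 d^2$ (which is the full value $d^2$ when $d=1$, i.e.\ for the abelian blocks) together with $\sum_i d_i^2[L_i:\F_p]=\dim_{\F_p}\F_p[K]=|K|$ yields $\slicerank_{\F_p}(\mult{K})\ge\tfrac34|K|$. Applied to $K=H^N$ (which has order $r^N$, still coprime to $p$), this gives $\slicerank_{\F_p}(\mult{H^N})\ge\tfrac34 r^N$ for every $N$.

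Next I would bring in $P$. By the preceding theorem $P$ contains a border multiplicative matching of cardinality at least $p^n/2^n$, so by Lemma~\ref{lem:convert-border} the power $P^N$ contains a genuine multiplicative matching of cardinality $M_N\ge(p^n/2^n)^N/\poly(N)$. A genuine matching of cardinality $M_N$ picks out a restriction of $\mult{P^N}$ equal to the $M_N\times M_N\times M_N$ diagonal tensor (the matching elements are automatically distinct in each of the three sequences); tensoring with all of $\mult{H^N}$ exhibits a restriction of $\mult{G^N}=\mult{P^N}\otimes\mult{H^N}$ equal to the coordinate-disjoint direct sum of $M_N$ copies of $\mult{H^N}$. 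Hence, by additivity again, $\slicerank_{\F_p}(\mult{G^N})\ge M_N\cdot\slicerank_{\F_p}(\mult{H^N})\ge\tfrac34\,(p^nr/2^n)^N/\poly(N)$. Finally, submultiplicativity of slice rank under Kronecker product gives $\slicerank_{\F_p}(\mult{G})\ge\slicerank_{\F_p}(\mult{G^N})^{1/N}$, and letting $N\to\infty$ the factors $(\tfrac34)^{1/N}$ and $\poly(N)^{-1/N}$ tend to $1$, so $\slicerank_{\F_p}(\mult{G})\ge p^nr/2^n$ — even a shade stronger than the claimed $3p^nr/2^{n+2}$. (If one prefers a self-contained argument that produces exactly the stated constant without a limiting step, one can instead prove the relative statement ``a border multiplicative matching of cardinality $m$ in $P$ implies $\slicerank_{\F_p}(\mult{P}\otimes A)\ge m\cdot\slicerank_{\F_p}(A)$ for any tensor $A$'' directly, and apply it with $A=\mult{H}$.)

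The only non-formal ingredient, and the step I expect to be the main obstacle, is the slice-rank lower bound $\slicerank(\langle d,d,d\rangle)\ge\tfrac34 d^2$ for the matrix multiplication tensor (equivalently, the triangle tensor of the complete graph with a loop at each of $d$ vertices): the abelian, one-dimensional blocks of $\F_p[H]$ contribute their full dimension with no loss, so any loss is confined to the nonabelian matrix blocks. Everything else — additivity of slice rank over coordinate-disjoint direct sums, its monotonicity under field extension, its submultiplicativity under $\otimes$, and the conversion of border matchings to genuine matchings in powers via Lemma~\ref{lem:convert-border} — is standard; see \cite{taoBlog,taoSawin,BCCGNSU}.
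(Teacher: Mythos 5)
Your decomposition is the same one the paper uses ($G\cong P\times H$ with $|P|=p^n$, $|H|=r$ coprime to $p$; a border matching of size $p^n/2^n$ in $P$; semisimplicity of $\F_p[H]$ handling the complement), but as written the argument rests on two load-bearing claims that you do not establish, and one of them is precisely the delicate point the paper's machinery exists to avoid. The first is the bound $\slicerank(\langle d,d,d\rangle)\ge \tfrac34 d^2$, which you correctly identify as the "main obstacle" but then leave unproven; a theorem statement cannot be discharged by flagging its key ingredient as an obstacle. The paper supplies this ingredient in two ways: via the classical fact \cite{BCS} that $\langle d,d,d\rangle$ has a \emph{border} diagonal (border multiplicative matching) of size $3d^2/4$, and, in Appendix~\ref{sec:other}, via the stronger statement that the flat rank and slice rank of $\langle d,d,d\rangle$ are \emph{full} ($=d^2$), proved using Flanders' theorem \cite{flanders}.

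The second gap is your repeated appeal to "additivity of slice rank over coordinate-disjoint direct sums (cf.\ \cite{taoSawin})." Subadditivity is easy, but the direction you need is \emph{super}additivity, and that is not available off the shelf from \cite{taoSawin}; it is exactly why this paper introduces flat rank, proves that flat rank (unlike slice rank, a priori) is additive (Theorem~\ref{thm:flat_additive}), and uses $\frank\le\slicerank$ as the additive lower bound. (Note that a restriction-to-subspaces argument does not give superadditivity directly, because a codimension-$c$ subspace of $\F^{Z_1}\oplus\F^{Z_2}$ can project onto subspaces of codimension up to $c$ in \emph{each} summand.) Both of your uses of superadditivity --- for $\F_p[K]\cong\prod_i M_{d_i}(L_i)$ and for $\bigoplus_{i=1}^{M_N}\mult{H^N}$ --- can be repaired by replacing slice rank with flat rank throughout (Corollary~\ref{cor:semisimple} already gives $\slicerank_{\F_p}(\mult{H})=|H|$, in fact), after which your restriction-plus-$N$th-root scheme does go through and even yields the slightly better constant $p^nr/2^n$. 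The paper's own proof sidesteps all of this by staying entirely in the language of border multiplicative matchings: it combines the size-$p^n/2^n$ border matching in $P$ with the size-$3r/4$ border matching in the Wedderburn basis of $\F_p[H]$ by tensoring (as in Lemma~\ref{lem:extension}), producing a single border matching of size $3p^nr/2^{n+2}$ in $\F_p[G]$, and then invokes once the proposition that a border matching of cardinality $m$ forces slice rank at least $m$.
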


\begin{proof}
Finite nilpotent groups are direct products of $p$-groups. By the previous
lemma, the $p$-Sylow factor $P$ has a border multiplicative matching of
cardinality at least $p^n/2^n$. The complement factor $H$ of order $r$ has
order relatively prime to $p$, and so the group algebra $\F_p[H]$ is
semi-simple, and thus is isomorphic to the product of matrix algebras of
dimensions $d_i$. It is known that the tensor of $d_i \times d_i$ matrix
multiplication has a border multiplicative matching of cardinality at least
$3d_i^2/4$ (see \cite{BCS}), and so the tensor of $H$-multiplication has a
border multiplicative matching (expressed in the Fourier basis) of
cardinality at least $(3/4)\cdot \sum_i d_i^2 = 3|H|/4$. These matchings in
the tensor of $\F_p[P]$-multiplication and the tensor of
$\F_p[H]$-multiplication can be combined---essentially just by tensoring them
together---to yield a border multiplicative matching in $\F_p[G]$ whose size
is the product of their sizes (similar to the proof of Lemma
\ref{lem:extension}). The theorem follows.
\end{proof}

\mysection{Other results on slice rank of groups} \label{sec:other}

Given the importance of slice rank for informing us about STPP constructions
in groups (or lack thereof), here we develop some tools and results useful
for understanding the slice rank of groups in general. In particular, we
introduce a stronger variant of slice rank that we call ``flat rank,'' which
turns out to be exactly additive. We use this to show that the slice rank of
matrix multiplication is full (i.e., $n^2$), that all semisimple group algebras
have full slice rank (equal to $|G|$)---and thus getting nontrivial upper
bounds requires working in characteristic dividing $|G|$---and finally that
every cyclic group has full slice rank in all characteristics, showing that
at least one case of Proposition~\ref{prop:augVSplcs} gave exactly the
correct value of the slice rank.

In addition to tying up some loose ends, we believe that these tools and
results may be useful for future endeavors.

\mysubsection{Flat rank} Note that, given a slice rank decomposition of a
tensor $t$ of the form
\[
\sum_{i=1}^a \alpha_i(x) \beta_i(y,z) + \sum_{j=1}^b \gamma_j(y) \delta_j(x,z) + \sum_{k=1}^c \eta_k(z) \theta_k(x,y),
\]
there is a codimension at most $c$ subspace of the $z$'s on which every $\eta_k$
vanishes. On this subspace, the resulting restricted matrix can be written as
a sum of at most $ a+b$ rank 1 matrices (just plug the $z$'s into the above
decomposition), and hence has rank at most $a+b$. Thus, in order to prove a
lower bound of $\ell$ on the slice rank of $t$, it suffices to show that
whenever $t$ is restricted to a codimension $c$ subspace of the $z$'s (for
any $c$, since now we do not imagine we know the above decomposition), the
rank of the resulting matrix is at least $\ell-c$. It turns out that this
method of proving lower bounds has several nice properties, and in particular
is exactly additive under direct sum of tensors. To formalize this, rather
than talk about ``lower bounds proven by this method,'' we capture this
method in a definition.

Given a tensor $t \colon U \otimes V \otimes W \to \F$ and a vector $w \in
W$, let $t_w$ denote the restricted tensor $t_w \colon U \otimes V \to \F$
defined by $t_w(u \otimes v) = t(u \otimes v \otimes w)$.

\begin{definition}[Flat rank]
A tensor $t \in \F^X \otimes \F^Y \otimes \F^Z$ has \emph{flat rank} at most
$r+c$ if there is a codimension $c$ subspace $V \leq \F^Z$ such that
$\Rank(t_v) \leq r$ for every $v \in V$. The flat rank of $t$, denoted
$\frank(t)$, is the minimum of $r+c$ over all such choices of subspace.
\end{definition}

As expected from the discussion preceding the definition, we have the
following relationship between flat rank and slice rank, which we restate
here for reference:

\begin{observation}
For any tensor $t$, $\frank(t) \leq \slicerank(t)$.
\end{observation}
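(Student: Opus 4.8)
The statement to prove is the observation that $\frank(t) \le \slicerank(t)$ for any tensor $t$. The plan is to take an optimal slice rank decomposition of $t$ and exhibit a subspace of $\F^Z$ witnessing the flat rank bound. Concretely, suppose $\slicerank(t) = r$ and write
\[
t(x,y,z) = \sum_{i=1}^a \alpha_i(x)\beta_i(y,z) + \sum_{j=1}^b \gamma_j(y)\delta_j(x,z) + \sum_{k=1}^c \eta_k(z)\theta_k(x,y),
\]
with $a + b + c = r$. First I would let $V \le \F^Z$ be the common zero locus of the linear functionals $\eta_1,\ldots,\eta_c$ (viewing each $\eta_k \colon \F^Z \to \F$ as a linear form); this subspace has codimension at most $c$, since it is cut out by at most $c$ linear equations.

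Next I would check that for every $v \in V$, the restricted tensor $t_v \colon \F^X \otimes \F^Y \to \F$ has rank at most $a+b$. Indeed, plugging $z = v$ into the decomposition kills the third group of terms because $\eta_k(v) = 0$ for all $k$, leaving
\[
t_v(x,y) = \sum_{i=1}^a \alpha_i(x)\,\beta_i(y,v) + \sum_{j=1}^b \delta_j(x,v)\,\gamma_j(y),
\]
which is visibly a sum of $a + b$ rank-one matrices (each term is a product of a function of $x$ alone and a function of $y$ alone, once $v$ is fixed). Hence $\Rank(t_v) \le a+b$ for all $v \in V$, so by the definition of flat rank, $\frank(t) \le (a+b) + \codim V \le (a+b) + c = r = \slicerank(t)$.

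There is no real obstacle here; the only point requiring a moment's care is the bookkeeping around which of the three ``slice directions'' plays the role of $Z$ in the definition of flat rank. Since slice rank is symmetric under permuting the three tensor factors (and flat rank can be defined relative to any one of the three factors — the definition singles out $Z$ but the paper has already noted that one may freely permute the factors when reasoning about slice rank), the argument above applies verbatim with the third slice-type terms being the ones ``flattened.'' This completes the proof of the observation.
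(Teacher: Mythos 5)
Your proposal is correct and is essentially the paper's own justification: the observation is presented there as an immediate consequence of the discussion preceding the definition of flat rank, which runs exactly your argument (vanish the $\eta_k$'s on a codimension-$c$ subspace of the $z$-side, then note the restricted matrices have rank at most $a+b$).
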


\begin{observation}[Characterization of flat rank in terms of decompositions] \label{obs:flat_decomp}
A tensor $t$ has flat rank at most $\ell$ iff it can be written in the form
\[
\sum_{i=1}^r \alpha_i (x, z) \beta_i(y,z) + \sum_{k=1}^c \eta_k(z) \theta_k(x,y,z)
\]
with $r+c \leq \ell$.
\end{observation}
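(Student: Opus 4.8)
The plan is to establish the two implications as near-immediate unwindings of the definition of flat rank, using that flat rank (like slice rank) is invariant under $\GL_{|X|}(\F)\times\GL_{|Y|}(\F)\times\GL_{|Z|}(\F)$, so that I am free to pick a convenient basis of $\F^Z$.

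For the forward direction I would start from a witness: if $\frank(t)\le\ell$ there is a codimension-$c$ subspace $V\le\F^Z$ with $\Rank(t_v)\le r$ for all $v\in V$ and $r+c\le\ell$. I would pick a basis $z_1,\dots,z_n$ of $\F^Z$ with $V=\Span\{z_1,\dots,z_{n-c}\}$. For each $j\le n-c$ the slice $t_{z_j}$ has rank $\le r$, so I can write it as $\sum_{i=1}^{r}a^{(j)}_i(x)\,b^{(j)}_i(y)$ (padding with zeros if its rank is smaller), and I would set $\alpha_i(x,z_j):=a^{(j)}_i(x)$, $\beta_i(y,z_j):=b^{(j)}_i(y)$, with $\alpha_i(\cdot,z_j)=\beta_i(\cdot,z_j)=0$ for $j>n-c$. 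For $k=1,\dots,c$ I would take $\eta_k$ to be the coordinate functional dual to $z_{n-c+k}$ and $\theta_k(x,y,z):=t_{z_{n-c+k}}(x,y)$. The resulting expression $\sum_{i=1}^{r}\alpha_i(x,z)\beta_i(y,z)+\sum_{k=1}^{c}\eta_k(z)\theta_k(x,y,z)$ has the same $z_j$-slice as $t$ for every $j$, hence equals $t$ (a $3$-tensor is determined by its slices along one mode), and it has $r$ summands of the first kind and $c$ of the second, with $r+c\le\ell$.

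For the reverse direction I would take a decomposition $t=\sum_{i=1}^{r}\alpha_i(x,z)\beta_i(y,z)+\sum_{k=1}^{c}\eta_k(z)\theta_k(x,y,z)$ with $r+c\le\ell$ and let $V\le\F^Z$ be the common kernel of the functionals $\eta_1,\dots,\eta_c$, so $\codim V\le c$. For $v\in V$, contracting the displayed expression in the $z$-slot annihilates the second sum (each $\eta_k$ vanishes on $v$) and carries each summand of the first sum to a rank-$\le1$ matrix, so $\Rank(t_v)\le r$ for every $v\in V$; therefore $\frank(t)\le r+\codim V\le\ell$.

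I expect the only delicate point to be bookkeeping rather than any inequality: in the forward direction one uses three different bases in the three factors, so one must invoke that a tensor is recovered from its slices along a single mode (here the $\F^Z$ mode) to see that the reassembled expression is genuinely $t$; and, correspondingly, the first-kind summands $\alpha_i(x,z)\beta_i(y,z)$ should be understood as exactly the terms whose contraction in the $z$-slot is a rank-one matrix, which is what makes the reverse direction deliver the bound $r$. Getting this juggling of bases straight is, I think, the whole of the argument.
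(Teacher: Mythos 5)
Your first direction (a flat-rank witness yields a decomposition) is essentially the paper's own argument, down to the adapted basis, the rank-$\le r$ expansion of the slices $t_{z_j}$ for $z_j$ in $V$, and the coordinate functionals $\eta_k$ paired with $\theta_k(x,y,z)=t_{z_{n-c+k}}(x,y)$; that half is fine (note that, as in the paper, the $\theta_k$ you build do not actually depend on $z$, and the decomposition lives in the adapted coordinates rather than the original ones).

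The reverse direction, however, fails at exactly the step you dismiss as bookkeeping. For a general $v=\sum_z v_z e_z\in V$, the contraction of a first-kind summand in the $z$-slot is $\sum_z v_z\,\alpha_i(x,z)\beta_i(y,z)$, a sum of up to $|Z|$ rank-one matrices; it has rank at most $1$ only when $v$ is a coordinate vector, whereas the flat-rank condition quantifies over \emph{every} vector of the subspace $V$. Likewise $\sum_z v_z\,\eta_k(z)\theta_k(x,y,z)$ need not vanish for $v\in\ker\eta_k$ once $\theta_k$ depends on $z$. So the displayed decomposition does not certify $\Rank(t_v)\le r$ for all $v\in V$, and indeed this direction of Observation~\ref{obs:flat_decomp} is false as literally stated: the diagonal tensor $t(x,y,z)=\delta_{x,z}\delta_{y,z}$ has the displayed form with $r=1$, $c=0$ (and any tensor has it with $r=0$, $c=1$, taking $\eta_1\equiv 1$, $\theta_1=t$), yet its flat rank is full by Corollary~\ref{cor:flat-rank-diagonal}. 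In fairness, the paper's own one-sentence proof of this direction has the same gap---the ``argument that originally motivated flat rank'' works for slice-rank decompositions precisely because there one factor of each mixed term, and each $\theta_k$, is independent of $z$---and what both your construction and the paper's converse actually certify is only that the slices $t_{\zeta_i}$ along a \emph{basis} of $V$ have rank $\le r$, which is strictly weaker than the definition. So the equivalence cannot be rescued by more careful juggling of bases; the ``decomposition $\Rightarrow$ flat rank'' implication needs a genuinely stronger hypothesis, and its one use in the paper (subadditivity in Theorem~\ref{thm:flat_additive}) is better proved directly from the definition by taking $V\oplus V'$ as the witness subspace.
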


\begin{proof}
If $t$ has a decomposition as in the statement of the observation, then the
same argument as originally motivated flat rank can be used to upper bound
its flat rank, namely by restricting $t$ to the codimension at most $ c$ subspace of
$z$'s on which every $\eta_k$ vanishes, and noting that the resulting matrix
has rank at most $r$.

Conversely, suppose $\frank_z(t) = \ell' \leq \ell$. Then there exist $r,c$
such that there is a codimension $c$ subspace of the $z$'s, say $V$, with
$\max\{\Rank(t_z) : z \in V\} = r$, and $\ell' = r+c$. Let $\zeta_1, \dotsc,
\zeta_d$ be a basis of $\F^Z$ such that $\Span\{\zeta_1, \dotsc,
\zeta_{|Z|-c}\} = V$. If $Z = \{z_1, \dotsc, z_{|Z|}\}$, there is some
invertible matrix $A$ such that $\zeta_i = \sum_j A_{ij} z_j$, and it is
natural to define $t(x,y,\zeta_i) = \sum_j A_{ij} t(x,y,z_j)$. We note that
flat rank is invariant under such change of bases, so we may now reason about
$t$ in this other basis, namely via its values $t(x,y,\zeta_i)$. Then, by
assumption, for each $1 \leq i \leq d-c$, $\zeta_i \in V$, so there exist
$\alpha_{i,j}, \beta_{i,j}\colon X \to \F$ such that $t(x,y,\zeta_i) =
\sum_{j=1}^r \alpha_{i,j}(x) \beta_{i,j}(y)$. Then we have
\[
t(x,y,\zeta_i) = \sum_{j=1}^r \left(\sum_{k=1}^{d-c} \delta_{ik} \alpha_{k,j}(x)\right)\left(\sum_{\ell=1}^{d-c} \delta_{i\ell} \beta_{\ell,j}(y)\right) + \sum_{j=d-c+1}^d \delta_{ij} t(x,y,\zeta_j).
\]
Now notice that $\sum_{k=1}^{d-c} \delta_{ik} \alpha_{kj}(x)$ only depends on
$x$ and $i$, that is, $x$ and $\zeta_i$, so this whole sum may be rewritten
as some $\hat{\alpha}_j (x, \zeta_i)$, and similarly for the $\beta$'s. Next,
we may re-index the last sum to go from 1 to $c$. Finally, in the second sum,
$\delta_{ij}$ depends only on $\zeta_i$ so is of the form $\eta_j(\zeta_i)$
and $t(x,y,\zeta_j)$ doesn't depend on $\zeta_i$ at all, so it can be
rewritten as $\theta_j(x,y)$, and we are left with
\[
t(x,y,\zeta_i) = \sum_{j=1}^t \hat{\alpha}_j(x, \zeta_i) \hat{\beta}_j(y,\zeta_i) + \sum_{j=1}^c \eta_j(\zeta_i) \theta_j(x,y),
\]
which is, in fact, more restrictive than the desired form (since the
$\theta_j$ were allowed to depend on $\zeta_i$ but they don't).
\end{proof}

\begin{theorem}\label{thm:flat_additive}
Flat rank is additive. That is, given two tensors $t, t'$, $\frank(t \oplus
t') = \frank(t) + \frank(t')$.
\end{theorem}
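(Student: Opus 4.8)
The plan is to prove $\frank(t\oplus t')\le\frank(t)+\frank(t')$ and the reverse inequality separately. Write $t\in\F^X\otimes\F^Y\otimes\F^Z$ and $t'\in\F^{X'}\otimes\F^{Y'}\otimes\F^{Z'}$; then $t\oplus t'$ has $Z$-space $\F^Z\oplus\F^{Z'}$, and for $(w,w')$ in it the slice $(t\oplus t')_{(w,w')}$ is the block-diagonal matrix $t_w\oplus t'_{w'}$, so $\Rank\big((t\oplus t')_{(w,w')}\big)=\Rank(t_w)+\Rank(t'_{w'})$. The inequality $\le$ is immediate from this block structure: if $V\le\F^Z$ has $\codim V=c$ with $\Rank(t_v)\le r$ for all $v\in V$ and $r+c=\frank(t)$, and $V'\le\F^{Z'}$ likewise with $r'+c'=\frank(t')$, then $V\oplus V'$ has codimension $c+c'$ and all of its $t\oplus t'$-slices have rank $\le r+r'$, giving $\frank(t\oplus t')\le(r+r')+(c+c')$.

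For the reverse inequality it suffices to show that every subspace $\hat V\le\F^Z\oplus\F^{Z'}$ satisfies
\[
\max_{\hat v\in\hat V}\Rank\big((t\oplus t')_{\hat v}\big)+\codim\hat V\ \ge\ \frank(t)+\frank(t'),
\]
since $\frank(t\oplus t')$ is the minimum of the left-hand side over $\hat V$. Fix $\hat V$ and let $A,A'$ be the images of $\hat V$ under the two coordinate projections. Because $\hat V\subseteq A\oplus A'$ we get $\codim A+\codim A'\le\codim\hat V$ for free. The heart of the matter is the rank bound
\[
\max_{w\in A}\Rank(t_w)+\max_{w'\in A'}\Rank(t'_{w'})\ \le\ \max_{\hat v\in\hat V}\Rank\big((t\oplus t')_{\hat v}\big),
\]
because, writing $a$ and $a'$ for the two maxima on the left, $A$ is a subspace of codimension $\codim A$ on which $\Rank(t_\cdot)\le a$, so $\frank(t)\le a+\codim A$, and similarly for $t'$; combining these with the codimension inequality yields $\max_{\hat V}\Rank((t\oplus t')_\cdot)+\codim\hat V\ge(a+a')+(\codim A+\codim A')\ge\frank(t)+\frank(t')$.

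To prove the rank bound, pick $w_0\in A$ and $w_0'\in A'$ attaining $a$ and $a'$, and pick $\hat v_0,\hat v_1\in\hat V$ whose $\F^Z$- and $\F^{Z'}$-components are $w_0$ and $w_0'$; we may assume $\hat v_0,\hat v_1$ are independent, since otherwise the other component of one of them already forces a slice of rank $a+a'$. On the line $\{\lambda\hat v_0+\mu\hat v_1\}\subseteq\hat V$, the conditions ``$\Rank$ of the $\F^Z$-component $\ge a$'' and ``$\Rank$ of the $\F^{Z'}$-component $\ge a'$'' each cut out a Zariski-open subset of the parametrizing $\mathbb{P}^1$ (matrix rank is lower semicontinuous), and these subsets are nonempty as they contain $\hat v_0$ and $\hat v_1$; over an infinite field two nonempty open subsets of $\mathbb{P}^1$ meet, and any $\hat v$ in the intersection satisfies $\Rank((t\oplus t')_{\hat v})=\Rank(t_\cdot)+\Rank(t'_\cdot)\ge a+a'$.

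The step I expect to be the main obstacle is making this genericity argument work over small finite fields---precisely the ones relevant to group algebras $\F_p[G]$---since there two nonempty open subsets of $\mathbb{P}^1(\F_p)$ can fail to share an $\F_p$-point (each may omit up to $a$, resp.\ $a'$, of the $p+1$ points). The natural fix is to reduce to an infinite (e.g.\ algebraically closed) field: an optimal subspace for $\frank_{\overline{\F}_p}$ is defined over some finite extension $\F_{p^k}$ and its defining rank inequalities automatically hold at every $\F_{p^k}$-point, which pins down $\frank$ over all sufficiently large extensions; one must then argue that $\frank$ over $\F_p$ itself already equals this value (equivalently, that flat rank is unchanged under field extension). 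Carrying out this descent carefully---or alternatively reorganizing the proof as an induction on $\dim\F^Z+\dim\F^{Z'}$ that restricts $t$ (resp.\ $t'$) to a hyperplane whenever $A\ne\F^Z$ (resp.\ $A'\ne\F^{Z'}$), using $\frank(t)\le 1+\frank(t|_H)$, and treating the remaining ``doubly surjective'' case by hand---is the one genuinely delicate point; the block-diagonal slice description and the projection plus semicontinuity idea take care of everything else.
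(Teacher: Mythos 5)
Your proof is correct and takes essentially the same route as the paper's: subadditivity by direct-summing optimal subspaces, and superadditivity by projecting a given $\hat V\leq \F^Z\oplus\F^{Z'}$ onto the two factors, using $\codim A+\codim A'\le\codim\hat V$, and intersecting the two nonempty Zariski-open loci where the component slices attain maximal rank (you do this on the pencil through $\hat v_0,\hat v_1$, the paper generically on all of $V$ --- a cosmetic difference). Your caveat about finite fields matches the paper's own remark that this genericity step requires an infinite field and that for the intended applications one may work over the algebraic closure.
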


\begin{proof}
Subadditivity---$\frank(t \oplus t') \leq \frank(t) + \frank(t')$---follows
by simply combining the representations of $t, t'$ as in
Observation~\ref{obs:flat_decomp}.

Conversely, let $r_1 = \frank(t)$ and $r_2 = \frank(t')$. Suppose that $t$ is
supported on $X \times Y \times Z$ and $t'$ is supported on $X' \times Y'
\times Z'$, with $X,X'$ disjoint, $Y,Y'$ disjoint, and $Z,Z'$ disjoint. Given
any codimension $c$ subspace $V \leq (\F^Z \oplus \F^{Z'})$, let $V|_Z$
denote the image of $V$ projected into $\F^Z$ along $\F^{Z'}$, and
analogously for $V|_{Z'}$. Let $c_Z$ denote the codimension of $V|_Z$ in
$\F^Z$, and analogously for $c_{Z'}$. Since $\dim V \leq \dim V|_Z + \dim
V|_{Z'}$, we get that $c_Z + c_{Z'} \leq c$.

Since $\frank(t) = r_1$, there must be a $z \in V|_Z$ such that $\Rank(t_z)
\geq r_1 - c_Z$.  Since having rank at most $r_1 - c_Z$ is a Zariski-closed
condition, in fact a \emph{generic} $z \in V|_Z$ satisfies $\Rank(t_z) \geq
r_1 - c_Z$; that is, the set of $z \in V_Z$ with $\Rank(t_z) < r_1 - c_Z$
is a proper subvariety, and the set of $z \in V|_Z$ with $\Rank(t_z) \geq r_1
- c_Z$ is a Zariski-open set. Its inverse image in $V$ is thus also Zariski
open. Since the analogous argument holds in $V|_{Z'}$, we find that the set
of $v \in V$ whose projection $z'$ into $\F^{Z'}$ satisfies $\Rank(t'_{z'})
\geq r_2 - c_{Z'}$ is Zariski-open in $V$. Since $V$ is an irreducible affine
variety, any two Zariski open subsets intersect, so there exists a single $v
\in V$ simultaneously satisfying these two conditions, and thus satisfying
$\Rank((t \oplus t')_v) \geq r_1 - c_Z + r_2 - c_{Z'} \geq r_1 + r_2 - c$.
Since this held for any $c$, we have that $\frank(t \oplus t') \geq r_1 + r_2
= \frank(t) + \frank(t')$.
\end{proof}

Note that the latter part of this argument depended on the field being
infinite. However, in all of our results bounding slice rank we could have
been working in an algebraically closed field without loss of generality, so
this is not a significant issue. If there ever were a setting in which one
really needed to consider slice rank or flat rank over a \emph{finite} field
$\F_q$ for which its algebraic closure $\overline{\F}_q$ wouldn't suffice, we
believe that quantitative estimates can be made for the latter half of our
argument and that the result should still hold.

\begin{corollary}
\label{cor:flat-rank-diagonal} The flat rank and slice rank of a diagonal
tensor are full.
\end{corollary}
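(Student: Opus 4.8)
The plan is to realize the diagonal tensor as a direct sum of one-dimensional ``point'' tensors, each of flat rank exactly $1$, and then invoke additivity of flat rank (Theorem~\ref{thm:flat_additive}) to pin down $\frank$, finally squeezing $\slicerank$ between $n$ and $n$.

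First I would fix notation. A diagonal tensor of size $n$ (with nonzero diagonal entries $\lambda_1,\dots,\lambda_n$) is, after relabeling the three index sets so that they are formally disjoint, the tensor $t=\bigoplus_{i=1}^n t_i$, where $t_i$ is the tensor supported on a single triple of basis vectors with value $\lambda_i\neq 0$; ``full'' means $\slicerank(t)=\frank(t)=n$. Relabeling the indices is a change of basis (in fact a permutation of basis vectors) in each of the three factors, which alters neither slice rank nor flat rank, so we may assume this disjointness; this is precisely what makes $t=\bigoplus_i t_i$ a genuine direct sum of tensors in the sense required by Theorem~\ref{thm:flat_additive}.

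Next I would show $\frank(t_i)=1$ for each $i$. On the one hand $t_i\neq 0$, and a tensor of flat rank $0$ would satisfy $\Rank((t_i)_v)\le 0$ for all $v$ in a codimension-$0$ subspace, hence would be identically zero; so $\frank(t_i)\ge 1$. On the other hand, by the observation that $\frank\le\slicerank$ together with the trivial decomposition $t_i(x,y,z)=\lambda_i\,[x=i]\cdot[y=i]\,[z=i]$ (or directly via Observation~\ref{obs:flat_decomp}), we get $\frank(t_i)\le\slicerank(t_i)\le 1$. Applying Theorem~\ref{thm:flat_additive} $n$ times (induction on the number of summands) yields $\frank(t)=\sum_{i=1}^n\frank(t_i)=n$. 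To finish, the observation relating the two notions gives $n=\frank(t)\le\slicerank(t)$, while the explicit $n$-slice decomposition $t(x,y,z)=\sum_{i=1}^n\lambda_i\,[x=i]\cdot[y=i]\,[z=i]$ gives $\slicerank(t)\le n$. Hence $\frank(t)=\slicerank(t)=n$, i.e.\ both are full.

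The real work is already behind us: it lives in Theorem~\ref{thm:flat_additive}, whose proof carries the genericity/Zariski-density argument and the use of an infinite base field. Given that theorem, the corollary is essentially bookkeeping; the only point deserving a moment's care is the reduction to formally disjoint index sets so that $\bigoplus_i t_i$ is honestly a direct sum, but this is free, being effected by a basis change in each factor.
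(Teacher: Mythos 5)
Your proposal is correct and follows essentially the same route as the paper: decompose the diagonal tensor as a direct sum of $n$ nonzero $1\times 1\times 1$ tensors, each of flat rank $1$, and apply additivity of flat rank (Theorem~\ref{thm:flat_additive}), then use $\frank \le \slicerank \le n$ to conclude. The extra care you take with relabeling the index sets and verifying $\frank(t_i)=1$ is fine but the paper treats these points as immediate.
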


When one applies our proof in particular to the case of a diagonal tensor,
one recovers not only Tao's result, but essentially the same proof. In this
way, our proof of additivity generalizes Tao's proof \cite{taoBlog} that a
diagonal tensor has full slice rank.

\begin{proof}
The flat rank of any $1 \times 1 \times 1$ nonzero tensor is clearly 1 (it's
not zero, and it's at most the side length). A diagonal of size $m$ is
precisely the direct sum of $m$ such tensors, so the result follows from
additivity.
\end{proof}

\begin{proposition}
The flat rank and slice rank of the square matrix multiplication tensor
$\langle n, n, n \rangle$ are full.
\end{proposition}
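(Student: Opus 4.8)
The plan is to establish the lower bound $\frank(\langle n,n,n\rangle)\ge n^2$; since the slice rank of a tensor is at most its smallest side length and $\frank(t)\le\slicerank(t)$ always, this will give $\frank(\langle n,n,n\rangle)=\slicerank(\langle n,n,n\rangle)=n^2$, i.e.\ both are full. Throughout I would work over an algebraically closed (hence infinite) field, which is harmless by the remark following Theorem~\ref{thm:flat_additive}, and I would use the cyclic symmetry of matrix multiplication to treat all three modes on equal footing, so it does not matter which factor plays the role of the ``$z$'' factor in the definition of flat rank.

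First I would make the slice structure of the tensor explicit. Realize $\langle n,n,n\rangle$ as the structure tensor of the algebra $M_n(\F)$, i.e.\ the trilinear form $(X,Y,Z)\mapsto\Tr(XYZ)$ on $\F^{n\times n}\times\F^{n\times n}\times\F^{n\times n}$; the precise placement of transposes is immaterial, since transposing a single variable is an invertible change of basis in one factor and flat rank is unchanged by such changes of basis. Fixing a matrix $C$ in the $Z$-slot and reading off the restricted bilinear form $t_C(X,Y)=\Tr(XYC)$ in the matrix-unit bases, one finds that, after grouping the $X$- and $Y$-coordinates by their shared ``middle'' index, $t_C$ is block diagonal with $n$ blocks, each a copy of $C^{T}$. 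Hence $\Rank(t_C)=n\cdot\Rank(C)$. This is the only computation in the argument.

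Next I would fix an arbitrary subspace $V\le\F^{n\times n}$, set $c=\codim V$ and $r=\max_{C\in V}\Rank(C)$, so that $\max_{C\in V}\Rank(t_C)=nr$. By the definition of flat rank it suffices to prove $nr+c\ge n^2$ for every such $V$ and then minimize over $V$. Since every matrix in $V$ has rank at most $r$, a classical theorem of Flanders on linear spaces of bounded-rank matrices (valid over an infinite field; see, e.g., \cite{BCS}) gives $\dim V\le rn$, i.e.\ $c\ge n^2-rn$. Therefore $nr+c\ge nr+(n^2-rn)=n^2$, which is exactly what we need, and so $\frank(\langle n,n,n\rangle)\ge n^2$.

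The step I expect to be the crux is invoking the \emph{sharp} Flanders bound $\dim V\le rn$. The naive approach---Gaussian elimination relative to a maximal-rank element $A\in V$, brought to the form with an $r\times r$ identity block and zeros elsewhere, forcing every $B\in V$ to have vanishing lower-right $(n-r)\times(n-r)$ block---only yields $\dim V\le r(2n-r)$, hence $nr+c\ge n^2+r(r-n)$, which is vacuous because $r\le n$. Flanders' additional input, that one may moreover arrange a common zero block-row or block-column among the matrices of $V$, is precisely what closes the gap. The extremal subspace $\{A\in\F^{n\times n}:A_{ij}=0\text{ for all }i>r\}$, which has codimension $n^2-rn$, maximal rank $r$, and achieves $nr+c=n^2$ exactly, shows that this bound on $\frank$ cannot be improved; together with $\frank\le\slicerank\le n^2$ it pins both ranks down to $n^2$.
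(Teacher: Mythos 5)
Your proof is correct and is essentially the paper's own argument: both rest on the observation that restricting $\langle n,n,n\rangle$ at a fixed matrix in the third slot yields $n$ block-diagonal copies of that matrix (so the restricted rank is $n$ times the matrix rank), followed by an appeal to Flanders' theorem that a subspace of $n\times n$ matrices of rank at most $r$ has dimension at most $rn$. The only difference is presentational---you derive the inequality $nr+c\ge n^2$ directly, while the paper argues by contradiction---and your extra remarks (the insufficiency of the naive $r(2n-r)$ bound, the extremal subspace) are accurate but not needed.
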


\begin{proof}
Suppose that $\frank(\langle n,n,n \rangle) < n^2$. So there is a codimension
$c$ subspace $V \leq M_{n \times n}$ such that the restriction of $ \langle
n,n,n \rangle$ to each vector $v \in V$ has matrix rank at most $ n^2 - c - 1$.
To see what these restricted matrices look like, let us view the matrix
multiplication in a particular form. Namely, writing out the multiplication
table in a particular order, we get the following:
\[
\begin{array}{c|cccc|ccc|c|ccc}
 & E_{11} & E_{12} & \dotsb & E_{1n} & E_{21} & \dotsb & E_{2n} & \dotsb & E_{n1} & \dotsb & E_{nn} \\ \hline
E_{11} & E_{11} & E_{12} & \dotsb & E_{1n} & 0 & \dotsb & 0 & \dotsb & 0 & \dotsb & 0 \\
E_{21} & E_{21} & E_{22} & \dotsb & E_{2n} & 0 & \dotsb & 0 & \dotsb & 0 & \dotsb & 0 \\
\vdots &  \vdots & \vdots & \ddots & \vdots & \vdots & & \vdots & & \vdots & & \vdots \\
E_{n1} & E_{n1} & E_{n2} & \dotsb & E_{nn} &0 & \dotsb & 0 & \dotsb & 0 & \dotsb & 0 \\ \hline
E_{12} & & & & & E_{11} & \dotsb & E_{1n} \\
E_{22} & & & & & E_{21} & \dotsb & E_{2n} \\
\vdots & & & & & \vdots & \ddots & \vdots \\
E_{n2} & & & & & E_{n1} & \dotsb & E_{nn} \\ \hline
\vdots & \\ \hline
E_{1n} & & & & & & & & & E_{11} & \dotsb & E_{1n} \\
\vdots  & & & & & & & & & \vdots & \ddots & \vdots \\
E_{nn} & & & & & & & & & E_{n1} & \dotsb & E_{nn} \\
\end{array}
\]
This means that the set of restrictions of $\langle n,n,n \rangle$ to $v \in
V$ is equal to
\[
\left\{
\left(\begin{array}{cccc}
X \\
 & X \\
  & & \ddots \\
  & & & X
\end{array}\right)
 :
 X \in V
\right\}.
\]
Let us denote the preceding matrix by $X^{\oplus n}$; it is clear that
$\Rank(X^{\oplus n}) = n \Rank(X)$. By assumption, we have $\Rank(X^{\oplus
n}) \leq n^2 - c - 1$ for all $X \in V$. Equivalently, $\Rank(X) \leq n -
(c/n) - (1/n)$ for all $X \in V$. However, a theorem of Flanders
\cite{flanders} says that a subspace of $n \times n$ matrices in which every
matrix has rank at most $r$ must have dimension at most $rn$. So we must have
$\dim V \leq n (n - (c/n) - (1/n)) = n^2 - c - 1$, contradicting our
assumption that $\dim V = n^2 - c$.
\end{proof}

\begin{corollary} \label{cor:semisimple}
The flat rank and slice rank of a direct sum of matrix multiplication tensors
are full. In particular, the flat rank and slice rank of any semisimple group
algebra $\F[G]$ are equal to $|G|$.
\end{corollary}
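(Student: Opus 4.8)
The plan is to bootstrap from two facts already established: flat rank is additive under direct sum (Theorem~\ref{thm:flat_additive}), and each square matrix multiplication tensor $\langle n,n,n\rangle$ has full flat rank $n^2$ (the preceding Proposition). Writing $t=\bigoplus_{i=1}^k\langle n_i,n_i,n_i\rangle$, each of the three index sets of $t$ has size $\sum_i n_i^2$, so ``full'' here means $\frank(t)=\slicerank(t)=\sum_i n_i^2$. Additivity gives $\frank(t)=\sum_i\frank(\langle n_i,n_i,n_i\rangle)=\sum_i n_i^2$; since $\frank(t)\le\slicerank(t)$ (the Observation) and slice rank never exceeds the side length, this forces $\slicerank(t)=\sum_i n_i^2$ as well, proving the first assertion.

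For the ``in particular'' clause I would argue as follows. Suppose $\F[G]$ is semisimple. By the remark following Theorem~\ref{thm:flat_additive} we may assume $\F$ is algebraically closed (extending scalars to $\overline{\F}$ can only decrease flat and slice rank, so it suffices to prove fullness there). Then Wedderburn's theorem gives an $\F$-algebra isomorphism $\F[G]\cong\bigoplus_i M_{d_i}(\F)$ with $\sum_i d_i^2=|G|$, and in the basis of $\bigoplus_i M_{d_i}(\F)$ consisting of all matrix units the multiplication tensor $\mult{\F[G]}$ is exactly $\bigoplus_i\langle d_i,d_i,d_i\rangle$ (products of elements lying in distinct blocks vanish, and within the $i$-th block the multiplication tensor in the matrix-unit basis is by definition the $d_i\times d_i$ matrix multiplication tensor). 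The matrix-unit basis and the group-element basis are two bases of the same $|G|$-dimensional vector space, hence related by an element of $\GL_{|G|}(\F)$ in each of the three tensor factors, and both flat rank and slice rank are invariant under this $\GL\times\GL\times\GL$ action. Applying the first assertion to $\bigoplus_i\langle d_i,d_i,d_i\rangle$ then yields $\frank(\mult{\F[G]})=\slicerank(\mult{\F[G]})=\sum_i d_i^2=|G|$.

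I do not expect a real obstacle here: the corollary is essentially a packaging of the additivity theorem with the matrix-multiplication computation. The only steps needing a little care are the reduction to an algebraically closed field---so that the Wedderburn components are honest full matrix algebras rather than matrix algebras over division rings---and the verification that passing from the group-element basis to the matrix-unit basis is a legitimate change of basis under which flat and slice rank are unchanged; both are immediate from tools already developed above.
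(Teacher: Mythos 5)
Your proof is correct and follows exactly the route the paper intends: the corollary is stated without proof precisely because it is the combination of Theorem~\ref{thm:flat_additive} (additivity of flat rank), the preceding proposition on $\langle n,n,n\rangle$, and the Wedderburn decomposition of a semisimple group algebra, with flat/slice rank invariant under the change from the group-element basis to the matrix-unit basis. Your added care about passing to the algebraic closure (so the Wedderburn blocks are genuine matrix algebras, and the infinite-field hypothesis in the additivity proof is satisfied) matches the paper's own remark after Theorem~\ref{thm:flat_additive} and is handled correctly.
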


This tells us that to get any nontrivial upper bounds on
$\slicerank(\mult{G})$, one has to work over a field whose characteristic
divides $|G|$.

\mysubsection{The slice rank of cyclic groups is full} \label{sec:slice_cyclic}

\begin{theorem} \label{thm:cyclic}
For any cyclic group $G = \Z / n \Z$, $\frank(\mult{G}) = \slicerank(\mult{G}) = |G|$ in any characteristic.
\end{theorem}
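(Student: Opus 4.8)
The plan is to reduce everything to a single clean inequality: $\frank(\mult{G}) \ge |G|$. Indeed, $\slicerank(\mult{G}) \le |G|$ always (it is at most the side length), and $\frank(\mult{G}) \le \slicerank(\mult{G})$, so this inequality forces $\frank(\mult{G}) = \slicerank(\mult{G}) = |G|$. Since slice rank does not increase under a field extension, it is enough to prove $\frank(\mult{G}) \ge |G|$ over an \emph{infinite} field of the prescribed characteristic (e.g.\ $\overline{\F_p}$ or $\Q$); this is harmless and in keeping with the paper's general stance of working over algebraically closed fields when bounding slice rank. So I assume from now on that $\F$ is infinite, and write $n = |G|$.

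The heart of the argument is an exact formula for the rank of a slice. Fix a generator $g$ and use the basis $1, g, \dots, g^{n-1}$, identifying $\F[G]$ with $R := \F[x]/(x^n-1)$ via $g^k \leftrightarrow x^k$. For $v = \sum_k v_k x^k \in R$, identified with $(v_k) \in \F^Z$, the slice $(\mult{G})_v$ is the $n \times n$ matrix with $(i,j)$ entry $v_{(i+j)\bmod n}$. Viewing it as a linear endomorphism of $R$, a direct check shows $(\mult{G})_v = \mu_v \circ \iota$, where $\mu_v$ is multiplication by $v$ in $R$ and $\iota$ is the algebra automorphism induced by the group automorphism $g \mapsto g^{-1}$ (so $\iota(x^k) = x^{-k}$). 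Since $\iota$ is invertible, $\Rank((\mult{G})_v) = \dim_\F(vR)$; and because $R$ is a quotient of a PID, $vR = \gcd(v(x),\, x^n-1)\,R$, whose $\F$-dimension is $n - \deg\gcd(v(x),\, x^n-1)$. Hence
\[
\Rank\big((\mult{G})_v\big) = n - \deg\gcd\big(v(x),\, x^n-1\big) \qquad \text{for every } v \in \F^Z.
\]

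It remains to deduce the flat-rank bound. Let $V \le \F^Z$ be a subspace of codimension $c$ (the case $c = n$ being trivial). I claim some $v \in V$ satisfies $\deg\gcd(v(x), x^n-1) \le c$; by the displayed formula such a $v$ has $\Rank((\mult{G})_v) \ge n-c$, so $\frank(\mult{G}) \ge n$, finishing the proof. Suppose not. Then every $v \in V$ is divisible by $h_v := \gcd(v(x), x^n-1)$, a monic divisor of $x^n-1$ of degree $\ge c+1$. For each such monic divisor $h$, the set $W_h = \{w \in \F^Z : h(x) \mid w(x)\}$ is a subspace of dimension $n - \deg h \le n-c-1$, so $V \cap W_h$ is a \emph{proper} subspace of $V$; and $V \subseteq \bigcup_h W_h$, the union being over the \emph{finitely many} monic divisors of $x^n-1$ of degree $\ge c+1$. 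This contradicts the fact that a vector space over an infinite field is not a union of finitely many proper subspaces. (Over $\overline{\F}$, where $x^n-1$ splits into linear factors, one may take $\deg h = c+1$ exactly, making the family $W_h$ even more explicit.)

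The step I expect to be the main obstacle is getting the rank-of-slices formula to hold \emph{uniformly in the characteristic}: when the characteristic of $\F$ divides $n$, the ring $R = \F[x]/(x^n-1)$ is no longer semisimple, so the familiar trick of diagonalizing $\mult{G}$ by the discrete Fourier transform (which would reduce to the diagonal-tensor case, Corollary~\ref{cor:flat-rank-diagonal}) is unavailable — and this is precisely the case of interest for the rest of the paper. The $\gcd$-formula above is what replaces it; it is obtained purely from the structure of $R$ as a quotient of a PID and is insensitive to whether $R$ is semisimple. A minor loose end is that the combinatorial step genuinely uses $|\F| = \infty$, so for the flat-rank statement over a small finite field one passes to an extension, which does not affect the slice-rank conclusion.
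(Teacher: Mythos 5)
Your proof is correct, but it takes a genuinely different route from the paper's. The paper first decomposes $\F[\Z/n\Z]\cong \F[\Z/p^e\Z]\otimes\F^{\oplus m}$ (for $n=p^em$ in characteristic $p$), handles the semisimple factor via Corollary~\ref{cor:semisimple} (which ultimately rests on Flanders' theorem on spaces of bounded-rank matrices), treats $\F_p[\Z/p^e\Z]\cong\F_p[x]/(x^{p^e})$ by an explicit ordering of the basis of nilpotents (Lemma~\ref{lem:cyclic_flat}), and glues the two factors with additivity of flat rank (Theorem~\ref{thm:flat_additive}). You instead prove a single uniform identity: identifying $\F[G]$ with $R=\F[x]/(x^n-1)$, the slice $(\mult{G})_v$ is $\mu_v\circ\iota$ with $\iota$ the inversion automorphism, so $\Rank((\mult{G})_v)=\dim vR=n-\deg\gcd(v,x^n-1)$ --- I checked this and it is right --- and then you finish with a covering-by-finitely-many-proper-subspaces argument. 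Your version is more self-contained (no Flanders, no CRT splitting, no additivity theorem) and the exact rank formula for every slice is a pleasant bonus that the paper's proof does not yield. What the paper's route buys is that it exercises general machinery (additivity of flat rank, full flat rank of matrix multiplication) that is reused and of independent interest, and its Lemma~\ref{lem:cyclic_flat} works over \emph{any} field for prime-power $n$. Both arguments need an infinite field somewhere for general $n$ (the paper through Theorem~\ref{thm:flat_additive}, you through the covering lemma), and your handling of that point --- pass to $\overline{\F}$, which cannot decrease slice rank --- matches the paper's stated conventions; note that in your argument, when $n=p^r$ in characteristic $p$ the bad sets $W_h$ form a single chain with maximal member $W_{(x-1)^{c+1}}$, so a pure dimension count suffices there and infinitude of $\F$ is not needed, exactly paralleling Lemma~\ref{lem:cyclic_flat}.
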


While Proposition~\ref{prop:cyclicmatching} showed that our bound on the
slice rank of $\Z/{p^k}\Z$ from Proposition~\ref{prop:augVSplcs} (see
Example~\ref{ex:homocyclic}) wasn't off by more than a factor of 2,
Theorem~\ref{thm:cyclic} shows that that bound was in fact exactly correct
for cyclic $p$-groups.

\begin{lemma} \label{lem:cyclic_flat}
For $G = \Z / p^r \Z$ ($p$ prime), the flat rank of $\F_p[G]$ is equal to
$|G|$.
\end{lemma}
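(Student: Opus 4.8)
The plan is to reduce $\mult{G}$ to a concrete ``truncated convolution'' tensor by a change of basis, compute the rank of every one of its slices exactly, and then finish with a one-line dimension count.

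First I would exploit the fact that flat rank is defined purely in terms of the ranks of the restrictions $t_v$, hence is invariant under $\GL$ acting on each tensor factor. So I may replace the group-element basis $\{g^i : 0 \le i < n\}$ of $\F_p[G]$ (here $g$ is a generator and $n = p^r$) by the basis $\{(g-1)^i : 0 \le i < n\}$, which is a genuine basis since $(g-1)^i$ has leading term $g^i$. In characteristic $p$ we have $g^{p^r}-1 = (g-1)^{p^r}$, so with $y = g-1$ this identifies $\F_p[G] \cong \F_p[y]/(y^n)$, in which $y^i y^j = y^{i+j}$, interpreted as $0$ once $i+j \ge n$. Thus $\mult{G}$ is equivalent to the tensor $T$ on $\F_p^n$ given by $T(i,j,k) = 1$ iff $i+j = k$ (all indices in $\{0,\dots,n-1\}$), and it suffices to show $\frank(T) = n$.

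Next I would compute, for $v = (v_0,\dots,v_{n-1}) \in \F_p^n$, the rank of the slice matrix $T_v$, whose $(i,j)$-entry is $v_{i+j}$ when $i+j \le n-1$ and $0$ otherwise. The claim is that $\Rank(T_v) = 1 + d(v)$, where $d(v) = \max\{k : v_k \ne 0\}$ (with the convention $d(0) = -1$): columns $d(v)+1,\dots,n-1$ of $T_v$ are zero, while for $0 \le j \le d(v)$ the $j$th column, read from the bottom, has its last nonzero entry equal to $v_{d(v)}$ sitting in row $d(v)-j$, so these columns are in echelon form and hence independent. This is the only computation in the argument and it is short.

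Finally, given an arbitrary subspace $V \le \F_p^n$ of codimension $c$, so $\dim V = n-c$, I would observe that if every $v \in V$ had $d(v) \le n-c-2$, then $V$ would be contained in the span of the first $n-c-1$ coordinate vectors, which is impossible by dimension count; hence some $v \in V$ has $d(v) \ge n-c-1$ and so $\Rank(T_v) \ge n-c$. Therefore $c + \max_{v \in V} \Rank(T_v) \ge n$ no matter how $V$ is chosen, which gives $\frank(\mult{G}) \ge n$, and the reverse inequality $\frank(\mult{G}) \le n$ is immediate since $n$ is the side length. I do not expect any genuine obstacle: the only things to be careful about are the bookkeeping around $d(0) = -1$ and the degenerate cases $c = 0$ and $c = n$. (The restriction to cyclic $p$-groups, rather than general cyclic groups, is exactly what makes $\F_p[G]$ a truncated polynomial ring; the general statement of Theorem~\ref{thm:cyclic} then follows by combining this lemma with Corollary~\ref{cor:semisimple} for the prime-to-$p$ part via the Chinese Remainder Theorem.)
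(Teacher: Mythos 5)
Your proposal is correct and follows essentially the same route as the paper: identify $\F_p[G]$ with $\F_p[x]/(x^{p^r})$ via $x=g-1$, observe that the slice of the multiplication tensor attached to a vector whose top nonzero coordinate sits at index $d$ has rank exactly $d+1$ (the paper phrases this via the matrices $M_d$ after reordering rows, you via the echelon columns of the truncated Hankel matrix), and then use the same dimension count to find, in any codimension-$c$ subspace, a vector whose slice has rank at least $p^r-c$. The bookkeeping differs only cosmetically, and your closing remark about deducing Theorem~\ref{thm:cyclic} from Corollary~\ref{cor:semisimple} and additivity matches the paper as well.
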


\begin{proof}
First, $\F_p[G] \cong \F_p[x] / (x^{p^r})$ (let $g$ be a generator of $G$;
then the element $x=g-1 \in \F_p[G]$ generates the ring, and satisfies
$x^{p^r} = 0$). The algebra multiplication table in the basis $\{1, x, x^2,
x^3, \dotsc, x^{p^r - 1}\}$ has the following form (note the order chosen on the
rows!):
\[
\begin{array}{c|ccccc}
 & 1 & x & x^2 & \dotsc & x^{p^r - 1} \\ \hline
x^{p^r -1} & x^{p^r-1} & 0 & 0 & \dotsc & 0 \\
x^{p^r - 2} & x^{p^r - 2} & x^{p^r - 1} & 0 & \dotsc & 0 \\
 \vdots & \vdots & \vdots & \vdots & & \vdots \\
 x^2 & x^2 & x^3 & x^4 & \dotsc & 0 \\
 x & x & x^2 & x^3 & \dotsc & 0 \\
 1 & 1 & x & x^2 & \dotsc & x^{p^r - 1} \\
  \end{array}
\]
In particular, each the $i$-th diagonal below the main one (with the main one
being $i=0$) has all of its entries equal to $x^{p^r - 1 - i}$. The
restriction of this tensor to any vector in the $z$-slice thus lies in the
linear span of the matrices $M_d$ defined by $M_d= \sum_i | i \rangle \langle
i + d - p^r |$ for $d=1, \dotsc, p^r$, with the understanding that $\langle m
| = 0$ if $m \leq 0$. Since such matrices are always of the form a
lower-triangular matrix surrounded by blocks of zeros, the rank of $\sum_d
a_d M_d$ is equal to the largest $d$ with $a_d \neq 0$.

Now, let $V$ be a codimension $c$ subspace of the $z$'s; so $\dim V = p^r -
c$. Then $V$ cannot be entirely contained in the span of $M_1, \dotsc,
M_{p^r-c-1}$, by dimension. Thus there is a matrix in $V$ of the form $\sum_d
a_d M_d$ with $a_d \neq 0$ for some $d \geq p^r - c$, which thus has rank at
least $p^r-c$. So the smallest upper bound possible on the flat rank is
$p^r$, and thus $\frank(\F_p[G]) = p^r = |G|$.
\end{proof}

\begin{proof}[Proof of Theorem~\ref{thm:cyclic}]
In characteristic coprime to $n$ (including characteristic zero), $\F[G]$ is
semisimple (in fact, $\F[G] \cong \F^{\oplus |G|}$), so the result follows
from Corollary~\ref{cor:semisimple}.

In characteristic $p$ with $p | n$, write $n = p^e m$ where $m$ is coprime to
$p$. Then $\Z / n \Z \cong \Z / p^e \Z \times \Z / m \Z$, so $\F[\Z / n \Z]
\cong \F[\Z / p^e \Z] \otimes \F[\Z / m \Z]$. Since $\text{char}(F)=p$ is
coprime to $m$, $\F[\Z / m \Z] \cong \F^{\oplus m}$, so we get that $\F[\Z /
n \Z]$ is isomorphic to $\F[\Z / p^e \Z]^{\oplus m}$. Since the flat rank of
$\F[\Z / p^e \Z]$ is $p^e$ (Lemma~\ref{lem:cyclic_flat}) and flat rank is
additive (Theorem~\ref{thm:flat_additive}), we get that the flat rank of
$\F[\Z / n \Z]$ is equal to $p^e m = n$.
\end{proof}

\newcommand{\etalchar}[1]{$^{#1}$}
  \newcommand{\conference}[4]{{#1} '#3: #4
  #2}\newcommand{\FOCS}[2]{\conference{FOCS}{Annual {IEEE} {S}ymposium on
  {F}oundations of {C}omputer
  {S}cience}{#1}{#2}}\newcommand{\STOC}[2]{\conference{STOC}{Annual {ACM}
  {S}ymposium on {T}heory of
  {C}omputing}{#1}{#2}}\newcommand{\MFCS}[2]{\conference{MFCS}{{S}ymposium on
  {M}athematical {F}oundations of {C}omputer
  {S}cience}{#1}{#2}}\newcommand{\SODA}[2]{\conference{SODA}{{ACM}--{SIAM}
  {S}ymposium on {D}iscrete
  {A}lgorithms}{#1}{#2}}\newcommand{\ICALP}[2]{\conference{ICALP}{{I}nternational
  {C}olloquium on {A}utomata, {L}anguages and
  {P}rogramming}{#1}{#2}}\newcommand{\CCC}[2]{\conference{CCC}{{IEEE}
  {C}onference on {C}omputational
  {C}omplexity}{#1}{#2}}\newcommand{\STACS}[2]{\conference{STACS}{Annual
  {S}ymposium on {T}heoretical {A}spects of {C}omputer
  {S}cience}{#1}{#2}}\newcommand{\journal}[1]{preliminary version of
  \cite{#1}}\newcommand{\original}[1]{originally appeared as
  \cite{#1}}\newcommand{\available}[1]{also available as
  \cite{#1}}\newcommand\arXiv[1]{arXiv:\href{http://arXiv.org/abs/#1}{#1}}\newcommand{\doi}[1]{\href{http://dx.doi.org/#1}{doi:#1}}

\end{document}